\documentclass[a4paper]{amsart}
\oddsidemargin 0mm
\evensidemargin 0mm
\topmargin 10mm
\textwidth 160mm
\textheight 230mm
\tolerance=9999
\usepackage[latin1]{inputenc}
\usepackage{amssymb}
\usepackage{amsmath}
\usepackage{mathrsfs}
\usepackage{eufrak}
\usepackage{amsthm}
\usepackage{amsfonts}
\usepackage{textcomp}
\usepackage{graphicx}
\usepackage[pdftex]{color}
\usepackage{paralist}
\usepackage[shortlabels]{enumitem}
\usepackage{hyperref}
\usepackage{comment}
\usepackage[arrow, matrix, curve]{xy}

\newtheorem*{corollary*}{Corollary}
\newtheorem{theorem}{Theorem}[section]

\newtheorem{corollary}[theorem]{Corollary}
\newtheorem{lemma}[theorem]{Lemma}
\newtheorem{proposition}[theorem]{Proposition}

\newtheorem*{claim*}{Claim}

\theoremstyle{definition}

\newtheorem*{maintheorem}{Maintheorem}
\newtheorem*{theorem }{Theorem}
\newtheorem{remark}[theorem]{Remark}
\newtheorem{example}[theorem]{Example}

\theoremstyle{remark}

\numberwithin{equation}{theorem}

\makeatletter
\renewcommand*\env@matrix[1][\
arraystretch]{%
  \edef\arraystretch{#1}%
  \hskip -\arraycolsep
  \let\@ifnextchar\new@ifnextchar
  \array{*\c@MaxMatrixCols c}}
\makeatother


\begin{document}

\title{Auslander-Gorenstein algebras, standardly stratified algebras and dominant dimensions}
\date{\today}

\subjclass[2010]{Primary 16G10, 16E10}

\keywords{Gorenstein dimension, Schur algebra, dominant dimension, quasi-hereditary algebras, standardly stratified algebras}

\author{Ren\'{e} Marczinzik}
\address{Institute of algebra and number theory, University of Stuttgart, Pfaffenwaldring 57, 70569 Stuttgart, Germany}
\email{marczire@mathematik.uni-stuttgart.de}

\begin{abstract}
We give new properties of algebras with finite Gorenstein dimension coinciding with the dominant dimension $\geq 2$, which are called Auslander-Gorenstein algebras in the recent work of Iyama and Solberg, see \cite{IyaSol}. In particular, when those algebras are standardly stratified, we give criteria when the category of (properly) (co)standardly filtered modules has a nice homological description using tools from the theory of dominant dimensions and Gorenstein homological algebra. We give some examples of standardly stratified algebras having dominant dimension equal to the Gorenstein dimension, including examples having an arbitrary natural number as Gorenstein dimension and blocks of finite representation-type of Schur algebras.
\end{abstract}

\maketitle
\section*{Introduction}
Following the preprint \cite{IyaSol}, we define \emph{Auslander-Gorenstein algebras} to be algebras with Gorenstein dimension $g$ equal to the dominant dimension $d \geq 2$. Those algebras generalize higher Auslander algebras in the sense of Iyama, see \cite{Iya} and algebras of this kind were first considered by Auslander and Solberg in \cite{AS}. 
We show that the Gorenstein homological algebra of Auslander-Gorenstein algebras is directely related to the theory of dominant dimension of modules. Let $Dom_i$ ($Codom_i$) denote the full subcategory of modules having dominant dimension (codominant dimension) at least $i$ and $GProj_j$ ($GInj_j$) the full subcategory of modules having Gorenstein projective (Gorenstein injective) dimension at least $j$.
\begin{theorem }
(see \ref{domgorproequ})
Let $A$ be a Auslander-Gorenstein algebra of Gorenstein dimension $r$. 
\begin{enumerate}
\item $Dom_{r-j}=GProj_j$ for all $j=0,1,..,r$.
\item $Codom_{r-j}=GInj_j$ for all $j=0,1,..,r$.
\end{enumerate}
\end{theorem }
The previous theorem can be seen as a generalisation of theorem 4.7. in \cite{IyaSol} in our setting, as explained in the main text.
Restricting furthermore to Auslander-Gorenstein algebras that are standardly stratified, we describe when the characteristic tilting module has an especially nice form and how the associated categories of modules filtered by standard or proper standard modules look like. 
In a special case, we can give a complete homological description.
This gives a new connection for a large class of algebras between the three subjects of representation theory of finite dimensional algebras consisting of standardly stratified algebras, dominant dimension and Gorenstein homological algebra.
We refer to the main text for the relevant definitions.
\begin{maintheorem}
(see \ref{dualityhigh})
Assume that $A$ is a properly stratified Auslander-Gorenstein algebra with a simple-preserving duality such that the characteristic tilting module coincides with the characteristic cotilting module. Then the Gorenstein dimension of $A$ equals $2m$ for some $m \geq 1$, when the characteristic tilting module has projective dimension $m$ and $F(\bar{\Delta})= Dom_{m}=GProj_m, F(\Delta)=Proj_m,F(\bar{\nabla})= Codom_{m}=GInj_{m}, F(\nabla)=Inj_m$.
\end{maintheorem}
We mention that algebras as in the previous theorem include quasi-hereditary algebras such as representation-finite blocks of Schur algebras and the Auslander algebras of the algebras $K[x]/(x^n)$ for some $n \geq 2$.
Note that the previous theorem applies to all quasi-hereditary higher Auslander algebras with a simple-preserving duality, since in this case the characteristic tilting module automatically coincides with the characteristic cotilting module.
We give examples of Auslander-Gorenstein algebras of arbitrary Gorenstein dimension $g \geq 2$ that are standardly stratified and also show that in general not every higher Auslander algebra is quasi-hereditary. Our methods also apply to give a formula for the relative Auslander-Reiten translate in the category $F(\overline{\Delta})$ for algebras as in the previous theorem.
In \cite{FanKoe3}, the authors defined algebras to be in class $\mathcal{A}$ iff they are quasi-hereditary with a simple-preserving duality and dominant dimension at least two. In the last section we prove the following:
\begin{theorem } (see \ref{classiA} and \ref{schuralgex}.)
Let $K$ be an algebraically closed field and $A$ a basic, representation-finite algebra over $K$, which is in class $\mathcal{A}$. Then $A$ is isomorphic to the Auslander algebra of $K[x]/(x^3)$ or to a representation-finite block of a Schur algebra and has global dimension equal to its dominant dimension equal to $2n-2$, when $A$ has $n \geq 2$ simple modules. Furthermore, one has $F(\Delta)=Dom_{n-1}$ for such algebras with dominant dimension $2n-2$.
\end{theorem }

I thank Volodymyr Mazorchuk for providing a proof of \ref{mazorchuktheo}, {{\O}}yvind Solberg for sending a preprint of \cite{IyaSol} and Steffen Koenig for useful comments.
\section{Preliminaries}
Let an algebra always be a finite dimensional, connected and non-semisimple algebra over a field $K$ and a module over such an algebra is always a finite dimensional right module, unless otherwise stated. $D=Hom_K(-,K)$ denotes the $K$-vector space duality for a given finite dimensional algebra $A$ and $J$ the Jacobson radical. A module is called \emph{basic}, in case it has no direct summand of the form $N^2$ for some indecomposable module $N$. The \emph{basic version} of a module $M$ is a basic module $L$ with $add(L)=add(M)$. 
We define the \emph{dominant dimension} domdim($M$) of a module $M$ with a minimal injective resolution $(I_i): 0 \rightarrow M \rightarrow I_0 \rightarrow I_1 \rightarrow ...$  as:
domdim($M$):=$\sup \{ n | I_i $ is projective for $i=0,1,...,n \}$+1, if $I_0$ is projective, and domdim($M$):=0, if $I_0$ is not projective. \newline
The \emph{codominant dimension} of a module $M$ is defined as the dominant dimension of the module $D(M)$. The dominant dimension of a finite dimensional algebra is defined as the dominant dimension of the regular module $A_A$ and the codominant dimension is the codominant dimension of the module $D(_AA)$.
For $1 \leq i \leq \infty$ $Dom_i(A)$ (resp. $Codom_i(A)$) denotes the full subcategory of mod-$A$ constisting of modules with dominant dimension (resp. codominant dimension) larger than or equal to $i$. For a subcategory $C$ of $mod-A$ and $k \in \{1,2,..., \infty \}$ we denote by $C^{\perp k}$ the subcategory $\{ Y \in mod-A | Ext^{i}(C,Y)=0$ for $i=1,2,...,k \}$ and write $X^{\perp k}$ instead of $add(X)^{\perp k}$ in case $C=add(X)$. $^{\perp k}C$ is defined similarly and we often write  $C^{\perp}$ instead of $C^{\perp \infty}$.
We will use the following theorem of Mueller to calculate dominant dimensions, see \cite{Mue}:
\begin{theorem}\label{Mueller}
Let $A$ be an algebra and $M$ a generator-cogenerator of mod-$A$. Then the dominant dimension of the algebra $B=End_A(M)$ equals $domdim(B)= \inf \{ i \geq 1 | Ext^{i}(M,M) \neq 0 \} +1$.
\end{theorem}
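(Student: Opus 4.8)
The plan is to transfer the computation of the dominant dimension of $B$, which is a statement about the minimal injective coresolution of the regular module $B_B$, into a statement about the self-extensions of $M$, by means of the functor $F=\Hom_A(M,-)\colon \mod A\to\mod B$ together with its left adjoint $G=-\otimes_B M\colon \mod B\to\mod A$. Two structural facts attached to a generator--cogenerator $M$ will be used throughout. First, by projectivization $F$ restricts to an equivalence $\add M\xrightarrow{\ \sim\ }\proj B$ with $F(M)=B_B$, and $G$ is a quasi-inverse on this subcategory, so that $M$ is reflexive and $A=\End_B(M)$. Second, since $M$ is in particular a cogenerator, every injective $A$-module already lies in $\add M$, and under the equivalence above the injective $A$-modules correspond exactly to the projective--injective $B$-modules, i.e.\ these are the objects of $\add F(DA)$; this is the Morita--Tachikawa correspondence.

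First I would fix the minimal injective coresolution $0\to M\to I^0\to I^1\to\cdots$ of $M$ in $\mod A$. As $M$ is a cogenerator, every term $I^j$ lies in $\add M$, so applying the left exact functor $F$ yields a complex $0\to C^0\to C^1\to\cdots$ of \emph{projective--injective} $B$-modules, where $C^j=F(I^j)$, together with a canonical isomorphism $B_B\cong\ker(C^0\to C^1)$. Because $I^\bullet$ is an injective resolution of $M$, this complex equals $\Hom_A(M,I^\bullet)=R\Hom_A(M,M)$, so its cohomology is $H^0=B$ and $H^j=\Ext^j_A(M,M)$ for $j\ge 1$. Setting $n=\inf\{i\ge 1\mid \Ext^i_A(M,M)\neq 0\}$, the augmented complex $0\to B\to C^0\to C^1\to\cdots$ is therefore exact at $B,C^0,C^1,\dots,C^{n-1}$ and first fails exactness at $C^n$.

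Next I would read off the dominant dimension from this information. For the lower bound I would invoke the minimality-free characterization that the dominant dimension of a $B$-module is at least $k$ precisely when it admits an exact sequence $0\to B\to X^0\to\cdots\to X^{k-1}$ with all $X^i$ projective--injective, exact at $B,X^0,\dots,X^{k-2}$; truncating $C^\bullet$ supplies such a sequence with $k=n+1$, giving $\operatorname{domdim}(B)\ge n+1$. For the matching upper bound, assuming $n<\infty$, I would apply $G=-\otimes_B M$ to the minimal injective coresolution of $B_B$: its terms $J^0,\dots,J^{n+1}$ are projective, hence flat, so $G$ preserves exactness of this initial segment and returns the corresponding injective $A$-modules, yielding an injective coresolution of $M$ that is exact one step beyond the break of $C^\bullet$ and hence forces $\Ext^n_A(M,M)=0$, contradicting the choice of $n$. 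Thus $\operatorname{domdim}(B)=n+1$, which is exactly Mueller's formula; the case $n=\infty$, where $M$ has no higher self-extensions, corresponds to infinite dominant dimension.

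The inputs that are genuinely routine are the equivalence $\add M\simeq\proj B$ and the identification $H^\bullet(C^\bullet)=\Ext^\bullet_A(M,M)$. I expect the main obstacle to be the precise control of the pair $F,G$ in the \emph{upper}-bound step: one must know that $G$ is exact on the projective terms and that $GF\cong\mathrm{id}$ on the relevant subcategory, so that a projective--injective coresolution of $B$ genuinely pulls back to an injective coresolution of $M$ without losing exactness at the decisive degree. This reflexivity, together with the identification of the projective--injective $B$-modules with $F(\inj A)$, is where the cogenerator hypothesis and the double-centralizer property $A=\End_B(M)$ are indispensable; the remaining care is purely the off-by-one bookkeeping between the vanishing range of $\Ext$ and the index conventions in the definition of dominant dimension.
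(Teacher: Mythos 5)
The paper itself gives no proof of this theorem --- it simply cites Mueller's paper [Mue] --- so your proposal must be measured against the classical argument, whose overall shape you have reproduced correctly. The lower bound half is sound: since $M$ is a cogenerator, the terms of the minimal injective coresolution $0\to M\to I^0\to I^1\to\cdots$ lie in $\add M$, so applying $F=\Hom_A(M,-)$ gives an exact sequence $0\to B\to F(I^0)\to\cdots\to F(I^{n-1})\to F(I^n)$ whose terms are projective-injective $B$-modules (Morita--Tachikawa), and the minimality-free characterization of dominant dimension then yields $\operatorname{domdim}(B)\ge n+1$ for $n=\inf\{i\ge 1\mid \Ext_A^i(M,M)\neq 0\}$.

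The genuine gap is in the upper bound, exactly at the step you yourself flagged as the main obstacle. You assert that because the terms $J^0,\dots,J^{n+1}$ of the minimal injective coresolution of $B$ are projective, ``hence flat,'' the functor $G=-\otimes_B M$ preserves exactness of that initial segment. This inference is invalid: flatness of the \emph{terms} of an exact sequence says nothing about exactness after tensoring, since the obstruction is $\operatorname{Tor}_1^B(C,M)$ for the cosyzygies $C$, which need not be flat. A minimal counterexample: for $B=KQ$ with $Q\colon 1\to 2$, the sequence $0\to e_2B\to e_1B$ is exact with projective terms, yet tensoring with the simple left $B$-module concentrated at vertex $2$ gives $K\to 0$, which is not exact. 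What rescues your argument is a property you never invoke: since $M$ is a \emph{generator}, $A_A$ is a direct summand of $M_A$, and the corresponding idempotent $e\in B$ gives ${}_BM\cong Be$ as left $B$-modules; hence ${}_BM$ is projective, $G\cong (-)e$ is an exact functor, and your upper-bound step becomes legitimate (combined with $FG\cong\operatorname{id}$ on $\proj B$, which identifies $\Hom_A(M,G(J^\bullet))$ with $J^\bullet$ and forces $\Ext^n_A(M,M)=0$, the desired contradiction). Alternatively one can avoid $G$ entirely: by fullness of $F$ on $\add M$ the differentials $J^i\to J^{i+1}$ equal $F(d^i)$ for maps $d^i$ between injective $A$-modules, and $\Hom_A(M,-)$ \emph{reflects} exactness of the resulting complex because $M$ is a generator. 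Note in particular that it is this generator property --- not the double-centralizer property $A=\End_B(M)$ that you single out as indispensable --- which carries the upper bound.
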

For the calculation of Ext-groups involving simple modules, the following lemma will be useful:
\begin{lemma}\label{benson}
Let $A$ be a finite dimensional algebra, $N$ be an indecomposable $A$-module and $S$ a simple $A$-module.
Let $(P_i)$ be a minimal projective resolution of $N$ and $(I_i)$ a minimal injective resolution of $N$. \newline
1.For $l \geq 0$, $Ext^{l}(N,S) \neq 0$ iff $S$ is a quotient of $P_l$. \newline
2.For $l \geq 0$, $Ext^{l}(S,N) \neq 0$ iff $S$ is a submodule of $I_l$.
\end{lemma}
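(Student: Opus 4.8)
The plan is to compute each Ext-group directly from the given minimal resolution and to use minimality to show that, after applying the appropriate Hom-functor, every differential in the resulting cochain complex vanishes. Once that is done the cohomology degenerates to the Hom-term in each degree, and both claims reduce to the elementary observation that homomorphisms into, or out of, a simple module detect quotients, respectively submodules.

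For part 1, I would delete $N$ from the minimal projective resolution $(P_i)$ and apply $\Hom(-,S)$, obtaining the complex $0 \to \Hom(P_0,S) \to \Hom(P_1,S) \to \cdots$ whose $l$-th cohomology is $\Ext^{l}(N,S)$. The decisive point is that minimality means $\operatorname{im}(P_{i+1} \to P_i) \subseteq \rad(P_i) = JP_i$. Since $S$ is simple we have $JS=0$, so every homomorphism $P_i \to S$ annihilates $JP_i$ and in particular the image of $P_{i+1}$; hence each connecting map $\Hom(P_i,S) \to \Hom(P_{i+1},S)$, being precomposition with $P_{i+1}\to P_i$, is zero. Therefore $\Ext^{l}(N,S) \cong \Hom(P_l,S)$. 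Finally a homomorphism from $P_l$ to the simple module $S$ factors through $\top(P_l)$, and a nonzero one exists precisely when $S$ is a summand of $\top(P_l)$, equivalently when $S$ is a quotient of $P_l$; this gives the stated equivalence.

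Part 2 is the formal dual, using $\Hom(S,-)$ applied to the deleted minimal injective resolution $0 \to I_0 \to I_1 \to \cdots$, whose $l$-th cohomology computes $\Ext^{l}(S,N)$. Here the relevant minimality statement is that the kernel of $I_i \to I_{i+1}$ is the essential image of the $i$-th cosyzygy (with $I_i$ its injective envelope), so that $\soc(I_i) \subseteq \ker(I_i \to I_{i+1})$ for every $i \geq 0$. Any nonzero map $S \to I_i$ has simple image contained in $\soc(I_i)$, hence is killed by $I_i \to I_{i+1}$; thus all differentials of the Hom-complex vanish and $\Ext^{l}(S,N) \cong \Hom(S,I_l)$. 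Since $S$ is simple, $\Hom(S,I_l) \neq 0$ exactly when $S$ embeds into $\soc(I_l)$, i.e.\ when $S$ is a submodule of $I_l$. Alternatively one can deduce part 2 from part 1 by applying $D=\Hom_K(-,K)$, which interchanges minimal projective and minimal injective resolutions and turns quotients into submodules.

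The only genuinely delicate point is formulating the correct minimality condition in each case — the radical condition for projective resolutions and its socle-theoretic dual for injective resolutions — and verifying that it indeed forces the differentials of the Hom-complex to vanish; the remaining steps are routine.
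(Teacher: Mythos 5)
Your proof is correct, but it takes a different route from the paper only in the sense that the paper does not argue at all: its entire proof is the citation ``See \cite{Ben} Corollary 2.5.4.'' Your write-up supplies the standard argument behind that citation, and it is complete: minimality of the projective resolution gives $\operatorname{im}(P_{i+1}\to P_i)\subseteq \rad(P_i)$, which kills all differentials of $\Hom(P_\bullet,S)$ because $S$ is annihilated by the radical, so $\Ext^{l}(N,S)\cong\Hom(P_l,S)$, and this is nonzero exactly when $S$ is a summand of $\top(P_l)$, i.e.\ a quotient of $P_l$; dually, $\ker(I_i\to I_{i+1})$ is the $i$-th cosyzygy sitting essentially inside its injective envelope $I_i$, hence contains $\soc(I_i)$, which kills all differentials of $\Hom(S,I_\bullet)$ and gives $\Ext^{l}(S,N)\cong\Hom(S,I_l)$. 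Two minor observations: the indecomposability of $N$ assumed in the statement is never used (the lemma holds for any finite dimensional $N$), and your alternative derivation of part 2 from part 1 via $D=\Hom_K(-,K)$ is also legitimate, since $D$ carries a minimal injective resolution of $N$ to a minimal projective resolution of the left module $D(N)$ and $\Ext^{l}_A(S,N)\cong\Ext^{l}_{A^{op}}(D(N),D(S))$. What your version buys is self-containedness, making the paper independent of Benson's book for this point; what the paper's citation buys is brevity, which is reasonable for a fact this standard.
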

\begin{proof}
See \cite{Ben} Corollary 2.5.4. 
\end{proof}
An algebra $B$ is called \emph{gendo-symmetric} (see \cite{FanKoe} and \cite{FanKoe2}) in case $B \cong End_A(M)$ for a symmetric algebra $A$ and a generator-cogenerator $M$. In this case one has $A=eBe$, where $eB$ is the minimal faithful projective-injective right $B$-module for some idempotent $e$ and $M \cong Be$ as right $eBe$-modules. Examples of non-symmetric gendo-symmetric algebras are Schur algebras $S(n,r)$ for $n \geq r$, see \cite{FanKoe}.

For the basic notions of the representation theory of finite dimensional algebras including Auslander-Reiten theory, we refer to \cite{SkoYam}.
In the following we always assume that all algebras have dominant dimension larger than or equal to 1. Recall that the dominant dimension of $A_A$ is always equal to the dominant dimension of $_AA$ and thus the dominant dimension of $A$ always equals the codominant dimension (see \cite{Yam}). $eA$ denotes the minimal faithful injective-projective right $A$-module and $Af$ denotes the minimal faithful injective-projective left $A$-module for some idempotents $e$ and $f$ (recall that the existence of such modules is equivalent to $A$ having dominant dimension at least one).
An algebra $A$ is called \emph{$g$-Gorenstein} for a natural number $g$, in case the left and right injective dimensions of $A$ coincide and are equal to $g$. We call $A$ Gorenstein, if $A$ is $g$-Gorenstein for some $g$. In this case $Gordim(A):=injdim(A)$ is called the Gorenstein dimension of $A$.
An $A$-module $M$ is called \emph{Gorenstein projective}, in case $Ext^{i}(M,A)=0$ and $Ext^{i}(D(A),\tau(M))=0$ for every $i \geq 1$. In case $A$ is Gorenstein, being Gorenstein projective is equivalent to $Ext^{i}(M,A)=0$ for every $i \geq 1$, see \cite{Che}. Note that the Gorenstein projective modules coincide with the projective modules for algebras with finite global dimension. The \emph{Gorenstein projective dimension} $GPdim(M)$ of a module $M$ is defined as the minimal $n$ such that there exists an exact minimal sequence (called the minimal Gorenstein projective resolution of $M$):
$0 \rightarrow P_n \rightarrow ... \rightarrow P_1 \rightarrow P_0 \rightarrow M \rightarrow 0$, with the $P_i$ Gorenstein-projective. Dually, one can define Gorenstein injective modules and Gorenstein injective dimensions, see \cite{Che} for more details.
Let $GProj_m(A)$ denote the full subcategory of finitely generated modules of Gorenstein-projective dimension at most $m$ and $GInj_m(A)$ the full subcategory of finitely generated modules with Gorenstein-injective dimension at most $m$. We will write $GProj$ instead of $GProj_0$. Let $Proj_m(A)$ denote the full subcategory of finitely generated modules of projective dimension at most $m$ and $Inj_m(A)$ the full subcategory of finitely generated modules with injective dimension at most $m$. We often just write $Proj_m,Dom_m,...$ for $Proj_m(A),Dom_m(A),...$ if it is clear over which algebra we work.  $\Omega^{i}(A-mod)$ denotes the full subcategory of all projective modules or modules isomorphic to some $\Omega^{i}(N)$ for some $A$-module $N$.
We assume the reader to be familiar with the theory of quasi-hereditary algebras and standardly stratified algebras.
See \cite{DR} for an introduction to quasi-hereditary algebras and \cite{Rei}, \cite{ADL}, \cite{FrMa}, for the basics of standardly stratified algebras. We just briefly recall the most important definitions. 
Let $(A,E)$ be an algebra together with an ordered complete sequence of primitive orthogonal idempotents $E=(e_1,e_2,...,e_n)$. Then the sequence of standard right $A$-modules is defined by $\Delta=(\Delta(1),...,\Delta(n))$, where $\Delta(i)=e_iA/e_iJ(e_{i+1}+e_{i+1}+...+e_n)A$ for $1 \leq i \leq n$.
The sequence of proper standard right $A$-modules $\overline{\Delta}$ is defined by $\overline{\Delta}=(\overline{\Delta}(1),...,\overline{\Delta}(n))$, where $\overline{\Delta}(i)=e_iA/e_iJ(e_i+e_{i+1}+...+e_n)A$ for $1 \leq i \leq n$. Dually, one can define left standard $\Delta^{o}(i)$ and left proper standard modules $\overline{\Delta^{o}}(i)$. The costandard $\nabla(i)$ and proper costandard modules $\overline{\nabla(i)}$ are then defined as the modules $D(\Delta^{o}(i))$ and $D(\overline{\Delta^{o}}(i))$. For a subcategory $C$, let $F(C)$ be the full subcategory of all modules $M$ with a filtration $0 \subseteq M_d \subseteq ... \subseteq M_1=M$, such that every subquotient is isomorphic to an object in $C$. A module is called \emph{standardly filtered} in case $M \in F(\Delta)$ and we use obvious other names for modules in $F(\overline{\Delta})$ etc. $A$ is called \emph{standardly stratified} in case $A \in F(\overline{\Delta})$ and $A$ is called \emph{quasi-hereditary}, in case it is standardly stratified and has finite global dimension. In case $A$ and $A^{op}$ are standardly stratified, $A$ is called \emph{properly stratified}.
In case $A$ is standardly stratified there is a basic tilting module $T$, called the \emph{characteristic tilting module}, uniquely determined by the condition $add(T)=F(\Delta) \cap F(\overline{\nabla})$. In case $A$ is even properly stratified, there is dually a characteristic cotilting module. It is known that for quasi-hereditary algebra, the opposite algebra is again quasi-hereditary and the characteristic tilting module coincides with the characteristic cotilting module.
The class $\mathcal{A}$, introduced by Koenig and Fang in \cite{FanKoe3}, consists of the quasi-hereditary algebras $A$ with dominant dimension at least two and having a simple-preserving duality. For the rest of this article we always assume that dualities preserve simples and we will often just say duality for short. It was shown in \cite{FanKoe3} that all algebras in class $\mathcal{A}$ are gendo-symmetric.
We call an algebra with finite global dimension equal to its dominant dimension $d \geq 2$ a \emph{higher Auslander algebra}, following \cite{Iya}. Auslander algebras are those higher Auslander algebras with global dimension two. We call an algebra with finite Gorenstein dimension equal to its dominant dimension $d \geq 2$ an \emph{Auslander-Gorenstein algebra}, following \cite{IyaSol}.
The finitistic dimension of an algebra is defined as the supremum of the projective dimensions of all modules having finite projective dimension. In case the algebra is Gorenstein, the finititistic dimension coincides with the injective dimension of the regular module.

The following lemma is corollary 3.2. (2) from \cite{CheXi}.
\begin{lemma}
\label{xidom}
Let $0 \rightarrow Y_{-1} \rightarrow Y_0 \rightarrow Y_1 \rightarrow ... \rightarrow Y_{m-1} \rightarrow Y_{m} \rightarrow 0$ be a long exact sequence. Then $domdim(Y_m) \geq \min \{ domdim(Y_j)+j | -1 \leq j \leq m-1 \} -m+1$.
\end{lemma}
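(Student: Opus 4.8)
The plan is to break the long exact sequence into short exact sequences and to isolate a single inequality controlling the dominant dimension of a \emph{cokernel}, which I then iterate. Writing $K_j$ for the image of the map $Y_{j-1} \to Y_j$, exactness splits the sequence into short exact sequences $0 \to K_{j-1} \to Y_{j-1} \to K_j \to 0$ for $1 \le j \le m$, where $K_0 \cong Y_{-1}$ (the first map is injective) and $K_m \cong Y_m$ (the last map is surjective). Everything then reduces to the short exact sequence inequality
$$domdim(C) \ \ge\ \min\{\,domdim(B),\ domdim(A)-1\,\} \qquad (\ast)$$
for $0 \to A \to B \to C \to 0$: feeding the bound obtained for $K_{j-1}$ into the bound for $K_j$ and inducting on $j$ gives $domdim(K_j) \ge \min_{-1 \le i \le j-1}\{\,domdim(Y_i) - (j-1-i)\,\}$, and specialising to $j = m$, $K_m = Y_m$ yields exactly the asserted estimate after factoring out $-m+1$ from the minimum.

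The heart of the matter is therefore $(\ast)$, and I would base it on the standard reformulation that, for $n \ge 1$, one has $domdim(M) \ge n$ if and only if there is an exact sequence $0 \to M \to Q^0 \to \cdots \to Q^{n-1}$ with each $Q^i$ projective-injective (that is, in $add(eA)$); the nonformal direction compares such a coresolution with the minimal injective coresolution, using that projective-injective modules are closed under direct summands so that the minimal terms inherit projectivity. Granting this characterization, the middle-term inequality $domdim(B) \ge \min\{domdim(A),\ domdim(C)\}$ is immediate from the horseshoe lemma for injective coresolutions: splicing projective-injective coresolutions of $A$ and $C$ produces one for $B$ whose first $\min\{domdim(A), domdim(C)\}$ terms are again projective-injective.

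To obtain $(\ast)$ itself I would run an induction on $n := \min\{domdim(B),\ domdim(A)-1\}$, the case $n \le 0$ being vacuous since $domdim \ge 0$ always. For $n \ge 1$ we have $domdim(A) \ge 2$, so the first cosyzygy $\Omega^{-1}A = \operatorname{coker}(A \hookrightarrow I^0(A))$ fits into $0 \to A \to Q \to \Omega^{-1}A \to 0$ with $Q = I^0(A)$ projective-injective and $domdim(\Omega^{-1}A) \ge domdim(A)-1 \ge n$. Forming the pushout of $A \hookrightarrow B$ along $A \hookrightarrow Q$ produces a module $P$ together with two short exact sequences $0 \to B \to P \to \Omega^{-1}A \to 0$ and $0 \to Q \to P \to C \to 0$. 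The middle-term inequality applied to the first sequence gives $domdim(P) \ge \min\{domdim(B),\ domdim(\Omega^{-1}A)\} \ge n$; in the second sequence $Q$ is injective, hence a direct summand of $P$, so $P \cong Q \oplus C$ and therefore $domdim(C) = domdim(P) \ge n$, using that the dominant dimension of a direct sum is the minimum over its summands. This closes the induction and establishes $(\ast)$.

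The step I expect to be the main obstacle is the coresolution characterization of dominant dimension together with the bookkeeping that projectivity of the minimal injective coresolution is detected by \emph{any} projective-injective coresolution; once this is in place, both $(\ast)$ and the final d\'evissage are routine. (Alternatively, since the statement is precisely Corollary 3.2(2) of \cite{CheXi}, one may simply cite it, but the argument sketched above is self-contained.)
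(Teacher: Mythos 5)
Your proof is correct, but it takes a genuinely different route from the paper, which offers no argument at all for Lemma \ref{xidom}: it simply quotes the statement as Corollary 3.2(2) of \cite{CheXi}. Your d\'evissage into short exact sequences $0 \to K_{j-1} \to Y_{j-1} \to K_j \to 0$ with $K_0 \cong Y_{-1}$, $K_m \cong Y_m$, the reduction to the cokernel estimate $(\ast)$, and the induction giving $domdim(K_j) \ge \min_{-1\le i\le j-1}\{domdim(Y_i)-(j-1-i)\}$ all check out, as do the two ingredients you isolate: the characterization of $domdim(M)\ge n$ by the existence of a coresolution with $n$ projective-injective terms (valid because the minimal injective coresolution is a direct summand of any injective coresolution, and summands of projectives are projective), and the horseshoe-lemma middle-term inequality. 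The pushout step is also sound: in $0 \to Q \to P \to C \to 0$ the projective-injective module $Q$ splits off, $domdim(Q)=\infty$, so $domdim(C)=domdim(P)\ge \min\{domdim(B), domdim(\Omega^{-1}A)\}\ge n$. Two cosmetic remarks: your ``induction on $n$'' in the proof of $(\ast)$ never invokes the inductive hypothesis---the pushout argument establishes $domdim(C)\ge n$ directly from $domdim(B)\ge n$ and $domdim(A)\ge n+1$---so you may as well present it as a direct argument; and in degenerate cases where some $K_j$ or $C$ vanishes, the inequalities hold with the convention $domdim(0)=\infty$. What your approach buys is a self-contained proof using only the paper's own toolkit (minimal injective coresolutions and projective-injective modules), at the cost of length; the paper's citation buys brevity at the cost of making the lemma a black box.
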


Note that for Gorenstein algebras, tilting modules coincide with cotilting modules by the following theorem:

\begin{theorem}\label{gorcrit}
The following are equivalent for an algebra $A$:
\begin{enumerate}
\item $A$ is Gorenstein.
\item There exists a tilting module that is a cotilting module.
\item Every tilting module is a cotilting module.
\end{enumerate}
\end{theorem}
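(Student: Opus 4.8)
The plan is to reduce everything to three standard inputs and then prove the cycle $(1)\Rightarrow(3)\Rightarrow(2)\Rightarrow(1)$. The inputs I would use are: (A) over a Gorenstein algebra a module has finite projective dimension if and only if it has finite injective dimension; (B) a self-orthogonal module (meaning $\Ext^{i}(T,T)=0$ for $i>0$) of finite projective (respectively injective) dimension is a tilting (respectively cotilting) module precisely when its number of pairwise non-isomorphic indecomposable summands equals the number $n$ of simple $A$-modules; and (C) the classical fact (Zaks) that for a finite dimensional algebra finiteness of both $\mathrm{id}(A_A)$ and $\mathrm{id}({}_AA)$ already forces them to coincide, so that $A$ is Gorenstein in the sense used here.

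For $(1)\Rightarrow(3)$ I would take an arbitrary tilting module $T$. By definition $T$ is self-orthogonal, has finite projective dimension, and (by the forward direction of (B)) has exactly $n$ non-isomorphic indecomposable summands. Input (A) upgrades finite projective dimension to finite injective dimension, so $T$ is now a self-orthogonal module of finite injective dimension with $n$ indecomposable summands; the cotilting half of (B) then identifies $T$ as a cotilting module. The implication $(3)\Rightarrow(2)$ is immediate, since the regular module $A$ is itself a tilting module (it is projective, hence self-orthogonal, with the trivial coresolution $0\to A\to A\to 0$); by (3) it is also cotilting and therefore witnesses (2).

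For $(2)\Rightarrow(1)$ let $T$ be simultaneously tilting and cotilting. Its tilting coresolution $0\to A\to T^{0}\to\cdots\to T^{m}\to 0$ has all terms in $\add(T)$, hence of finite injective dimension because $T$ is cotilting; dimension shifting along the associated short exact sequences gives $\mathrm{id}(A_A)<\infty$. Dually, the cotilting coresolution $0\to T_{m}\to\cdots\to T_{0}\to D({}_AA)\to 0$ has all terms of finite projective dimension because $T$ is tilting, so $\mathrm{pd}(D({}_AA))<\infty$; applying the exact duality $D$, which interchanges projectives and injectives, this reads $\mathrm{id}({}_AA)<\infty$. Input (C) then makes the two injective dimensions equal, so $A$ is Gorenstein.

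The step I expect to be the real obstacle is input (B): the hard content is that a self-orthogonal module of finite projective dimension having the correct number of indecomposable summands is genuinely a tilting module (and the injective/cotilting dual of this), which rests on a Bongartz-type completion for generalized tilting modules of finite projective dimension. I would cite this rather than reprove it; the remaining ingredients — the Gorenstein equivalence of finite projective and injective dimension, routine dimension shifting, and the duality $D$ — are standard.
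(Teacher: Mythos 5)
Your cycle $(3)\Rightarrow(2)\Rightarrow(1)$ is correct: the regular module is tilting, and your dimension-shifting plus the duality $D$ plus Zaks' theorem (input (C)) correctly yields Gorensteinness from a tilting-cotilting module. The genuine gap is exactly where you predicted it, in $(1)\Rightarrow(3)$: input (B), in the direction you actually need it (self-orthogonal $+$ finite projective dimension $+$ $n$ indecomposable summands $\Rightarrow$ tilting, and its injective dual), is not a citable result. The ``Bongartz-type completion for generalized tilting modules of finite projective dimension'' on which you propose to rest it is known to be \emph{false}: Rickard and Schofield (\emph{Cocovers and tilting modules}, Math. Proc. Cambridge Philos. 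Soc. 106 (1989)) constructed self-orthogonal modules of finite projective dimension that are not direct summands of any tilting module. The weaker counting statement you need is, to current knowledge, an open problem: it would immediately imply the Wakamatsu tilting conjecture, since a Wakamatsu tilting module is self-orthogonal with exactly $n$ indecomposable summands, and that conjecture is open. What is actually known (Miyashita) is only the direction you use first, namely that every tilting module of finite projective dimension has exactly $n$ indecomposable summands; the converse counting criterion cannot be cited, so your $(1)\Rightarrow(3)$ does not go through as written.

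For comparison: the paper does not reprove the statement at all, it cites Happel--Unger, Lemma 1.3. Their argument for $(1)\Rightarrow(3)$ avoids counting entirely. Given a tilting module $T$ over a Gorenstein algebra, one first upgrades finite projective dimension to finite injective dimension (your input (A), which is fine), and then verifies the missing cotilting axiom \emph{directly}: one shows that $D(A)$ admits a finite $\add(T)$-resolution, using that $D(A)\in T^{\perp}$, that every module in $T^{\perp}$ admits an $\add(T)$-resolution with all kernels again in $T^{\perp}$, and a finiteness argument (for instance via the derived equivalence with $\End_A(T)$, under which $D(A)$ corresponds to a perfect complex because it has finite projective dimension over a Gorenstein algebra). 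If you want a self-contained proof, that is the route to take; the counting shortcut is not available.
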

\begin{proof}
See \cite{HU}, Lemma 1.3. 
\end{proof}

The next theorem is the main result in \cite{MazOv}.
\begin{theorem}\label{MazOvtheo}
Let $A$ be a properly stratified algebra with a duality and such that the characteristic tilting $T$ module coincides with the characteristic cotilting module. Then the finitistic dimension of $A$ equals two times the projective dimension of $T$.
\end{theorem}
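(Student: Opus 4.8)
The plan is to reduce the computation of the finitistic dimension to that of the Gorenstein dimension and then to evaluate the latter from the (co)tilting coresolutions together with the duality. First I would note that, since $T$ is at the same time a tilting and a cotilting module, Theorem \ref{gorcrit} shows that $A$ is Gorenstein; hence the finitistic dimension of $A$ equals $injdim(A_A)=Gordim(A)$, and it suffices to prove that this number equals $2m$, where $m=\operatorname{pd}(T)$. I would then exploit the duality $\delta$: for a properly stratified algebra with a simple-preserving duality one has $\delta(F(\Delta))=F(\nabla)$ and $\delta(F(\bar{\nabla}))=F(\bar{\Delta})$ (compare composition factors and socles), so $\delta$ carries the characteristic tilting module $\add(T)=F(\Delta)\cap F(\bar{\nabla})$ onto the characteristic cotilting module $F(\bar{\Delta})\cap F(\nabla)$. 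By hypothesis these coincide, whence $\delta(T)\cong T$. Since $\delta$ turns projective resolutions into injective coresolutions, $\operatorname{pd}(N)=\operatorname{id}(\delta N)$ for every $N$, and applying this to $N=T$ gives $\operatorname{id}(T)=\operatorname{pd}(T)=m$.

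For the upper bound $Gordim(A)\le 2m$ I would use the defining tilting coresolution $0\to A\to T^{0}\to\cdots\to T^{m}\to 0$ with all $T^{i}\in\add(T)$, of length $\operatorname{pd}(T)=m$. As $\operatorname{id}(T^{i})\le\operatorname{id}(T)=m$ for each $i$, splitting this sequence into short exact sequences and iterating the standard inequality $\operatorname{id}(X)\le\max\{\operatorname{id}(Y),\operatorname{id}(Z)+1\}$ for $0\to X\to Y\to Z\to 0$ yields $injdim(A_A)\le m+m=2m$.

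The main obstacle is the matching lower bound $Gordim(A)\ge 2m$, for which one must produce a nonzero $\Ext$-group in degree $2m$ with target $A$. I would first establish a nonzero class in degree $m$. Because $\operatorname{id}(T)=m$, the last term $I^{m}$ of a minimal injective coresolution of $T$ is nonzero; choosing a simple submodule $S\subseteq I^{m}$, Lemma \ref{benson}(2) gives $\Ext^{m}(S,T)\neq 0$, and writing $S=\soc(J)$ for an indecomposable injective summand $J$ of $I^{m}$, the sequence $0\to S\to J\to J/S\to 0$ together with $\Ext^{m+1}(J/S,T)=0$ forces $\Ext^{m}(J,T)\twoheadrightarrow\Ext^{m}(S,T)\neq0$. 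As $J$ is a summand of $DA$, this gives $\Ext^{m}(DA,T)\neq 0$, and via the duality (using $\delta(T)\cong T$ and $\delta(A_A)\cong DA$) one gets $\Ext^{m}(T,A)\cong\Ext^{m}(DA,T)\neq 0$.

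The remaining and hardest point is to promote this degree-$m$ nonvanishing to degree $2m$. I expect to do this through the Yoneda composition $\Ext^{m}(X,T)\times\Ext^{m}(T,A)\to\Ext^{2m}(X,A)$: the right factor is nonzero by the previous paragraph, the left factor is nonzero for a suitable syzygy $X$ of a simple module detected via Lemma \ref{benson}(1) (using $\operatorname{pd}(T)=m$), and the crux is to choose $X$ so that the composite is nonzero, which would give $injdim(A_A)\ge 2m$. I anticipate that this nonvanishing is where the real work lies and will require the minimality of the resolutions together with the fact that both $\operatorname{pd}(T)$ and $\operatorname{id}(T)$ are attained; an equivalent route is to compute $\Ext^{2m}(DA,A)$ from the dual cotilting coresolution $0\to T^{m}\to\cdots\to T^{0}\to DA\to 0$ and to show that the terminal cokernel does not degenerate, which presents the same difficulty. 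Combining the two bounds yields $Gordim(A)=2m$, and hence the finitistic dimension of $A$ equals $2\operatorname{pd}(T)$.
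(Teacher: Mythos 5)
Your proposal cannot be compared step-by-step with a proof in the paper, because the paper gives none: Theorem \ref{MazOvtheo} is quoted there as the main result of \cite{MazOv} (Mazorchuk--Ovsienko), with the proof left entirely to that reference. Judged on its own terms, your attempt gets the easy half right but has a genuine gap at exactly the point where the theorem's content lies. The correct parts: $A$ is Gorenstein by \ref{gorcrit}, so $\operatorname{findim}(A)=\operatorname{injdim}(A_A)$; the simple-preserving duality $\delta$ sends $F(\Delta)$ to $F(\nabla)$ and $F(\bar{\Delta})$ to $F(\bar{\nabla})$, hence sends the characteristic tilting module to the characteristic cotilting module, so the hypothesis gives $\delta(T)\cong T$ and $\operatorname{injdim}(T)=\operatorname{projdim}(T)=m$; the $\add(T)$-coresolution $0\to A\to T^0\to\cdots\to T^m\to 0$ gives $\operatorname{injdim}(A_A)\le 2m$; and your argument via Lemma \ref{benson} and the duality that $\Ext^m(T,A)\neq 0$ is sound, giving $\operatorname{findim}(A)\ge m$.

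The gap is the lower bound $\operatorname{findim}(A)\ge 2m$, which you explicitly defer to a hoped-for nonvanishing of the Yoneda product $\Ext^{m}(X,T)\times\Ext^{m}(T,A)\to\Ext^{2m}(X,A)$. There is no mechanism in your setup that forces such a product to be nonzero: Yoneda products of nonzero classes vanish routinely (already over $K[x]/(x^3)$ the square of the generator of $\Ext^1(K,K)$ is zero), and neither minimality of the resolutions nor the attainment of $\operatorname{projdim}(T)$ and $\operatorname{injdim}(T)$ rules this out. The deeper issue is structural: every hypothesis you actually use -- Gorensteinness, the duality, $\delta(T)\cong T$ -- ignores the stratification, and the formal consequences of those hypotheses are precisely the two bounds $m\le\operatorname{findim}(A)\le 2m$ that you obtained; nothing at that level of generality singles out the value $2m$. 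Closing the interval is where Mazorchuk and Ovsienko do real work, using the properly stratified structure itself (the filtrations by standard and proper standard modules and the interplay between $F(\Delta)$, $F(\bar{\Delta})$ and the tilting coresolution) to exhibit a module of projective dimension exactly $2m$. So your proposal is a correct reduction plus one of the two required estimates, with the essential half of the theorem missing.
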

Note that dually the finitistic dimension of $A$ equals two times the injective dimension of $T$ and that an algebra as in the previous theorem has to be Gorenstein by \ref{gorcrit}.

\begin{theorem}
\label{gorprores}
Let $M$ be an $A$-module. Then the following are equivalent:
\begin{enumerate}
\item $GPdim(M) \leq n$
\item There exists an exact sequence $0 \rightarrow G_n \rightarrow ... \rightarrow G_1 \rightarrow G_0 \rightarrow M  \rightarrow 0$ with each $G_i$ Gorenstein projective.
\item For each exact sequence $0 \rightarrow K \rightarrow G_{n-1} \rightarrow ... \rightarrow G_1 \rightarrow G_0 \rightarrow M \rightarrow 0$ with each $G_i$ Gorenstein projective, also $K$ is Gorenstein projective.
\end{enumerate}
\end{theorem}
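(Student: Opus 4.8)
The plan is to prove the cycle of implications $(1)\Rightarrow(2)\Rightarrow(3)\Rightarrow(1)$. The implication $(1)\Rightarrow(2)$ is immediate: by the very definition of $GPdim(M)$ as the length of a minimal Gorenstein projective resolution, $GPdim(M)\leq n$ already produces such an exact sequence, which can be padded by zeros so that it has length exactly $n$.

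For $(3)\Rightarrow(1)$, I would start from an ordinary minimal projective resolution of $M$ and truncate it: writing $\Omega^{n}(M)$ for the $n$-th syzygy, the sequence $0\rightarrow \Omega^{n}(M)\rightarrow P_{n-1}\rightarrow \cdots \rightarrow P_0\rightarrow M\rightarrow 0$ is exact with every $P_i$ projective, hence Gorenstein projective. Applying $(3)$ to this particular sequence shows that $K=\Omega^{n}(M)$ is Gorenstein projective, so the displayed sequence is itself a Gorenstein projective resolution of length $n$, and therefore $GPdim(M)\leq n$.

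The real content is $(2)\Rightarrow(3)$, and here I would argue by dimension shifting against the regular module, exploiting that $A$ is Gorenstein so that, by Chen's characterization recalled above, a module is Gorenstein projective precisely when $Ext^{i}(-,A)=0$ for all $i\geq 1$. Given any exact sequence $0\rightarrow K\rightarrow G_{n-1}\rightarrow \cdots \rightarrow G_0\rightarrow M\rightarrow 0$ with the $G_i$ Gorenstein projective, I break it into short exact sequences $0\rightarrow K_{j+1}\rightarrow G_j\rightarrow K_j\rightarrow 0$ (with $K_0=M$ and $K_n=K$) and feed each into the long exact sequence of $Ext^{\bullet}(-,A)$. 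Since $Ext^{i}(G_j,A)=0$ for every $i\geq 1$, the long exact sequence collapses to isomorphisms $Ext^{i}(K_{j+1},A)\cong Ext^{i+1}(K_j,A)$ for $i\geq 1$, which iterate to $Ext^{i}(K,A)\cong Ext^{i+n}(M,A)$ for all $i\geq 1$. Now the sequence supplied by $(2)$ has its final kernel isomorphic to the Gorenstein projective module $G_n$, so the same computation gives $Ext^{i+n}(M,A)\cong Ext^{i}(G_n,A)=0$ for $i\geq 1$, that is $Ext^{>n}(M,A)=0$. Combining the two, $Ext^{i}(K,A)=0$ for all $i\geq 1$, and Chen's criterion forces $K$ to be Gorenstein projective, which is exactly $(3)$.

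The step I expect to be delicate is precisely $(2)\Rightarrow(3)$: the clean dimension-shifting argument rests on the equivalence ``Gorenstein projective $\Leftrightarrow Ext^{i}(-,A)=0$'', which is only available once $A$ is Gorenstein, the standing setting of this paper. If one wanted the statement for an arbitrary algebra, the $Ext$-computation would have to be replaced by the generalized Schanuel lemma together with the fact that the class of Gorenstein projective modules is resolving, namely that it contains the projectives and is closed under extensions, direct summands, and kernels of epimorphisms between its members; one then compares the kernel $K$ of an arbitrary length-$n$ partial resolution with the Gorenstein projective module $G_n$ furnished by $(2)$ and deduces that $K$ is Gorenstein projective from closure under summands. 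Either way, the theorem reduces to the single nontrivial input that $GProj$ is homologically well behaved, and it is this structural fact that does the actual work.
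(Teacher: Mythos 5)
Your skeleton is right and two of the three legs are unproblematic: $(1)\Rightarrow(2)$ is definitional, and $(3)\Rightarrow(1)$ via the truncated minimal projective resolution (projectives are Gorenstein projective) is the standard argument. The gap is in $(2)\Rightarrow(3)$. Your dimension shifting rests on the criterion that Gorenstein projectivity is equivalent to $\Ext^{i}(-,A)=0$ for all $i\geq 1$, which you invoke on the grounds that Gorenstein-ness of $A$ is ``the standing setting of this paper''. It is not: the standing assumptions are only that $A$ is finite dimensional, connected, non-semisimple and of dominant dimension at least one, and the theorem itself puts no Gorenstein hypothesis on $A$. Indeed the very next preliminary result, \ref{gorensteinchara}, characterizes \emph{when} $A$ is Gorenstein in terms of global Gorenstein projective dimension; for that statement to be meaningful (and provable in the direction from finite global Gorenstein projective dimension to Gorenstein-ness) one needs Theorem \ref{gorprores} for algebras not yet known to be Gorenstein, so restricting to the Gorenstein case risks circularity. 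The paper itself gives no proof but cites Chen's Theorem 3.2.5, which is established for arbitrary Artin algebras. To be fair: within the Gorenstein case your $\Ext$-shifting computation is correct, and that case does cover every place where the paper actually applies Gorenstein projective dimensions, since those applications concern Auslander-Gorenstein algebras.

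Your fallback for the general case also contains a genuine error: there is no ``generalized Schanuel lemma'' for Gorenstein projective resolutions, because short exact sequences ending in a Gorenstein projective module need not split. For a concrete failure, let $A=K[x]/(x^{2})$ and $S$ the simple module; $A$ is self-injective, so every module is Gorenstein projective, and comparing $0\rightarrow S\rightarrow A\rightarrow S\rightarrow 0$ with $0\rightarrow 0\rightarrow S\rightarrow S\rightarrow 0$, a Schanuel-type statement would give $S\oplus S\cong A$, which is false since $A$ is indecomposable. The correct general proof of $(2)\Rightarrow(3)$ uses exactly the package you name --- $GProj$ contains the projectives and is closed under extensions, kernels of epimorphisms and direct summands --- but with a different comparison device: either induct using the pullback $X$ of $G_{0}\rightarrow M\leftarrow G_{0}'$ (closure under extensions makes $X$ Gorenstein projective, and then each kernel becomes the kernel of an epimorphism between Gorenstein projectives), or lift $\mathrm{id}_{M}$ to a chain map from a projective resolution and take the mapping cone; in the resulting exact sequence one kernel is exhibited as a direct summand of an extension of Gorenstein projectives, and only then does closure under summands finish the proof. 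So the right ingredients appear in your last paragraph, but the lemma you hang them on is false, and replacing it by the pullback or mapping-cone comparison is precisely the content of the result the paper cites.
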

\begin{proof}
See \cite{Che} Theorem 3.2.5.
\end{proof}

Following \cite{Che}, define the \emph{global Gorenstein projective dimension} of $A$ as $glGPdim(A):=sup \{ GPdim(M) | M \in mod-A \}$.
\begin{theorem}
\label{gorensteinchara}
For an algebra $A$ the following are equivalent:
\begin{enumerate}
\item $A$ is Gorenstein of Gorenstein dimension $n$.
\item $A$ has finite global Gorenstein projective dimension $n$.
\item $GProj=\Omega^{n}(A-mod)$.
In case $A$ is Gorenstein, the Gorenstein dimension equals the finitistic dimension of $A$.
\end{enumerate}
\end{theorem}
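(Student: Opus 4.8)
The plan is to prove the cycle $(1)\Rightarrow(2)\Rightarrow(3)\Rightarrow(1)$ and to treat the finitistic dimension statement at the end, the whole argument resting on one bookkeeping observation. Applying Theorem~\ref{gorprores} to the truncated projective resolution $0\to\Omega^n(M)\to P_{n-1}\to\cdots\to P_0\to M\to 0$ shows that $GPdim(M)\leq n$ if and only if $\Omega^n(M)$ is Gorenstein projective. Hence $glGPdim(A)\leq n$ is equivalent to the inclusion $\Omega^n(A-mod)\subseteq GProj$, and this single equivalence welds together the ``easy halves'' of $(2)$ and $(3)$. I would record it first and reuse it throughout.

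For $(1)\Rightarrow(2)$, assume $Gordim(A)=n$. Since $A$ is Gorenstein, being Gorenstein projective is equivalent to $\Ext^i(-,A)=0$ for all $i\geq 1$, so from $\Ext^i(\Omega^n(M),A)\cong\Ext^{i+n}(M,A)$ together with $injdim(A_A)=n$ one gets that $\Omega^n(M)$ is Gorenstein projective for every $M$; thus $glGPdim(A)\leq n$. For the reverse inequality I would choose a module $M_0$ with $\Ext^n(M_0,A)\neq 0$ (available because $injdim(A_A)=n$) and note that $GPdim(M_0)=k<n$ would force $\Ext^n(M_0,A)\cong\Ext^{n-k}(\Omega^k(M_0),A)=0$, a contradiction; hence $glGPdim(A)=n$. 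The implication $(2)\Rightarrow(3)$ uses the bookkeeping observation for the inclusion $\Omega^n(A-mod)\subseteq GProj$, while the reverse inclusion is formal: every Gorenstein projective $G$ is a cocycle of a totally acyclic complex, so it admits a coresolution $0\to G\to P^0\to\cdots\to P^{n-1}\to C\to 0$ by projectives, and reading this as a projective resolution of $C$ exhibits $G\cong\Omega^n(C)\in\Omega^n(A-mod)$.

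The delicate direction is $(3)\Rightarrow(1)$. The bookkeeping observation again gives $\Ext^{n+1}(M,A)\cong\Ext^1(\Omega^n(M),A)=0$ for all right modules $M$, since $\Omega^n(M)$ is Gorenstein projective and hence satisfies $\Ext^{\geq 1}(-,A)=0$ by definition; therefore $injdim(A_A)\leq n$. \emph{This is the main obstacle.} Upgrading this one-sided finiteness to the two-sided statement that $A$ is Gorenstein with $injdim(A_A)=injdim(_AA)=n$ cannot be done by a purely right-handed argument, which only controls $injdim(A_A)$ and in isolation would run straight into the (open) Gorenstein symmetric conjecture. This is exactly where I would invoke the nontrivial input from \cite{Che}: finiteness of the global Gorenstein projective dimension is a left--right symmetric condition characterising Gorenstein algebras, so $injdim(_AA)$ is finite as well and then equals $injdim(A_A)$ by Zaks' theorem. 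Once $A$ is known to be Gorenstein, the precise value is forced by re-running the $(1)\Rightarrow(2)$ computation, with $n$ read as the least exponent for which $\Omega^n(A-mod)\subseteq GProj$, which is the content of $(3)$.

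Finally, for the finitistic dimension equality: if $pd(M)<\infty$ then $\Omega^n(M)$ is Gorenstein projective of finite projective dimension, hence projective, giving $pd(M)\leq n$ and $findim(A)\leq n$. The reverse inequality comes from the duality $\Ext^i_A(M,A_A)\cong\Ext^i_{A^{op}}(D(A_A),DM)$, which turns $injdim(A_A)=n$ into $pd_{A^{op}}(D(A_A))=n$ and, using the already-established Gorenstein symmetry, exhibits a module of projective dimension exactly $n$. I expect no difficulty here beyond the symmetry input used above.
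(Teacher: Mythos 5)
Your proposal takes a genuinely different route from the paper: the paper's entire ``proof'' of \ref{gorensteinchara} is a citation to \cite{Che} (Theorem 2.3.7, Corollary 3.2.6, Theorem 3.2.7), whereas you reconstruct the equivalences from \ref{gorprores}, the $\Ext$-characterization of Gorenstein projectives over Gorenstein algebras, and dimension shifting. Your bookkeeping observation ($GPdim(M)\leq n$ iff $\Omega^{n}(M)$ is Gorenstein projective), the implication $(1)\Rightarrow(2)$ including the exactness of the value $n$, and the finitistic-dimension argument are all correct and use only facts quoted in the paper. You also correctly isolate where the real difficulty sits: passing from $injdim(A_A)\leq n$ to two-sided Gorensteinness is not formal (it is exactly the Gorenstein-symmetry issue), and you outsource precisely this step to \cite{Che} --- the same source the paper cites for everything. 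So your argument is at least as complete as the paper's and considerably more informative about which part is hard. One further hidden import you should acknowledge: in $(2)\Rightarrow(3)$ the claim that every Gorenstein projective module is a cocycle of a totally acyclic complex is itself a nontrivial theorem relative to the paper's definition of Gorenstein projective via the two $\Ext$-vanishing conditions, and again comes from \cite{Che}.

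There is, however, one genuine wrinkle that you should state openly rather than hide in a parenthesis: with the exact value $n$, the implication $(3)\Rightarrow(1)$ is false as literally stated, and your own argument shows why. The coresolution argument you use in $(2)\Rightarrow(3)$ gives $GProj\subseteq\Omega^{m}(A-mod)$ for \emph{every} $m\geq 0$; combined with $(1)\Rightarrow(2)\Rightarrow(3)$ this yields $GProj=\Omega^{m}(A-mod)$ for every $m\geq Gordim(A)$, not just for $m=Gordim(A)$. For instance, if $A$ is self-injective then every module is Gorenstein projective and every module is a first syzygy, so $GProj=\Omega^{1}(A-mod)$ while the Gorenstein dimension is $0$, not $1$. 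Hence condition (3) for a fixed $n$ only pins down $Gordim(A)\leq n$. Your reading of $n$ as ``the least exponent for which $\Omega^{n}(A-mod)\subseteq GProj$'' is the correct repair, but it quietly replaces the stated condition (3) by a stronger one; the defect originates in the paper's compressed formulation of Chen's results (which are stated with inequalities, together with ``in that case the dimensions coincide'') rather than in your argument, and a careful write-up should say exactly that instead of asserting that the least-exponent reading ``is the content of (3)''.
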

\begin{proof}
See \cite{Che}, Theorem 2.3.7.,Corollary 3.2.6. and Theorem 3.2.7. 
\end{proof}

\begin{theorem}\label{marviltheo}
\label{marvil}
Let $A$ be an algebra of dominant dimension $n \geq 1$. Let $i \leq n$, then the following holds:
\begin{enumerate}
\item $Dom_i = \Omega^{i}(A-mod)$.
\item $Dom_i^{\perp}=Inj_i , ^{\perp}Inj_i=Dom_i, Proj_i^{\perp}=Codom_i, ^{\perp}Codom_i=Proj_i$.
\end{enumerate}
\end{theorem}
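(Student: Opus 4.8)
The plan is to derive both statements from a single genuinely nontrivial inclusion, after recording the standard characterization (due to M\"uller, in the spirit of Theorem \ref{Mueller}) that $\operatorname{domdim}(M)\geq t$ holds if and only if $M$ admits an exact sequence $0\to M\to Q^0\to\cdots\to Q^{t-1}$ with all $Q^j\in\add(eA)$ projective-injective; since $\operatorname{domdim}(A)\geq n$, every projective module then has dominant dimension $\geq n\geq i$. I would first settle (1). For the inclusion $Dom_i\subseteq\Omega^{i}(A\text{-mod})$, the projective-injective coresolution above, read as $0\to M\to Q^0\to\cdots\to Q^{i-1}\to C\to 0$, exhibits $M$ as an $i$-th syzygy of its cokernel $C$ (the $Q^j$ being projective), so $M\in\Omega^{i}(A\text{-mod})$; projective modules lie there by definition.

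For the reverse inclusion I would use the short exact sequences $0\to\Omega^{i}N\to P_{i-1}\to\Omega^{i-1}N\to 0$ together with the elementary estimate that for any $0\to X\to E\to Z\to 0$ one has $\operatorname{domdim}(X)\geq\min(\operatorname{domdim}(E),\operatorname{domdim}(Z)+1)$. This is a left-hand companion to the cited Lemma \ref{xidom}, which only bounds the right-most term, and the reverse here needs exactly this new input. It is proved by a short snake-lemma computation: embedding $X\hookrightarrow E\hookrightarrow I^0(E)$ into a projective-injective, the quotient $V=I^0(E)/X$ sits in $0\to Z\to V\to\Omega^{-1}E\to 0$, so $\operatorname{domdim}(V)\geq\min(\operatorname{domdim}(Z),\operatorname{domdim}(E)-1)$ by additivity of the dominant dimension on extensions and the bound $\operatorname{domdim}(\Omega^{-1}E)\geq\operatorname{domdim}(E)-1$, and then $\operatorname{domdim}(X)\geq\operatorname{domdim}(V)+1$ since a syzygy through a projective-injective raises the dominant dimension by one. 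Feeding $\operatorname{domdim}(P_{i-1})\geq n\geq i$ and the induction hypothesis into the estimate gives $\operatorname{domdim}(\Omega^{i}N)\geq\min(n,i)=i$, completing (1).

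For (2) over $A$, the isomorphism $\Ext^{k}(\Omega^{i}N,Y)\cong\Ext^{k+i}(N,Y)$ for $k\geq 1$ shows $\Omega^{i}(A\text{-mod})\subseteq {}^{\perp}Inj_i$ with no hypothesis, and combined with (1) it yields both $Dom_i\subseteq {}^{\perp}Inj_i$ and $Dom_i^{\perp}=Inj_i$ (for the latter, $Y\in Dom_i^{\perp}$ forces $\Ext^{\geq i+1}(N,Y)=0$ for all $N$, i.e.\ injective dimension $\leq i$). The one remaining and genuinely hard inclusion is ${}^{\perp}Inj_i\subseteq Dom_i$, which I would reduce to the case $i=1$: if $M\in {}^{\perp}Inj_i$ had its first non-projective injective-coresolution term at some index $k<i$, then the cosyzygy $M_k=\Omega^{-k}M$ would satisfy $\Ext^1(M_k,Y)\cong\Ext^{k+1}(M,Y)=0$ for every $Y$ of injective dimension $\leq 1$, while $\operatorname{domdim}(M_k)=0$. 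The base case rules this out: if $\operatorname{domdim}(W)=0$, so $I^0(W)$ is not projective, take a projective cover $P\twoheadrightarrow I^0(W)$ and extend it along $P\hookrightarrow I^0(P)$ (projective-injective, as $\operatorname{domdim}(A)\geq 1$) to a surjection $f\colon I^0(P)\twoheadrightarrow I^0(W)$; then $Y=\ker f$ has injective dimension $\leq 1$ and $\Ext^1(W,Y)\cong\operatorname{coker}(\Hom(W,I^0(P))\to\Hom(W,I^0(W)))\neq 0$, because the envelope inclusion $W\hookrightarrow I^0(W)$ cannot lift through $f$ — a lift would embed $W$, hence its injective envelope $I^0(W)$, as a summand of the projective-injective $I^0(P)$, contradicting non-projectivity. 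This produces the required test module, giving ${}^{\perp}Inj_1\subseteq Dom_1$ and hence ${}^{\perp}Inj_i\subseteq Dom_i$.

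Finally, the statements for $Proj_i$ and $Codom_i$ come for free by applying everything above to $A^{op}$ (which has the same dominant dimension) and transporting along the duality $D=\Hom_K(-,K)$, which interchanges dominant and codominant dimension, $Proj_i$ and $Inj_i$, and reverses $\Ext$; thus $Dom_i^{\perp}=Inj_i$ and ${}^{\perp}Inj_i=Dom_i$ over $A^{op}$ dualize exactly to $Proj_i^{\perp}=Codom_i$ and ${}^{\perp}Codom_i=Proj_i$ over $A$. The main obstacle throughout is the single inclusion ${}^{\perp}Inj_i\subseteq Dom_i$: its difficulty is that $\Ext$ into injectives always vanishes, so the hypothesis $\operatorname{domdim}(A)\geq 1$ must be used precisely to detect non-projectivity of an injective envelope by a module of injective dimension one, which is exactly what the projective-cover-into-injective-envelope construction accomplishes.
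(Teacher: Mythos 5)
First, a point of comparison: the paper does not prove this theorem at all --- its ``proof'' is the citation of \cite{MarVil} (Proposition 4 and the corollary following it) --- so your attempt is being measured against the literature rather than against an argument in the text. Most of what you write is correct: the inclusion $Dom_i\subseteq\Omega^{i}(A\text{-mod})$ by truncating the projective-injective coresolution; the reverse inclusion via your kernel estimate $\operatorname{domdim}(X)\geq\min(\operatorname{domdim}(E),\operatorname{domdim}(Z)+1)$, which is a true lemma and provable by the induction you sketch; the equality $Dom_i^{\perp}=Inj_i$ and the inclusion $Dom_i\subseteq{}^{\perp}Inj_i$ by dimension shifting along projective resolutions; the transport to $Proj_i$ and $Codom_i$ via $D$; and in particular your base case: for $W$ with $I^0(W)$ non-projective, the module $Y=\ker\bigl(I^0(P)\twoheadrightarrow I^0(W)\bigr)$ has injective dimension at most one and $\Ext^1(W,Y)\neq 0$, which correctly proves ${}^{\perp}Inj_1\subseteq Dom_1$ and is a genuinely nice argument.

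The gap is the reduction of ${}^{\perp}Inj_i\subseteq Dom_i$ to that base case: the isomorphism $\Ext^1(\Omega^{-k}M,Y)\cong\Ext^{k+1}(M,Y)$ you invoke is backwards, hence false. From $0\to\Omega^{-m}M\to I^m\to\Omega^{-(m+1)}M\to 0$ with $I^m$ projective, the long exact sequence of $\Hom(-,Y)$ gives $\Ext^{j}(\Omega^{-m}M,Y)\cong\Ext^{j+1}(\Omega^{-(m+1)}M,Y)$ only for $j\geq 1$; iterating yields $\Ext^{j+k}(\Omega^{-k}M,Y)\cong\Ext^{j}(M,Y)$ for $j\geq 1$, i.e.\ \emph{high} Ext of the cosyzygy equals \emph{low} Ext of $M$, never the reverse, while $\Ext^{1}(\Omega^{-k}M,Y)$ is merely a cokernel of $\Hom(I^{k-1},Y)\to\Hom(\Omega^{-(k-1)}M,Y)$ and is not any Ext group of $M$. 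Concretely, let $A$ be the Auslander algebra of $K[x]/(x^2)$, $P$ its indecomposable projective-injective, $S=\soc(P)$ and $I=P/S$ the non-projective indecomposable injective; then $0\to S\to P\to I\to 0$ is the minimal injective coresolution of $S$ (so $k=1$ for $M=S$), and with $Y=S$ (injective dimension one) one has $\Ext^{2}(S,S)=0$ since $\operatorname{projdim}(S)=1$, whereas $\Ext^{1}(\Omega^{-1}S,S)=\Ext^{1}(I,S)\cong\Hom(S,S)\cong K\neq 0$, because every homomorphism $P\to S$ kills $\rad(P)\supseteq\soc(P)$. The consequence for your proof: the hypothesis $M\in{}^{\perp}Inj_i$ only yields vanishing of $\Ext^{j}(\Omega^{-k}M,-)$ on $Inj_i$ for $j\geq k+1$, whereas your test module detects non-projectivity of the injective envelope in degree exactly $1$ --- precisely the degree the shift cannot reach. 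This is not an indexing slip that can be repaired by re-reading the arrows: by your own base case, the vanishing of $\Ext^{1}(\Omega^{-k}M,-)$ on $Inj_1$ is essentially equivalent to the conclusion $\operatorname{domdim}(M)\geq k+1$, so it cannot be a formal consequence of $M\in{}^{\perp}Inj_i$ obtained by dimension shifting; the inductive step needs a genuinely new idea, and that is exactly where the cited proof of Martinez-Villa does its real work.
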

\begin{proof}
See \cite{MarVil}, Proposition 4 and the corollary after it. 
\end{proof}

Tilting modules here always denote tilting modules of arbitrary finite projective dimension. For a subcategory $C$ of $mod-A$, we denote by $\overline{C}$ the subcategory of $mod-A$ that consists of all module $M$ for which an exact sequence $0 \rightarrow C_n \rightarrow ... \rightarrow C_1 \rightarrow C_0 \rightarrow M \rightarrow 0$ exists with $C_i \in C$. Dually $\underline{C}$ is the subcategory of $mod-A$ consisting of all modules $M$ for which an exact sequence $0 \rightarrow M \rightarrow C_0 \rightarrow C_1 \rightarrow ... \rightarrow C_n \rightarrow 0$ exists with $C_i \in C$. 
\begin{proposition}
\label{reiprop}
\begin{enumerate}
\item Let $T$ be a tilting module. Then $^{\perp}(T^{\perp})=\underline{addT}$.
\item Let $U$ be a cotilting module. Then $(^{\perp}U)^{\perp}=\overline{add(U)}$.
\end{enumerate}
\end{proposition}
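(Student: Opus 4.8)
The plan is to prove (1) directly and then obtain (2) from (1) by applying the $K$-duality $D$. Throughout I would use the defining properties of a tilting module $T$: it has finite projective dimension $d:=\operatorname{pd}(T)$, it satisfies $\Ext^i(T,T)=0$ for all $i\geq 1$, and the regular module admits a coresolution $0\to A\to T^0\to\cdots\to T^d\to 0$ with each $T^j\in\add T$. Recall also that $T^{\perp}$ here means $\{Y\mid \Ext^i(T,Y)=0,\ i\geq 1\}$, so that $\add T\subseteq T^{\perp}$.

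For the inclusion $\underline{\add T}\subseteq{}^{\perp}(T^{\perp})$ I would argue by dimension shifting. Given $M\in\underline{\add T}$ with a coresolution $0\to M\to T_0\to\cdots\to T_n\to 0$, I split it into short exact sequences $0\to C_j\to T_j\to C_{j+1}\to 0$ with $C_0=M$ and $C_n=T_n$, fix $Y\in T^{\perp}$, and run a downward induction on $j$ in the long exact $\Ext(-,Y)$-sequence. The base case $C_n=T_n\in\add T$ gives $\Ext^i(C_n,Y)=0$, and the inductive step uses $\Ext^i(T_j,Y)=0$ together with the vanishing of $\Ext^i(C_{j+1},Y)$ and $\Ext^{i+1}(C_{j+1},Y)$; taking $j=0$ yields $\Ext^i(M,Y)=0$ for all $i\geq 1$, i.e. $M\in{}^{\perp}(T^{\perp})$.

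For the reverse inclusion ${}^{\perp}(T^{\perp})\subseteq\underline{\add T}$ I would build a coresolution of $M\in{}^{\perp}(T^{\perp})$ step by step and then use a quantitative shift to force termination. Since $T$ has finitely many indecomposable summands, $\add T$ is functorially finite, and standard tilting theory then supplies, for each such $M$, a short exact sequence $0\to M\to B\to M'\to 0$ with $B\in T^{\perp}$ and $M'\in{}^{\perp}(T^{\perp})$. The long exact $\Ext(-,Y)$-sequence for $Y\in T^{\perp}$, using $M,M'\in{}^{\perp}(T^{\perp})$, forces $B\in{}^{\perp}(T^{\perp})$, hence $B\in{}^{\perp}(T^{\perp})\cap T^{\perp}=\add T$; I rename it $T^0$. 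Iterating produces sequences $0\to M^{(k)}\to T^k\to M^{(k+1)}\to 0$ with $T^k\in\add T$, $M^{(k)}\in{}^{\perp}(T^{\perp})$ and $M^{(0)}=M$. Applying $\Ext(T,-)$ and using $\Ext^{i}(T,T^k)=0$ for $i\geq 1$ gives the shift $\Ext^i(T,M^{(k)})\cong\Ext^{i+1}(T,M^{(k-1)})\cong\cdots\cong\Ext^{i+k}(T,M)$ for all $i\geq 1$. Because $\operatorname{pd}(T)=d$, the right-hand side vanishes once $i+k>d$; in particular at $k=d$ one gets $\Ext^i(T,M^{(d)})=0$ for all $i\geq 1$, so $M^{(d)}\in T^{\perp}\cap{}^{\perp}(T^{\perp})=\add T$. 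Splicing the first $d$ sequences then yields the finite coresolution $0\to M\to T^0\to\cdots\to T^{d-1}\to M^{(d)}\to 0$ with all terms in $\add T$, proving $M\in\underline{\add T}$.

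Finally, for (2) I would pass to the opposite algebra: if $U$ is a cotilting $A$-module then $DU$ is a tilting $A^{op}$-module, and $\Ext^i_A(X,Y)\cong\Ext^i_{A^{op}}(DY,DX)$ translates ${}^{\perp}U$ over $A$ into $(DU)^{\perp}$ over $A^{op}$ and $({}^{\perp}U)^{\perp}$ over $A$ into ${}^{\perp}((DU)^{\perp})$ over $A^{op}$, while $D$ carries $\add(DU)$-coresolutions to $\add U$-resolutions. Applying (1) to $DU$ and dualising the resulting coresolution gives $({}^{\perp}U)^{\perp}=\overline{\add U}$. I expect the genuine obstacle to lie entirely in the hard inclusion of (1), specifically in producing the short exact sequence $0\to M\to T^0\to M'\to 0$ with $T^0\in\add T$ and $M'\in{}^{\perp}(T^{\perp})$: this is the approximation-theoretic input resting on the functorial finiteness of $\add T$ and on the identity ${}^{\perp}(T^{\perp})\cap T^{\perp}=\add T$, after which the $\Ext$-shift makes the coresolution stop after exactly $\operatorname{pd}(T)$ steps and everything else is formal.
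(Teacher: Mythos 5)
Your proposal is correct, and it is essentially the argument that underlies the source the paper points to: the paper itself gives no proof of this proposition, but simply cites \cite{Rei} (Proposition 1.17 and the remarks following it), where the result is obtained from Auslander--Reiten/Auslander--Buchweitz approximation theory for the cotorsion pair $({}^{\perp}(T^{\perp}),T^{\perp})$. Your easy inclusion $\underline{\add T}\subseteq{}^{\perp}(T^{\perp})$ by splicing and downward induction is exactly right, your $\Ext$-shift $\Ext^{i}(T,M^{(k)})\cong\Ext^{i+k}(T,M)$ correctly forces termination of the coresolution after $\operatorname{pd}(T)$ steps, and the dualization via $D$ for the cotilting statement is standard and sound. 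The only thing to be clear-eyed about is where the real weight sits: the two facts you import as ``standard tilting theory'' --- the existence of a special preenvelope $0\to M\to B\to M'\to 0$ with $B\in T^{\perp}$ and $M'\in{}^{\perp}(T^{\perp})$, and the identity $T^{\perp}\cap{}^{\perp}(T^{\perp})=\add T$ (i.e.\ the Ext-projective objects of $T^{\perp}$ are exactly $\add T$, which itself needs that every object of $T^{\perp}$ admits a short exact sequence $0\to Y'\to T_0\to Y\to 0$ with $T_0\in\add T$, $Y'\in T^{\perp}$) --- are precisely the content of the approximation theory developed in the cited reference, resting on covariant finiteness of $T^{\perp}$ and Wakamatsu's lemma rather than merely on functorial finiteness of $\add T$ as you suggest at the end. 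So your write-up is a correct reduction of the proposition to those two standard theorems, which is the same route as the literature; it is not a self-contained proof, but neither is the paper's, and you have identified the genuine pivot points accurately.
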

\begin{proof}
See \cite{Rei}, Proposition 1.17. and below that proposition.
\end{proof}

We will use the following construction of \cite{CheKoe} corollary 5.4., to get examples of gendo-symmetric algebras with dominant dimension equal to their Gorenstein dimension:
\begin{proposition}\label{chekoeres}
Let $A$ be a symmetric algebra and $M=A \oplus N$ a generator of $mod-A$ such that $n+1=\inf \{ i \geq 1 | Ext^{i}(M,M) \neq 0 \}< \infty$ and $\Omega^{n+2}(N)=N$. Then $End_A(M)$ has dominant dimension equal to Gorenstein dimension equal to $n+2$.  
\end{proposition}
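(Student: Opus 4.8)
The plan is to prove the two asserted equalities separately: the dominant dimension is immediate from Mueller's theorem, while the Gorenstein dimension is where the periodicity hypothesis $\Omega^{n+2}(N)=N$ does the work.

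First I would record that $M$ is a generator-cogenerator. Since $A$ is symmetric we have $A\cong D(A)$ as $A$-modules, so the regular module is injective and hence a cogenerator; as $A$ is a direct summand of $M=A\oplus N$, the module $M$ is a generator-cogenerator of mod-$A$. Mueller's theorem \ref{Mueller} then gives $domdim(B)=\inf\{i\geq 1\mid Ext^i(M,M)\neq 0\}+1=(n+1)+1=n+2$, and since $n\geq 0$ this is $\geq 2$.

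For the Gorenstein dimension I would aim to apply the characterisation \ref{gorensteinchara}: it suffices to show that $B$ has finite global Gorenstein projective dimension equal to $n+2$, equivalently that $GProj(B)=\Omega^{n+2}(B-mod)$. Because $domdim(B)=n+2$, Theorem \ref{marvil}(1) identifies $\Omega^{n+2}(B-mod)=Dom_{n+2}(B)$, so the whole problem reduces to the equality $GProj(B)=Dom_{n+2}(B)$. One inclusion is formal: a Gorenstein projective module sits inside a totally acyclic complex of projectives, hence is an $i$-th syzygy for every $i$, and in particular lies in $\Omega^{n+2}(B-mod)=Dom_{n+2}(B)$; thus $GProj(B)\subseteq Dom_{n+2}(B)$. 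The content is the reverse inclusion $Dom_{n+2}(B)\subseteq GProj(B)$, and this is where $\Omega^{n+2}(N)=N$ enters. Over the symmetric algebra $A$ this periodicity means that $N$, and hence $M$ up to projective summands, lies on a doubly infinite totally acyclic complex of projective-injective $A$-modules. My plan is to transport this periodic structure through the functor $F=Hom_A(M,-)$, which identifies $add(M)$ with the projective $B$-modules and sends injective $A$-modules (that is, $add(A)$) to projective-injective $B$-modules, together with its dual on the injective side supplied by $A\cong D(A)$, in order to continue the minimal projective $B$-resolution of a given $Y\in Dom_{n+2}(B)$ leftward into a complete resolution and to verify $Ext^i_B(Y,B)=0$ for all $i\geq 1$ together with the companion conditions defining Gorenstein projectivity. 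Granting this, $GProj(B)=Dom_{n+2}(B)=\Omega^{n+2}(B-mod)$, so \ref{gorensteinchara} shows $B$ is Gorenstein of Gorenstein dimension $n+2$; combined with the first step this yields $Gordim(B)=domdim(B)=n+2$.

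The main obstacle is precisely this reverse inclusion. The naive idea of simply applying $F$ to the periodic $A$-resolution fails, because $Hom_A(M,-)$ is not exact exactly in the degrees where $Ext^{\geq n+1}(M,M)\neq 0$ -- which by periodicity recur with period $n+2$ -- so the image complex is not acyclic and $F(N)$ is not itself the Gorenstein projective object one wants. Instead one must show that the syzygy classes stabilise, i.e. $Dom_{n+2}(B)=Dom_{n+3}(B)=\cdots$, using the periodicity to splice the minimal projective resolution of any $Y\in Dom_{n+2}(B)$ into a totally acyclic complex. Controlling this stabilisation past the dominant dimension, where Theorem \ref{marvil} no longer applies directly, is the technical heart of the argument.
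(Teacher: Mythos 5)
Your first step is correct and complete: since $A$ is symmetric, $M=A\oplus N$ is a generator-cogenerator, and Mueller's theorem \ref{Mueller} gives $domdim(End_A(M))=(n+1)+1=n+2$. Be aware, however, that the paper itself offers no proof of this proposition: it is quoted from \cite{CheKoe}, Corollary 5.4. So the entire substance that a self-contained proof must supply is the Gorenstein half, and that is exactly where your proposal stops being a proof. You correctly reduce the claim to the inclusion $Dom_{n+2}(B)\subseteq GProj(B)$ for $B=End_A(M)$ (the reverse inclusion $GProj(B)\subseteq \Omega^{n+2}(B-mod)=Dom_{n+2}(B)$ is indeed formal from \ref{marvil}), but you never prove it: you write ``granting this'', and your final paragraph concedes that the naive idea of pushing the periodic $A$-resolution of $N$ through $Hom_A(M,-)$ fails and that the needed stabilisation is ``the technical heart''. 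Identifying the obstacle is not overcoming it. Concretely, what has to be shown is that the cosyzygy $\Omega_B^{-(n+2)}(B)$ is injective (on both sides), i.e.\ $injdim(B_B)=injdim(_BB)=n+2$; the difficulty is that the identification $\Omega_B^{-j}(B)\cong Hom_A(M,\Omega_A^{-j}(M))$, which holds for $j\leq n$ by the vanishing of $Ext^{j}(M,M)$ in that range, breaks down at $j=n+1$ precisely because $Ext^{n+1}(M,M)\neq 0$, so the periodicity $\Omega^{n+2}(N)=N$ cannot be transported term by term. Converting that periodicity into injectivity of the $(n+2)$-nd cosyzygy is the content of the cited result of Chen and Koenig, and no argument for it appears in your text. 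As it stands you have proved only $domdim(B)=n+2$.

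There is also a smaller inaccuracy worth flagging: even if you had established $GProj(B)=\Omega^{n+2}(B-mod)$, this only yields $Gordim(B)\leq n+2$, because the equality $GProj=\Omega^{i}(B-mod)$ holds for \emph{every} $i$ greater than or equal to the Gorenstein dimension (every Gorenstein projective module is an $i$-th syzygy for all $i$), so condition (3) of \ref{gorensteinchara} cannot distinguish $n+2$ from smaller values. To pin down the exact value you must add the standard fact, used in the paper inside the proof of \ref{projinjdimcrit}, that the dominant dimension of a non-selfinjective algebra is bounded above by its injective dimension; note that $Ext^{n+1}(M,M)\neq 0$ forces $N\notin add(A)$, so $B$ is not selfinjective, whence $Gordim(B)=injdim(B)\geq domdim(B)=n+2$. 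This repair is routine; the missing inclusion above is not.
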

Note that the previous proposition is always satisfied in case $M=A \oplus N$, where $N$ is a 2-periodic module.

\section{Auslander-Gorenstein algebras, which are standardly stratified}
Assume always, that $A$ is an algebra with dominant dimension at least one. Then there is a minimal faithful injective-projective right module $eA$ and a minimal faithful injective-projective left module $Af$ and we fix this notation and idempotents $e$ and $f$. We also fix $t$ for the number of simple modules in $A$. Since we are interested mainly in homological questions, we can assume that all our modules and algebras are basic.

\begin{lemma}
Let $A$ be a higher Auslander-Gorenstein algebra of Gorenstein dimension $r$. Then $GProj=Dom_r$.
\end{lemma}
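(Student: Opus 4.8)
The plan is to realise both $GProj$ and $Dom_r$ as one and the same syzygy subcategory, namely $\Omega^{r}(A-mod)$, and then simply compare. The two ingredients are already at hand: the characterisation of Gorenstein projectives via syzygies (Theorem \ref{gorensteinchara}) on the Gorenstein side, and the identification of $Dom_i$ with $\Omega^{i}(A-mod)$ (Theorem \ref{marvil}) on the dominant-dimension side. The whole point is that for an Auslander-Gorenstein algebra the exponents entering these two descriptions coincide.

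First I would record that, by the very definition of an Auslander-Gorenstein algebra, the Gorenstein dimension equals the dominant dimension, so both invariants equal $r$ and in particular $r \geq 2 \geq 1$. Since $A$ is then Gorenstein of Gorenstein dimension $r$, the equivalence of conditions (1) and (3) in Theorem \ref{gorensteinchara} yields $GProj = \Omega^{r}(A-mod)$. Next I would apply Theorem \ref{marvil}(1) to $A$, whose dominant dimension is $r \geq 1$; taking $i = r$ (so that the hypothesis $i \leq r$ holds, with equality) gives $Dom_{r} = \Omega^{r}(A-mod)$. Combining the two identities produces $GProj = \Omega^{r}(A-mod) = Dom_{r}$, which is the assertion.

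The only thing demanding attention — and in fact the conceptual content of the lemma — is the matching of the two exponents: the syzygy power coming from Theorem \ref{gorensteinchara} is the Gorenstein dimension, whereas the one coming from Theorem \ref{marvil} is the dominant dimension, and it is precisely the defining coincidence of these two numbers for an Auslander-Gorenstein algebra that forces the two descriptions of $\Omega^{r}(A-mod)$ to agree. Beyond checking that the hypotheses of both quoted theorems are met (Gorensteinness of dimension $r$, and dominant dimension $r \geq 1$ with $i = r$), there is no genuine obstacle; the statement is a formal consequence of the two results once this identification is in place.
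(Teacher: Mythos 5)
Your proof is correct and follows exactly the paper's own argument: both identify $Dom_r$ with $\Omega^{r}(A\text{-mod})$ via Theorem \ref{marvil} and $GProj$ with $\Omega^{r}(A\text{-mod})$ via Theorem \ref{gorensteinchara}, using the defining coincidence of Gorenstein and dominant dimension to match the exponents.
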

\begin{proof}
By definition, $A$ has dominant dimension $r$.
Use \ref{marvil} to see that for $i \leq r$, $Dom_i=\Omega^{i}(mod-A)$. On the other hand an algebra $A$ is Gorenstein iff $GProj=\Omega^{r}(mod-A)$ by \ref{gorensteinchara}. Combining those two results, the lemma follows.
\end{proof}

\begin{theorem}
\label{domgorproequ}
Let $A$ be a Auslander-Gorenstein algebra of Gorenstein dimension $r$. 
\begin{enumerate}
\item $Dom_{r-j}=GProj_j$ for all $j=0,1,..,r$.
\item $Codom_{r-j}=GInj_j$ for all $j=0,1,..,r$.
\end{enumerate}
\end{theorem}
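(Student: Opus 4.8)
The plan is to establish part (1) directly and then obtain part (2) by duality. The backbone is the chain of identifications already at our disposal: by the previous lemma together with \ref{gorensteinchara} we have $GProj = Dom_r = \Omega^{r}(A\text{-mod})$, while \ref{marvil} gives $Dom_{r-j} = \Omega^{r-j}(A\text{-mod})$ for every $j$ with $0 \le j \le r$ (recall that $A$ has dominant dimension $r$ by the definition of an Auslander-Gorenstein algebra). The first reduction I would carry out is the equivalence
\[
GPdim(M) \le j \iff \Omega^{j}(M) \text{ is Gorenstein projective},
\]
which follows from \ref{gorprores}: feeding the syzygy sequence $0 \to \Omega^{j}(M) \to P_{j-1} \to \cdots \to P_0 \to M \to 0$ from a minimal projective resolution into that theorem, the projective terms are Gorenstein projective, so the implication (2)$\Rightarrow$(1) gives one direction and (1)$\Rightarrow$(3) the other. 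Thus the theorem reduces to showing that $\Omega^{j}(M)$ is Gorenstein projective if and only if $M$ has dominant dimension at least $r-j$.

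For the inclusion $Dom_{r-j} \subseteq GProj_j$ I would shift syzygies. If $M \in Dom_{r-j} = \Omega^{r-j}(A\text{-mod})$ then either $M$ is projective, whence $GPdim(M)=0 \le j$, or $M \cong \Omega^{r-j}(N)$ for some $N$; in the latter case $\Omega^{j}(M) \cong \Omega^{r}(N) \in \Omega^{r}(A\text{-mod}) = GProj$, so $\Omega^{j}(M)$ is Gorenstein projective and $GPdim(M) \le j$ by the reduction above. This direction is essentially formal.

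The reverse inclusion $GProj_j \subseteq Dom_{r-j}$ is where the real work lies, and I expect it to be the main obstacle (the case $j=0$ here being exactly the previous lemma). Given $M$ with $GPdim(M) \le j$, the reduction shows $\Omega^{j}(M)$ is Gorenstein projective, hence lies in $Dom_r$ and so has dominant dimension at least $r$. I would then apply \ref{xidom} to $0 \to \Omega^{j}(M) \to P_{j-1} \to \cdots \to P_0 \to M \to 0$, taking $Y_{-1}=\Omega^{j}(M)$, $Y_0 = P_{j-1}, \dots, Y_{j-1}=P_0$, $Y_j = M$ and $m=j$. Here I use two estimates: $domdim(\Omega^{j}(M)) \ge r$, and $domdim(P_i) \ge r$ for each projective $P_i$, since $domdim(A)=r$ forces every projective to have dominant dimension at least $r$. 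The term $k=-1$ then contributes $r-1$ and the terms $0 \le k \le j-1$ contribute at least $r$, so \ref{xidom} yields $domdim(M) \ge (r-1)-j+1 = r-j$, i.e.\ $M \in Dom_{r-j}$. The delicate point is precisely this bookkeeping: one must verify that the minimum in the estimate is realized by the Gorenstein projective syzygy and that the projective middle terms never lower the resulting bound.

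Finally, part (2) follows by applying part (1) to $A^{op}$ and transporting along the duality $D = Hom_K(-,K)$. Since $A$ is Gorenstein of dimension $r$, its opposite is again Gorenstein of dimension $r$, and $domdim(A) = domdim(A^{op}) = r$, so $A^{op}$ is once more Auslander-Gorenstein of Gorenstein dimension $r$. Under $D$ one has $M \in Codom_{i}(A)$ iff $DM \in Dom_i(A^{op})$, and $M \in GInj_j(A)$ iff $DM \in GProj_j(A^{op})$, so the equality $Dom_{r-j}(A^{op}) = GProj_j(A^{op})$ from part (1) dualizes to $Codom_{r-j}(A) = GInj_j(A)$.
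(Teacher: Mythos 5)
Your proof is correct, and one half of it takes a genuinely different route from the paper's. For the inclusion $GProj_j \subseteq Dom_{r-j}$ you do essentially what the paper does: apply Lemma \ref{xidom} to an exact sequence whose left-hand terms all have dominant dimension at least $r$ (the paper feeds in the minimal Gorenstein projective resolution $0 \rightarrow P_j \rightarrow ... \rightarrow P_0 \rightarrow M \rightarrow 0$, you feed in the syzygy sequence with Gorenstein projective kernel $\Omega^{j}(M)$ and projective middle terms), and the bookkeeping gives the same bound $r-j$ in both cases. The difference lies in the inclusion $Dom_{r-j} \subseteq GProj_j$: the paper argues by contradiction, splicing the minimal Gorenstein projective resolution of $M$ with the first $r-j$ (projective) terms of its minimal injective coresolution to produce a cosyzygy of Gorenstein projective dimension exceeding $r$, contradicting the fact that the global Gorenstein projective dimension is $r$ by \ref{gorensteinchara}. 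You instead make this inclusion purely formal: by \ref{marvil}, $M \in Dom_{r-j}=\Omega^{r-j}(A-mod)$, hence $\Omega^{j}(M) \cong \Omega^{r}(N)$ lies in $\Omega^{r}(A-mod)=GProj$, and the standard equivalence $GPdim(M) \leq j \Leftrightarrow \Omega^{j}(M) \in GProj$ (your reduction via \ref{gorprores}) finishes the argument. Your route is arguably cleaner: it avoids both the contradiction and the splicing, and it makes transparent that the theorem is exactly the composition of the two syzygy identifications $Dom_i = \Omega^{i}(A-mod)$ and $GProj = \Omega^{r}(A-mod)$; the paper's argument, on the other hand, only invokes Martinez-Villa's theorem through the previous lemma and otherwise works directly with resolutions. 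Your explicit verification of part (2) via $A^{op}$ and the duality $D$ is a correct spelling-out of what the paper compresses into the single sentence that the proof of 2 is dual.
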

\begin{proof}
We only prove 1, since the proof of 2. is dual. \newline 
The case $j=0$ was shown in the previous lemma.
Assume $M \in Dom_{r-j}$ for $j \geq 1$ and assume $M$ is not Gorenstein-projective (if $M$ is Gorenstein projective, $M \in GProj_j$ is clear). Look at the beginning of a minimal injective resolution of $M$: \newline
$0 \rightarrow M \rightarrow I_0 \rightarrow ... \rightarrow I_{r-j-1} \rightarrow K \rightarrow 0$ 
and assume that $M$ has Gorenstein projective dimension strictly larger than $j$. Then $K$ has Gorenstein projective dimension at least $r+1$ as the following sequence shows, where the $P_i$ come from a minimal Gorenstein-projective resolution of $M$: \newline
$... \rightarrow P_{j+1} \rightarrow P_{j} \rightarrow ... \rightarrow P_0 \rightarrow I_0 \rightarrow ... \rightarrow I_{(r-j)-1} \rightarrow K \rightarrow 0.$ \newline
Here the map $P_0 \rightarrow I_0$ is the composition of $P_0 \rightarrow M$ and $M \rightarrow I_0$.
But this contradicts the fact that $A$ has finite global Gorenstein projective dimension $r$, by \hyperref[gorensteinchara]{ \ref*{gorensteinchara}}, and so $M$ has Gorenstein projective dimension at most $j$.
Now assume that $M \in GProj_j$. Then take the minimal Gorenstein-projective resolution of $M$:
$0 \rightarrow P_j \rightarrow P_{j-1} \rightarrow ... \rightarrow P_0 \rightarrow M \rightarrow 0$.
By \hyperref[xidom]{ \ref*{xidom}}, $domdim(M) \geq r-1-j+1=r-j$, since every Gorenstein projective module has dominant dimension at least $r$, by the previous lemma.
Thus $M \in Dom_{r-j}$. 
\end{proof}

In our setting the previous theorem can be seen as a generalisation of theorem 4.7. of \cite{IyaSol}, which corresponds to the case j=0, by noting that the category $M^{l-2}$ is equivalent to $Dom_l$ via the functor $Hom_B(M,-)$ for any $l \geq 2$, when $A \cong End_B(M)$ for some algebra $B$ with generator-cogenerator $M$, see for example \cite{APT} proposition 3.7.

\begin{proposition}\label{projinjdimcrit}
Let $A$ be a Auslander-Gorenstein algebra of Gorenstein dimension $r$ and let $X$ be a module of projective dimension $i$ and injective dimension $r-i$. Then $X$ has exactly dominant dimension $r-i$ and codominant dimension $i$ and is contained in $add(\Omega^{r-i}(D(A)))=add(\Omega^{-i}(A))$. Conversly, every module in $add(\Omega^{r-i}(D(A)))=add(\Omega^{-i}(A))$ has this property.

\end{proposition}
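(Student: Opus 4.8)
\emph{The plan} is to read all four invariants off the two canonical minimal resolutions, using Theorem \ref{domgorproequ} as a dictionary between homological and dominant dimensions. Write the minimal injective coresolution of $A$ as $0\to A\to I^0\to\cdots\to I^r\to 0$; since $A$ is Auslander-Gorenstein of dominant dimension $r$, the terms $I^0,\dots,I^{r-1}$ are projective-injective. Dually, using that the codominant dimension of $A$ also equals $r$, the minimal projective resolution of $D(A)$ is $0\to P_r\to\cdots\to P_0\to D(A)\to 0$ with $P_0,\dots,P_{r-1}$ projective-injective. A connected segment of a minimal (co)resolution is again minimal, so I may freely use the truncations $0\to\Omega^{-i}(A)\to I^i\to\cdots\to I^r\to 0$ and $0\to P_r\to\cdots\to P_{r-i}\to\Omega^{r-i}(D(A))\to 0$ as the minimal (co)resolutions of the (co)syzygies occurring.

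\emph{Forward direction.} Given $X$ with $\mathrm{pd}(X)=i$ and $\mathrm{injdim}(X)=r-i$, finiteness gives $GPdim(X)=\mathrm{pd}(X)=i$ and, dually, that the Gorenstein injective dimension of $X$ equals $r-i$. Then Theorem \ref{domgorproequ}(1) yields $X\in GProj_i=Dom_{r-i}$, so $\mathrm{domdim}(X)\ge r-i$; and for $1\le i\le r$, if $\mathrm{domdim}(X)\ge r-i+1$ then $X\in Dom_{r-(i-1)}=GProj_{i-1}$, forcing $GPdim(X)\le i-1$, a contradiction. Hence $\mathrm{domdim}(X)=r-i$ exactly (the boundary case $i=0$ is treated below), and dually $\mathrm{codomdim}(X)=i$ via Theorem \ref{domgorproequ}(2) (with the case $i=r$ deferred).

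\emph{Containments and the identity.} Because $\mathrm{domdim}(X)=r-i=\mathrm{injdim}(X)$, the minimal coresolution $0\to X\to I^0_X\to\cdots\to I^{r-i}_X\to 0$ has its first $r-i$ terms projective-injective and stops at step $r-i$; reading it as an exact sequence with projective middle terms exhibits $X\cong\Omega^{r-i}(E)$ for the injective module $E=I^{r-i}_X$, so $X\in add(\Omega^{r-i}(D(A)))$. Symmetrically, since $\mathrm{codomdim}(X)=i=\mathrm{pd}(X)$, the minimal projective resolution $0\to P_i\to\cdots\to P_0\to X\to 0$ has $P_0,\dots,P_{i-1}$ injective, so reading it as an injective coresolution of $P_i$ gives $X\cong\Omega^{-i}(P_i)$ with $P_i$ projective, i.e. $X\in add(\Omega^{-i}(A))$. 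For the converse I would apply the same computations to the canonical modules: the coresolution above gives $\mathrm{injdim}(\Omega^{-i}(A))=r-i$ and (its first $r-i$ terms being projective-injective) $\mathrm{domdim}(\Omega^{-i}(A))=r-i$, whence $\mathrm{pd}(\Omega^{-i}(A))=i$ by Theorem \ref{domgorproequ}; likewise for $\Omega^{r-i}(D(A))$. Thus each canonical module satisfies the hypotheses of the forward direction, and the two containments applied to them yield both the stated invariants for every summand and the identity $add(\Omega^{-i}(A))=add(\Omega^{r-i}(D(A)))$.

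\emph{The main obstacle} is the exactness at the boundary $i=0$ (projective $X$) and, dually, $i=r$, where Theorem \ref{domgorproequ} gives no contradiction, since a priori a projective module of finite injective dimension could have infinite dominant dimension; this is the same content as the claim that the top term $I^r$ of the minimal injective coresolution of $A$ carries no projective-injective summand. I would settle this by minimality: if an indecomposable projective-injective $Q$ were a summand of $I^r=\mathrm{coker}(I^{r-2}\to I^{r-1})$, then the composite surjection $I^{r-1}\twoheadrightarrow I^r\to Q$ splits (as $Q$ is projective), producing a submodule of $I^{r-1}$ isomorphic to $Q$ meeting $\ker(I^{r-1}\to I^r)=\Omega^{-(r-1)}(A)$ trivially, contradicting the essentiality of $\Omega^{-(r-1)}(A)\hookrightarrow I^{r-1}$ forced by minimality. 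Hence $I^r$ (and dually $P_r$) has no projective-injective summand; since the coresolution of a projective $X$ is a summand of that of a power of $A$, its $(r-i)$-th term is then nonzero and non-projective, pinning $\mathrm{domdim}(X)$ to exactly $r-i$ and closing the boundary cases.
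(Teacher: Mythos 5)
Your proof is correct at the same level of rigor as the paper's, and it rests on the same two pillars: Theorem \ref{domgorproequ} to convert finite projective dimension into a lower bound on dominant dimension, and the reading of spliced/truncated minimal resolutions to obtain the containments. The differences are local but worth recording. First, to pin $domdim(X)$ down to exactly $r-i$, the paper invokes in one line the standard fact that the dominant dimension of a non-injective module is at most its (positive, finite) injective dimension; you instead run Theorem \ref{domgorproequ} backwards in the range $i\geq 1$ (a larger dominant dimension would force $GPdim(X)\leq i-1$, contradicting $GPdim(X)=pd(X)=i$) and, at the boundaries $i=0$ and $i=r$, give an essentiality/splitting argument showing that the last term $I^r$ of the minimal coresolution of $A$ has no projective summand. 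That boundary argument is precisely a proof of the special case of the fact the paper cites, so your version is more self-contained at the cost of length. Second, for the converse and the equality $add(\Omega^{r-i}(D(A)))=add(\Omega^{-i}(A))$, the paper reads everything off the coresolution $0\rightarrow A\rightarrow I_0\rightarrow\cdots\rightarrow I_{r-1}\rightarrow I_r\rightarrow 0$ (loosely writing $D(A)$ for the last term) and declares the converse obvious; you bootstrap instead, checking that the canonical modules $\Omega^{-i}(A)$ and $\Omega^{r-i}(D(A))$ themselves satisfy the hypotheses of the forward direction and then applying the forward containments to each of them. This is a cleaner route that in particular avoids the paper's loose identification of $I_r$ with $D(A)$. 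Two caveats, neither of which puts you below the paper's own standard: in the step where $domdim(\Omega^{-i}(A))=r-i$ yields $pd(\Omega^{-i}(A))=i$, Theorem \ref{domgorproequ} only gives $GPdim(\Omega^{-i}(A))=i$, and to upgrade this to $pd=i$ you need $pd(\Omega^{-i}(A))<\infty$ --- which is immediate because $0\rightarrow A\rightarrow I^0\rightarrow\cdots\rightarrow I^{i-1}\rightarrow\Omega^{-i}(A)\rightarrow 0$ is already a finite projective resolution, but this should be said; and both you and the paper silently treat an injective coresolution read backwards as if it were a minimal projective resolution, so the exhibited syzygy agrees with the minimal one only up to projective direct summands (a Schanuel-type argument, equally absent from the paper's proof, is needed to finish this point cleanly).
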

\begin{proof}
Since $X$ having projective dimension $i$ implies that $X \in GProj_i$, by the previous theorem $X$ has dominant dimension at least $r-i$. But the dominant dimension is always smaller than or equal to the positive injective dimension and so the dominant dimension of $X$ equals $r-i$. Dually $X$ has codominant dimension $i$. There exists the following exact sequence coming from a minimal projective resolution $(P_i)_i$ of $X$ and a minimal injective resolution $(I_i)_i$ of $X$: \newline
$0 \rightarrow P_i \rightarrow ... \rightarrow P_0 \rightarrow X \rightarrow I_0 \rightarrow ... \rightarrow I_{r-i-1} \rightarrow I_{r-i} \rightarrow 0$. The module $I_{r-i}$ is not projective, since the dominant dimension of $X$ is exactly $r-i$. Thus $I_{r-i}$ has a nonprojective injective indecomposable summand and $X \in add(\Omega^{r-i}(D(A)))$ is clear, since $I_0, I_1 ,..., I_{r-i-1}$ are projective. The equality $add(\Omega^{r-i}(D(A)))=add(\Omega^{-i}(A))$ follows by looking at the following  injective resolution of $A$, which has the form $0 \rightarrow A \rightarrow I_0 \rightarrow ... \rightarrow I_{r-1} \rightarrow D(A) \rightarrow 0$ and all terms $I_i$ are projective, since $A$ is a Auslander-Gorenstein algebra. That every module in $add(\Omega^{r-i}(D(A)))=add(\Omega^{-i}(A))$ has projective dimension $i$ and injective dimension $r-i$ is obvious.
\end{proof}

\begin{corollary}
\label{tiltchar}
Let $A$ be a Auslander-Gorenstein algebra of Gorenstein dimension $r$. Let $T$ be a basic tilting-cotilting module of projective dimension $i \geq 1$ and of injective dimension $r-i$. Then $T$ has dominant dimension $r-i$ and codominant dimension $i$. Furthermore $T \cong eA \oplus \Omega^{r-i}(D(A)) \cong eA \oplus \Omega^{-i}(A)$.
Thus $T$ is the unique basic tilting-cotilting module in the categories $Dom_{r-i} \cap Codom_i$, $Proj_i \cap Codom_i$ and $Inj_{r-i} \cap Dom_{r-i}$.
\end{corollary}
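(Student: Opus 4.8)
The plan is to read off the statements about dominant and codominant dimensions directly from Proposition \ref{projinjdimcrit}, and then to pin down the isomorphism type of $T$ and establish uniqueness by combining the membership $T \in \add(\Omega^{-i}(A))$ with the standard fact that a basic tilting module has exactly $t$ pairwise non-isomorphic indecomposable summands. Since $T$ has projective dimension $i$ and injective dimension $r-i$ by hypothesis, Proposition \ref{projinjdimcrit} applies verbatim: $T$ has dominant dimension exactly $r-i$, codominant dimension exactly $i$, and $T \in \add(\Omega^{r-i}(D(A)))=\add(\Omega^{-i}(A))$. In particular $T$ already lies in each of the three categories $Dom_{r-i}\cap Codom_i$, $Proj_i \cap Codom_i$ and $Inj_{r-i}\cap Dom_{r-i}$, so the real content of the final sentence is \emph{uniqueness}.

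I would first show that the minimal faithful projective-injective module $eA$ is a direct summand of $T$. As $eA$ is projective, $\Ext^{\geq 1}(eA,-)=0$, so $eA \in {}^{\perp}(T^{\perp})$; since $T$ is tilting, Proposition \ref{reiprop}(1) gives $eA \in \underline{\add T}$, that is, an exact sequence $0 \to eA \to C^0 \to \cdots \to C^m \to 0$ with all $C^j \in \add T$. Because $eA$ is injective the monomorphism $eA \hookrightarrow C^0$ splits, whence $eA$ is a summand of $C^0$ and so $eA \in \add T$. Combining $T \in \add(\Omega^{-i}(A))$ with $eA \mid T$ yields $\add T \subseteq \add(eA \oplus \Omega^{-i}(A))$; since $T$ is basic tilting with $t$ indecomposable summands, it then suffices to check that $eA \oplus \Omega^{-i}(A)$ also has exactly $t$ indecomposable summands, which forces $\add T = \add(eA\oplus \Omega^{-i}(A))$ and hence $T \cong eA \oplus \Omega^{-i}(A) \cong eA \oplus \Omega^{r-i}(D(A))$ as both are basic.

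For uniqueness, let $U$ be any basic tilting-cotilting module lying in one of the three categories. Each membership forces $\operatorname{pd}(U)\le i$ and $injdim(U)\le r-i$: indeed $U \in Proj_i$ (resp.\ $U\in Inj_{r-i}$) gives the bound directly, while $U \in Dom_{r-i}$ (resp.\ $U\in Codom_i$) gives, via Theorem \ref{domgorproequ}, $GPdim(U)\le i$ (resp.\ $GInjdim(U)\le r-i$), and for a module of finite projective (resp.\ injective) dimension the Gorenstein dimension agrees with the ordinary one, so the same bound follows. On the other hand, coresolving $A$ by $\add U$, i.e.\ taking a tilting coresolution $0 \to A \to U^0 \to \cdots \to U^s \to 0$ with $s=\operatorname{pd}(U)$ and each $U^j \in \add U$, and estimating injective dimensions across the resulting short exact sequences, yields $r = injdim(A) \le \operatorname{pd}(U)+injdim(U)$. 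Hence $\operatorname{pd}(U)=i$ and $injdim(U)=r-i$, so Proposition \ref{projinjdimcrit} and the splitting argument above give $U \in \add(\Omega^{-i}(A))$ and $eA \mid U$; the summand count then yields $U \cong eA \oplus \Omega^{-i}(A)\cong T$.

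The routine steps are the application of Proposition \ref{projinjdimcrit} and the translation of category membership into the inequalities $\operatorname{pd}(U)\le i$, $injdim(U)\le r-i$. The main obstacle I expect is the summand count, equivalently the verification that $eA \oplus \Omega^{-i}(A)$ is \emph{exactly} the tilting-cotilting module with $t$ summands: one must rule out extra indecomposable summands and confirm self-orthogonality of $\Omega^{-i}(A)$, which should be achievable by dimension shifting through the projective-injective terms $I_0,\dots,I_{i-1}$ of the injective coresolution of $A$, but requires care at the boundary degrees.
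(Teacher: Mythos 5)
Your proof takes essentially the same route as the paper's one-sentence argument: apply Proposition \ref{projinjdimcrit} to get the (co)dominant dimensions and $T\in\add(\Omega^{r-i}(D(A)))=\add(\Omega^{-i}(A))$, then pin down $T$ by comparing numbers of indecomposable summands with the number $t$ of simple modules. Beyond that, you correctly fill in two points the paper leaves implicit. First, you prove $eA\in\add T$ via $eA\in{}^{\perp}(T^{\perp})=\underline{\add T}$ (Proposition \ref{reiprop}) and the splitting of the resulting monomorphism out of the injective module $eA$; this is genuinely needed, because the minimal cosyzygy $\Omega^{-i}(A)$ has no reason to contain $eA$, so the membership coming from Proposition \ref{projinjdimcrit} does not by itself account for the projective-injective part of $T$. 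Second, you supply an actual uniqueness argument: membership of a basic tilting-cotilting module $U$ in any of the three categories gives $\operatorname{pd}(U)\leq i$ and $\operatorname{injdim}(U)\leq r-i$ (via Theorem \ref{domgorproequ} together with the fact that Gorenstein projective or injective dimension agrees with the ordinary one when the latter is finite), while a coresolution of $A$ in $\add U$ gives $r\leq\operatorname{pd}(U)+\operatorname{injdim}(U)$, forcing equality; the paper never spells this reduction out. The one step you flag as open --- that $eA\oplus\Omega^{-i}(A)$ has exactly $t$ indecomposable summands, i.e.\ that $\Omega^{-i}$ does not split the $t-s$ indecomposable non-injective projectives into more pieces ($s$ denoting the number of indecomposable summands of $eA$) --- is precisely the step the paper itself asserts without justification, so your proposal matches the paper's level of rigor at that point and exceeds it elsewhere.
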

\begin{proof}
The corollary follows from the previous proposition and noting that the number of indecomposable summands of a basic tilting module is always equal to the number of simple $A$-modules, which is the same as the number of indecomposable summands of the module $eA \oplus \Omega^{r-i}(D(A)) \cong eA \oplus \Omega^{-i}(A)$.
\end{proof}
\begin{proposition}
\label{tiltorth}
Let $A$ be a Auslander-Gorenstein algebra of Gorenstein dimension $r$ and $T$ a basic tilting-colting module of projective dimension $i \geq 1$ and injective dimension $r-i$.
Then $^{\perp}T  = GProj_i = Dom_{r-i}, \underline{add(T)}=^{\perp}(T^{\perp})=Proj_i , \overline{add(T)}=(^{\perp}(T))^{\perp}=Inj_{r-i}$ and $T^{\perp} = GInj_{r-i}=Codom_i$.
\end{proposition}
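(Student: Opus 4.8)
The plan is to establish the four equalities by chaining together the preceding results on Auslander-Gorenstein algebras, the general facts about tilting/cotilting orthogonal categories from \ref{reiprop}, and the dominant-dimension identities from \ref{marvil}. The key observation, already recorded in \ref{tiltchar}, is that $T \cong eA \oplus \Omega^{-i}(A)$, so $T$ decomposes into its projective-injective part $eA$ and the part of projective dimension exactly $i$ and injective dimension exactly $r-i$. I would organize the argument around the equality $GProj_i = Dom_{r-i}$, which is just \ref{domgorproequ}(1) with $j=i$, and then identify both the left and right perpendicular categories with these two already-equal classes.

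First I would treat $^{\perp}T = Dom_{r-i}$. Since $T$ has projective dimension $i$, every module $Y$ with $\operatorname{domdim}(Y) \geq r-i$ lies in $GProj_i$ by \ref{domgorproequ}, so $Y$ has a Gorenstein-projective resolution of length $\leq i$; using that $\operatorname{Ext}^{j}(G,Y)$ vanishes for Gorenstein-projective $G$ against modules of sufficiently high dominant dimension (the Gorenstein-projective summands of $T$ being the projective-injective $eA$, and the remaining summand $\Omega^{-i}(A)$ having projective dimension $i$), one checks $\operatorname{Ext}^{k}(T,Y)=0$ for all $k \geq 1$. For the reverse inclusion I would use that $^{\perp}T$ is closed under the relevant syzygy operations and that $T$ contains a summand in each injective degree up to $r-i$, forcing any $Y \in {}^{\perp}T$ into $Inj_{r-i}^{\perp}$-type conditions that, via \ref{marvil}(2) ($^{\perp}Inj_{r-i}=Dom_{r-i}$ after dualizing appropriately), place $Y$ in $Dom_{r-i}$. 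The dual computation gives $T^{\perp}=Codom_i=GInj_{r-i}$, invoking \ref{domgorproequ}(2).

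Next I would handle $\underline{add(T)} = {}^{\perp}(T^{\perp})$ and $\overline{add(T)}=({}^{\perp}T)^{\perp}$. The two outer equalities here are exactly \ref{reiprop}(1) and (2), valid because $T$ is simultaneously a tilting and a cotilting module on an Auslander-Gorenstein (hence Gorenstein, by \ref{gorcrit}) algebra. It then remains to identify $\underline{add(T)}=Proj_i$ and $\overline{add(T)}=Inj_{r-i}$. For the first, since $T=eA\oplus\Omega^{-i}(A)$ cogenerates an injective coresolution built from its summands, a module $M$ admits a coresolution by $\add(T)$ precisely when $\operatorname{pd}(M)\leq i$; here \ref{projinjdimcrit} supplies the crucial bookkeeping, identifying $\add(\Omega^{-i}(A))$ with the modules of projective dimension $i$ and injective dimension $r-i$, so that the coresolution terminates at the right length exactly for $M \in Proj_i$. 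The equality $\overline{add(T)}=Inj_{r-i}$ follows dually.

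The main obstacle I anticipate is the careful verification of the Ext-vanishing in the inclusion $Dom_{r-i}\subseteq {}^{\perp}T$ and its converse: one must control $\operatorname{Ext}^{k}(T,Y)$ across all degrees $k\geq 1$ simultaneously, not merely the first few, and this requires combining the Gorenstein-projective resolution of $Y$ (length $\leq i$) with the fact that the non-projective-injective summand of $T$ has both finite projective and finite injective dimension summing to $r$. The cleanest route is probably to reduce every Ext-vanishing statement to a dominant- or codominant-dimension inequality via \ref{marvil}(2), so that the two already-proven identities $GProj_i=Dom_{r-i}$ and $GInj_{r-i}=Codom_i$ do the heavy lifting and the tilting-theoretic equalities of \ref{reiprop} close the loop.
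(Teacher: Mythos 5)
Your outline assembles the right ingredients (\ref{tiltchar}, \ref{domgorproequ}, \ref{reiprop}, \ref{marvil}), but the core computation ${}^{\perp}T = GProj_i$ is never actually performed, and the sketch you substitute for it fails on two counts. First, your Ext groups point the wrong way: by the paper's convention ${}^{\perp}T = \{ M \mid \Ext^{l}(M,T)=0 \text{ for all } l \geq 1 \}$, yet throughout you propose to control $\Ext^{k}(T,Y)$, and the vanishing statement you invoke for this (that $\Ext^{j}(G,Y)=0$ for $G$ Gorenstein projective and $Y$ of high dominant dimension) appears nowhere in the paper and is not the relevant fact. What does work --- and is the paper's actual proof --- is plain dimension shifting: for every $l \geq 1$,
\[
\Ext^{l}(M,T) = \Ext^{l}\bigl(M,\Omega^{-i}((1-e)A)\bigr) \cong \Ext^{l+i}\bigl(M,(1-e)A\bigr) \cong \Ext^{l}\bigl(\Omega^{i}(M),(1-e)A\bigr) = \Ext^{l}\bigl(\Omega^{i}(M),A\bigr),
\]
the last equality because $eA$ is projective-injective. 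Since over the Gorenstein algebra $A$ a module is Gorenstein projective if and only if $\Ext^{k}(-,A)=0$ for all $k \geq 1$, this gives ${}^{\perp}T = \{ M \mid \Omega^{i}(M) \in GProj \} = GProj_i$ in one stroke, as a chain of equalities, and $GProj_i = Dom_{r-i}$ is then \ref{domgorproequ}, as you say. If instead you argue through a Gorenstein projective resolution of $Y$, beware that Ext-vanishing in the \emph{first} variable does not follow from an arbitrary such resolution; what you would need is $\Ext^{k}(Y,A)=0$ for $k > GPdim(Y)$, which is again the same shifting argument.

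Second, your reverse inclusion ${}^{\perp}T \subseteq Dom_{r-i}$ is circular: passing from $\Ext^{*}(Y,T)=0$ to $Y \in {}^{\perp}Inj_{r-i}$ requires every module of injective dimension at most $r-i$ to admit a finite $add(T)$-resolution, i.e. $Inj_{r-i} \subseteq \overline{add(T)}$ --- but that is one of the equalities the proposition asserts, so it cannot be assumed. (The paper never faces this issue because its computation is an outright equality, not a pair of inclusions.) The same defect affects your direct identification $\underline{add(T)} = Proj_i$ by ``counting the length of the coresolution'': the inclusion $\underline{add(T)} \subseteq Proj_i$ is an easy induction, but $Proj_i \subseteq \underline{add(T)}$ is precisely what needs proof, and \ref{projinjdimcrit} does not supply it. The clean route, which you only gesture at in your closing paragraph, is the paper's: $\underline{add(T)} = {}^{\perp}(T^{\perp})$ by \ref{reiprop}(1), $T^{\perp} = Codom_i = GInj_{r-i}$ by the dual of the dimension-shift computation, and then ${}^{\perp}(Codom_{i}) = Proj_i$ by \ref{marvil}(2); the equalities $\overline{add(T)} = ({}^{\perp}T)^{\perp} = Inj_{r-i}$ follow dually.
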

\begin{proof}
We use that  $T \cong eA \oplus \Omega^{r-i}(D(A)) \cong eA \oplus \Omega^{-i}(A)$, as was shown in the previous corollary.
$^{\perp}T = \{ M | Ext^{l}(M,T)=0$ for every $l \geq 1 \} = \{ M | Ext^{l}(M,\Omega^{-i}((1-e)A))=0$ for every $l \geq 1 \} =  \{ M | Ext^{l+i}(M,(1-e)A)=Ext^{l}(\Omega^{i}(M),(1-e)A)=0$ for every $l \geq 1 \}$. Now note that any module $Y$ with $Ext^{l}(Y,A)=0$ for every $l \geq 1$ is Gorenstein projective since $A$ has finite Gorenstein dimension. Thus  $\{ M | Ext^{l}(\Omega^{i}(M),(1-e)A)=Ext^{l}(\Omega^{i}(M),A) =0$ for every $l \geq 1 \}= \{ M | \Omega^{i}(M)$ is Gorenstein-projective $ \} = GProj_i$. The equality $GProj_i = Dom_{r-i}$ follows from \hyperref[domgorproequ]{ \ref*{domgorproequ}}.
Note that we used $Ext^{i}(\Omega^{m}(M),(1-e)A)=Ext^{i}(\Omega^{m}(M),A) =0$, since $eA \oplus (1-e)A=A$ and $eA$ is projective and injective, so Ext vanishes for this summand in all terms. 
$\underline{add(T)}=^{\perp}(T^{\perp})$ follows using \hyperref[reiprop]{ \ref*{reiprop}}. $T^{\perp}=Codom_{i}$ follows dually. Now note that $^{\perp}(T^{\perp})=^{\perp}(Codom_{i})=Proj_i$ by \hyperref[marvil]{ \ref*{marvil}}.
$T^{\perp }=GInj_{r-i}$ and $\overline{add(T)}=(^{\perp}(T))^{\perp}=Inj_{r-i}$ are proved dually.

\end{proof}

\begin{theorem}\label{maintheorem}
Let $A$ be a Auslander-Gorenstein algebra of Gorenstein dimension $r$.
Assume furthermore that $A$ is standardly stratified with characteristic tilting module $T$.
Then the following are equivalent:
\begin{enumerate}
\item $T:=eA \oplus \Omega^{-i}((1-e)A)$ is the characteristic tilting module of the stratified structure on $A$.
\item Every standard module $\Delta(i)$ has dominant dimension at least $r-i$ and every proper costandard module $\bar{\nabla}(i)$ has codominant dimension at least $i$.
\item $F(\Delta) \subseteq Dom_{r-i}=GProj_i$ and $F(\bar{\nabla}) \subseteq Codom_i=GInj_{r-i}$.
\item $Proj_i = F(\Delta)$ and $Codom_{i}= GInj_{r-i} = F(\bar{\nabla})$.
\end{enumerate}
\end{theorem}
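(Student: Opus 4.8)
The plan is to run the cycle $(1)\Rightarrow(2)\Leftrightarrow(3)\Rightarrow(1)$ together with the separate equivalence $(1)\Leftrightarrow(4)$, relying throughout on three packages of input: the explicit description and orthogonality of the tilting--cotilting module $T_0:=eA\oplus\Omega^{-i}((1-e)A)$ from \ref{tiltchar} and \ref{tiltorth}; the dimension identities $Dom_{r-i}=GProj_i$ and $Codom_i=GInj_{r-i}$ from \ref{domgorproequ}, both of which are closed under extensions; and the standard facts for a standardly stratified algebra that $F(\Delta)={}^{\perp}F(\bar\nabla)$, $F(\bar\nabla)=F(\Delta)^{\perp}$, $\add T=F(\Delta)\cap F(\bar\nabla)$, and that every module in $F(\Delta)$ admits a \emph{finite} coresolution by $\add T$ (see \cite{Rei}, \cite{ADL}). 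Since $A$ is Gorenstein, \ref{gorcrit} guarantees that the characteristic tilting module $T$ is simultaneously a cotilting module, so that \ref{tiltchar} becomes applicable to it once its projective dimension is known.

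For $(1)\Rightarrow(2)$ I would observe that $T_0\in F(\bar\nabla)$, so $F(\Delta)={}^{\perp}F(\bar\nabla)\subseteq {}^{\perp}T_0=Dom_{r-i}$ and dually $F(\bar\nabla)=F(\Delta)^{\perp}\subseteq T_0^{\perp}=Codom_i$ by \ref{tiltorth}; specialising to the modules $\Delta(j)$ and $\bar\nabla(j)$ yields the dominant/codominant dimension bounds of (2). The equivalence $(2)\Leftrightarrow(3)$ is then purely formal: $(3)\Rightarrow(2)$ is trivial, and $(2)\Rightarrow(3)$ follows because $F(\Delta)$ (resp. $F(\bar\nabla)$) is the extension closure of the $\Delta(j)$ (resp. $\bar\nabla(j)$), while $Dom_{r-i}=GProj_i$ and $Codom_i=GInj_{r-i}$ are extension-closed.

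The heart of the argument is $(3)\Rightarrow(1)$, where I must pin down the projective dimension of $T$ exactly. From (3) we get $T\in F(\Delta)\cap F(\bar\nabla)\subseteq GProj_i\cap GInj_{r-i}$; since $T$ is tilting--cotilting it has finite projective and injective dimension, and for modules of finite projective (resp. injective) dimension the Gorenstein dimension agrees with the ordinary one, so $\mathrm{pd}\,T\le i$ and $\mathrm{id}\,T\le r-i$. To force equality I would use the defining coresolution $0\to A\to T^0\to\cdots\to T^m\to 0$ of a tilting module with $m=\mathrm{pd}\,T$: dimension shifting gives $r=\mathrm{id}\,A\le\mathrm{id}\,T+\mathrm{pd}\,T\le(r-i)+i=r$, so both inequalities are equalities and $\mathrm{pd}\,T=i$, $\mathrm{id}\,T=r-i$. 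Then \ref{tiltchar} identifies $T$ with the unique basic tilting--cotilting module of these dimensions, namely $T_0$, giving (1). I expect this squeezing of $\mathrm{pd}\,T+\mathrm{id}\,T$ to exactly $r$ to be the main obstacle, since it is what converts the soft Gorenstein-dimensional bounds of (3) into the rigid statement that $T$ has the explicit balanced form.

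Finally I would establish $(1)\Leftrightarrow(4)$. Assuming (1), the inclusion $Proj_i\subseteq F(\Delta)$ comes from $F(\bar\nabla)\subseteq Codom_i$ by applying ${}^{\perp}(-)$ and the identity $\,{}^{\perp}Codom_i=Proj_i$ of \ref{marvil}, while $F(\Delta)\subseteq Proj_i$ follows from the finite $\add T_0$-coresolutions together with $\mathrm{pd}\,T_0=i$ (a coresolution by modules of projective dimension $\le i$ forces the coresolved module into $Proj_i$ by a cokernel induction); hence $F(\Delta)=Proj_i$, and then $F(\bar\nabla)=F(\Delta)^{\perp}=Proj_i^{\perp}=Codom_i=GInj_{r-i}$ by \ref{marvil} and \ref{domgorproequ}. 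Conversely, if (4) holds then $\add T=F(\Delta)\cap F(\bar\nabla)=Proj_i\cap Codom_i$, and the uniqueness clause of \ref{tiltchar} forces $T=T_0$, which is (1). The only genuinely non-formal ingredient beyond \ref{tiltchar}--\ref{tiltorth} is the finiteness of the good coresolution in $F(\Delta)$, which is precisely what lets one pass from the relative homological algebra of the stratification to the absolute projective dimension appearing in $Proj_i$.
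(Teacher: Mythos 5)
Your proposal is correct, and it runs on the same engine as the paper's proof: a cycle through (1)--(3) plus the link to (4), powered by \ref{tiltchar}, \ref{tiltorth}, \ref{domgorproequ}, \ref{marvil}, \ref{gorcrit} and the standard facts $\add(T)=F(\Delta)\cap F(\bar{\nabla})$, $F(\bar{\nabla})=F(\Delta)^{\perp}$, $F(\Delta)={}^{\perp}F(\bar{\nabla})=\underline{add(T)}$ from \cite{Rei} and \cite{ADL}. The differences are local, and each of your variants is sound. For $(3)\Rightarrow(1)$, where the paper merely observes $T\in \add(F(\Delta)\cap F(\bar{\nabla}))\subseteq \add(Dom_{r-i}\cap Codom_i)$ and quotes the uniqueness clause of Corollary \ref{tiltchar}, you instead pin down $\mathrm{projdim}\,T=i$ and $\mathrm{injdim}\,T=r-i$ by the squeeze $r=\mathrm{injdim}\,A\le \mathrm{projdim}\,T+\mathrm{injdim}\,T\le i+(r-i)=r$ and only then apply the explicit-form clause of \ref{tiltchar}; this amounts to re-deriving the uniqueness statement (on which the paper's own proof of \ref{tiltchar} is terse), at the cost of two additional standard facts --- that a tilting module of projective dimension $m$ coresolves $A$ in length at most $m$, and that Gorenstein projective (resp.\ injective) dimension coincides with the ordinary one on modules of finite projective (resp.\ injective) dimension, both available in \cite{Che}. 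For $(1)\Rightarrow(4)$ the paper gets both equalities in one stroke from \ref{tiltorth} and \ref{reiprop}, via $F(\Delta)=\underline{add(T)}={}^{\perp}(T^{\perp})=Proj_i$ and $F(\bar{\nabla})=T^{\perp}=GInj_{r-i}=Codom_i$, while you prove $F(\Delta)\subseteq Proj_i$ by cokernel induction on finite $\add(T)$-coresolutions and $Proj_i\subseteq F(\Delta)$ from ${}^{\perp}Codom_i=Proj_i$, in effect re-proving the relevant part of \ref{tiltorth}; and your direct $(4)\Rightarrow(1)$ via uniqueness in $Proj_i\cap Codom_i$ replaces the paper's shorter route $(4)\Rightarrow(3)\Rightarrow(1)$. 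In sum: same toolkit and same overall decomposition, with your version supplying explicitly some details that the paper delegates to its preparatory results.
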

\begin{proof} \ \
\begin{enumerate} 
\item[ \ 1. $\Rightarrow$ 2. ] and 1. $\Rightarrow$ 4.: Assume first that $T$ is the characteristic tilting module of the stratified structure on $A$. Using \hyperref[tiltorth]{ \ref*{tiltorth}}, $F(\Delta)=\underline{add(T)}=Proj_{i} \subseteq GProj_i=Dom_{r-i}$ and $F(\bar{\nabla})=T^{\perp} = GInj_{r-i}=Codom_i$
\item[ \ 2. $\Rightarrow$ 3.:] Every module in $F(\Delta)$ has dominant dimension at least $r-i$, since every module in $F(\Delta)$ is build from extensions of the modules $\Delta(x)$, which have dominant dimension at least $r-i$. $F(\bar{\nabla}) \subseteq Codom_i$ has an analog proof.
\item[ \ 3. $\Rightarrow$ 1.:] Then $T \in add(F(\Delta) \cap F(\bar{\nabla}))$ by the definition of the characteristic tilting module and by 3.:  $add(F(\Delta) \cap F(\bar{\nabla})) \subseteq add(Dom_{r-i} \cap Codom_i)$. But by  \hyperref[tiltchar]{ \ref*{tiltchar}}, the only basic tilting module in $add(Dom_{r-i} \cap Codom_i)$ is $T= eA \oplus \Omega^{-m}((1-e)A)$. 
\item[ \ 4. $\Rightarrow$ 3.:] This is trivial. 
\end{enumerate}
\end{proof}

\begin{theorem}
\label{dualityhigh}
Assume that $A$ is a properly stratified Auslander-Gorenstein algebra with a duality such that the characteristic tilting module coincides with the characteristic cotilting module. Then the Gorenstein dimension of $A$ equals $2m$ for some $m \geq 1$, when the characteristic tilting module has projective dimension $m$ and thus $F(\bar{\Delta})= Dom_{m}=GProj_m, F(\Delta)=Proj_m,F(\bar{\nabla})= Codom_{m}=GInj_{m}, F(\nabla)=Inj_m$.
\end{theorem}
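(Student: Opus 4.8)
The plan is to first use \ref{MazOvtheo} to pin the Gorenstein dimension at $2m$, then run the characteristic tilting module $T$ through the index-$i$ results of this section with $i=m$, and finally use the duality to convert the two identities this produces into the remaining two.

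First I would observe that, since $A$ is Auslander--Gorenstein, it is in particular Gorenstein, so by \ref{gorensteinchara} its Gorenstein dimension $r$ equals the finitistic dimension of $A$. The hypotheses are precisely those of \ref{MazOvtheo}, which gives that the finitistic dimension equals $2\cdot\mathrm{projdim}(T)=2m$, where $m:=\mathrm{projdim}(T)$; hence $r=2m$. Since $A$ has dominant dimension at least $2$ we have $r\ge 2$, forcing $m\ge 1$. I would also invoke the dual form of \ref{MazOvtheo} noted just after it, namely that the finitistic dimension equals $2\cdot\mathrm{injdim}(T)$, to conclude $\mathrm{injdim}(T)=m=r-m$. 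Thus $T$ is a basic tilting--cotilting module (tilting $=$ cotilting by \ref{gorcrit}) of projective dimension $i=m$ and injective dimension $r-i=m$, and by hypothesis it is the characteristic tilting module.

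Next I would feed this into the machinery with $i=m$. By \ref{tiltchar} one gets $T\cong eA\oplus\Omega^{-m}(A)$, so condition $(1)$ of \ref{maintheorem} is satisfied; the implication $(1)\Rightarrow(4)$ there then yields $F(\Delta)=Proj_m$ and $F(\bar{\nabla})=Codom_m=GInj_m$, two of the four claimed equalities. To obtain the other two I would apply the simple-preserving duality $\delta$, which interchanges $\Delta(i)\leftrightarrow\nabla(i)$ and $\bar{\Delta}(i)\leftrightarrow\bar{\nabla}(i)$, hence $F(\Delta)\leftrightarrow F(\nabla)$ and $F(\bar{\nabla})\leftrightarrow F(\bar{\Delta})$, and which, being a contravariant duality, swaps $Proj_m\leftrightarrow Inj_m$, $Dom_m\leftrightarrow Codom_m$ and $GProj_m\leftrightarrow GInj_m$. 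Applying $\delta$ to $F(\Delta)=Proj_m$ gives $F(\nabla)=Inj_m$, and applying it to $F(\bar{\nabla})=Codom_m=GInj_m$ gives $F(\bar{\Delta})=Dom_m=GProj_m$, completing the list.

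The step I expect to be the crux is the first one: squeezing out both $r=2m$ and $\mathrm{injdim}(T)=m$ from the two (mutually dual) forms of \ref{MazOvtheo} together with the Gorenstein identity ``Gorenstein dimension $=$ finitistic dimension'', since it is exactly this that legitimizes taking $i=m$ and applying \ref{tiltchar}, \ref{tiltorth} and \ref{maintheorem}. After that the argument is essentially bookkeeping; the only point requiring care is that $\delta$ genuinely exchanges the (proper) standard and (proper) costandard modules and swaps projective with injective as well as dominant with codominant and Gorenstein-projective with Gorenstein-injective dimensions, so that it faithfully transports the two computed identities onto the two remaining ones.
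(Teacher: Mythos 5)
Your proposal is correct and follows essentially the same route as the paper: invoke \ref{MazOvtheo} (and its dual) together with ``Gorenstein dimension $=$ finitistic dimension'' to get $r=2m$ and $\mathrm{projdim}(T)=\mathrm{injdim}(T)=m$, identify $T$ with $eA\oplus\Omega^{-m}(A)$ (you via \ref{tiltchar}, the paper via \ref{projinjdimcrit}), and then apply \ref{maintheorem}. The only difference is cosmetic: you spell out the duality argument transporting $F(\Delta)=Proj_m$ and $F(\bar{\nabla})=Codom_m=GInj_m$ to the remaining two equalities, a step the paper leaves implicit in ``the result follows from \ref{maintheorem}.''
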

\begin{proof}
Since $A$ has finite Gorenstein dimension, the Gorenstein dimension coincides with the finitistic dimension of $A$. We can apply theorem \ref{MazOvtheo}, and get $2projdim(T)=2m=2injdim(T)$, where $2m$ is the Gorenstein dimension of $A$. Thus by \ref{projinjdimcrit}, $T$ has to be isomorphic to $eA \oplus \Omega^{-m}((1-e)A)$ and the result follows from the theorem \ref{maintheorem}.
\end{proof}
We note the following special case for quasi-hereditary algebras explicitly:
\begin{corollary}\label{quasiheredduality}
Assume that $A$ is a quasi-hereditary higher Auslander algebra with a duality. Then the global dimension of $A$ equals $2m$ for some $m \geq 1$ and the characteristic tilting module has projective dimension $m$ and thus $F(\Delta)= Dom_{m}=Proj_m, F(\nabla)=Codom_m=Inj_m$.
\end{corollary}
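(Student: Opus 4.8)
The plan is to obtain this corollary as a specialization of Theorem \ref{dualityhigh}, so the real work is to check that a quasi-hereditary higher Auslander algebra with a duality satisfies all the hypotheses of that theorem, and then to collapse the ``proper'' versions of the filtration categories to the genuine ones. First I would record the numerical setup. Since $A$ is a higher Auslander algebra, it has finite global dimension equal to its dominant dimension $d\geq 2$. An algebra of finite global dimension is Gorenstein with Gorenstein dimension equal to its global dimension: the finitistic dimension then equals the global dimension and, as noted before \ref{gorensteinchara}, coincides with the injective dimension of the regular module. Hence $A$ is an Auslander-Gorenstein algebra with $Gordim(A)=gldim(A)=d$.

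Next I would verify the stratified hypotheses. Because $A$ is quasi-hereditary, $A^{op}$ is quasi-hereditary as well, so both $A$ and $A^{op}$ are standardly stratified and therefore $A$ is properly stratified; moreover, for quasi-hereditary algebras the characteristic tilting module automatically coincides with the characteristic cotilting module, as recalled in the preliminaries. Together with the given duality, all hypotheses of Theorem \ref{dualityhigh} now hold. Applying that theorem yields that $Gordim(A)=2m$ with $m=projdim(T)\geq 1$, together with the identities $F(\overline{\Delta})=Dom_m=GProj_m$, $F(\Delta)=Proj_m$, $F(\overline{\nabla})=Codom_m=GInj_m$ and $F(\nabla)=Inj_m$; since $Gordim(A)=gldim(A)$, the global dimension equals $2m$ as desired.

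It then remains to simplify. For a quasi-hereditary algebra one has $[\Delta(i):S(i)]=1$ for all $i$, equivalently $\Delta(i)=\overline{\Delta}(i)$ and dually $\nabla(i)=\overline{\nabla}(i)$, whence $F(\overline{\Delta})=F(\Delta)$ and $F(\overline{\nabla})=F(\nabla)$. Because $A$ has finite global dimension, the Gorenstein projective modules are exactly the projective ones and the Gorenstein injective modules are exactly the injective ones, so $GProj_m=Proj_m$ and $GInj_m=Inj_m$. Substituting these equalities into the identities produced by \ref{dualityhigh} gives $F(\Delta)=Dom_m=Proj_m$ and $F(\nabla)=Codom_m=Inj_m$, completing the proof. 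I do not expect a genuine obstacle here, since the statement is an assembly of earlier results; the only point needing care is the coherence of the invariants, namely that for a higher Auslander algebra the global, dominant and Gorenstein dimensions all coincide, and that the convention for ``standardly stratified'' is such that quasi-heredity really forces $\Delta=\overline{\Delta}$ and $\nabla=\overline{\nabla}$, which is exactly what makes $F(\overline{\Delta})$ and $F(\Delta)$ interchangeable.
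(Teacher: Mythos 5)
Your proposal is correct and is precisely the paper's intended argument: the paper states this corollary without proof as an immediate specialization of Theorem \ref{dualityhigh}, relying on exactly the facts you assemble (quasi-hereditary $\Rightarrow$ properly stratified with characteristic tilting equal to characteristic cotilting, finite global dimension $\Rightarrow$ Gorenstein with $Gordim=gldim$, $\Delta=\overline{\Delta}$, $\nabla=\overline{\nabla}$, and $GProj_m=Proj_m$, $GInj_m=Inj_m$). Your care about the coherence of the invariants and the collapse of the ``proper'' filtration categories fills in the details the paper leaves implicit.
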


\begin{remark}\label{mazorchuktheo}
The author conjectured the following: \newline
Let $A$ be a properly stratified algebra. Then the following are equivalent:
\begin{enumerate}
\item $A$ is Gorenstein.
\item The characteristic tilting module coincides with the characteristic cotilting module.
\end{enumerate}
Volodymyr Mazorchuk could sketch a proof of the theorem. We will provide a proof in forthcoming work and note that the theorem implies that the assumption that the characteristic cotilting module coincides with the characteristic tilting module in \ref{dualityhigh} is not necessary.
\end{remark}

\section{examples and applications}
\subsection{Examples and counterexamples}
We assume here that all Nakayama algebras are given by quiver and relations and their simple modules are numbered from $0$ to $n-1$, when the algebra has $n$ simple modules. In case their quiver is linear, the leftmost point has index $0$ and the rightmost has index $n-1$ and the arrows of the quiver point from left to right.  The Kupisch series $[a_0,a_1,...,a_{n-1}]$ just gives the dimension of the indecomposable projective modules $e_iA$ at point $i$. See \cite{ARS} and \cite{Mar} for more details on Nakayama algebras.
Our first example shows that there exist quasi-hereditary higher Auslander algebras of arbitrary global dimension $\geq 2$.
\begin{example}
Let $A$ be the Nakayama algebra with $n$ simple modules and Kupisch series $[2,2,...,2,3]$. Then $A$ is a higher Auslander algebra with global dimension equal to $n$. It is quasi-hereditary with ordering $[1,2,3,...,n-1,0]$ and $F(\Delta)=Dom_1, F(\nabla)=Codom_{n-1}$, since all modules $\Delta(i)$ have dominant dimension at least one and all costandard modules have codominant dimension at least $n-1$, using \ref{maintheorem}.
\end{example}

The next two algebras show that in contrast to Auslander algebras, higher Auslander algebras or Auslander-Gorenstein algebras of Gorenstein dimension two are in general not standardly stratified.
\begin{example}
Let $A$ be the Nakayama algebra with Kupisch series $[4,5,5]$, then $A$ is a Auslander-Gorenstein algebra with Gorenstein dimension two. But $A$ is not standardly stratified.
\end{example}

\begin{example}
The Nakayama algebra $A$ with Kupisch series $[3,4,4]$ has global dimension equal to its dominant dimension, which both are equal to 4. But $A$ is not quasi-hereditary.
This shows that not every higher Auslander algebra is quasi-hereditary in contrast to the case of Auslander algebras.
\end{example}

The following main result from \cite{CheDl} provides many examples of Auslander-Gorenstein algebras with Gorenstein dimension two, which are also properly stratified with a duality. 
\begin{theorem}
\label{chedlabtheo}
\begin{enumerate}
\item Let $A$ be a local, commutative selfinjective algebra over an algebraically closed field. Let $\mathcal{X}=(A=X(1),X(2),...,X(n))$ be a sequence of local-colocal modules (meaning that all modules have simple socle and top and therefore can be viewed as ideals of $A$) with $X(i) \subseteq X(j)$ implying $j \leq i$. Let $X= \bigoplus\limits_{i=1}^{n}{X(i)}$ and $B=End_A(X)$. Then $B$ is properly stratified with a duality iff the following two conditions are satisfied:  \newline
1. $X(i) \cap X(j)$ is generated by suitable $X(t)$ of $\mathcal{X}$ for any $1 \leq i,j \leq n$ \newline
2. $X(j) \cap \sum\limits_{t=j+1}^{n}{X(t)}=\sum\limits_{t=j+1}^{n}{X(j) \cap X(t)}$ for any $1 \leq j \leq n$.
\item Such an algebra $B$ has Gorenstein dimension equal to dominant dimension equal to 2 iff $\bigoplus\limits_{i=2}^{n}{X(i)}$ is 2-periodic.
\end{enumerate}
\end{theorem}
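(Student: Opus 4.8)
The plan is to treat the two parts separately, using throughout that $A$ is a commutative Frobenius algebra, so that the members of $\mathcal{X}$ are principal ideals $X(i)=x_iA$ and every ideal $I$ satisfies the double-annihilator identity $\operatorname{ann}(\operatorname{ann}(I))=I$. First I would record two structural facts. Since $A=X(1)$ is a direct summand of $X$ and is at the same time the injective cogenerator of mod-$A$, the module $X$ is a generator-cogenerator; hence $B=\End_A(X)$ is gendo-symmetric and has dominant dimension at least two by \ref{Mueller}. Next I would observe that the duality on $B$ is automatic and needs no hypothesis: the functor $\Hom_A(-,A)$ is a dimension-preserving self-duality of mod-$A$, and for each principal ideal one computes $\Hom_A(x_iA,A)\cong\operatorname{ann}(\operatorname{ann}(x_i))=X(i)$. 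Thus $\Hom_A(-,A)$ fixes every indecomposable summand of $X$, gives an anti-automorphism of $B$, and composing with the $K$-dual $D$ yields a simple-preserving duality of mod-$B$. Consequently the whole content of part (1) is the equivalence ``$B$ (and hence $B^{\mathrm{op}}$) is properly stratified $\iff$ conditions 1 and 2''.

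The technical heart of part (1) is the identity $\operatorname{tr}_{X(j)}(X(i))=X(i)\cap X(j)$, where $\operatorname{tr}$ denotes the sum of images of all $A$-homomorphisms. I would prove this by noting that a homomorphism $x_jA\to X(i)$ is multiplication by some $y\in X(i)$ with $\operatorname{ann}(x_j)\subseteq\operatorname{ann}(y)$, that the set of such $y$ is exactly $X(i)\cap\operatorname{ann}(\operatorname{ann}(x_j))$, and that $\operatorname{ann}(\operatorname{ann}(x_j))=X(j)$ by double annihilation. With the ordering chosen so that $X(1)=A$ comes first, this converts the trace of the higher ideals into the intersections appearing in the theorem, so that the proper standard module $\overline{\Delta}(i)$ becomes $\Hom_A(X,-)$ applied to the quotient of $X(i)$ by $\sum_{t>i}(X(i)\cap X(t))$ (modulo the radical endomorphisms of $X(i)$). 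I would then show that the regular module $B$ has a proper standard filtration precisely when, for each $i$, the chain of traces $\sum_{t>i}(X(i)\cap X(t))$ gives clean subquotients lying in $\add X$: condition 1 is exactly what forces each intersection $X(i)\cap X(j)$, and hence each subquotient, to be generated by members of $\mathcal{X}$ so that $\Hom_A(X,-)$ returns genuine proper standard modules, while condition 2 is the distributivity making the successive trace quotients exact, i.e. realising the filtration with the correct subquotients and no overlap. Running the dual argument over $A^{\mathrm{op}}=A$ gives the $B^{\mathrm{op}}$-statement and hence proper stratification. I expect the main obstacle here to be the precise matching of condition 2 with the proper standard filtration: one must verify that distributivity is not merely sufficient but necessary, by exhibiting, when it fails, a break in the filtration that prevents $B$ from being standardly stratified.

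For part (2) I would write $X=A\oplus N$ with $N=\bigoplus_{i=2}^{n}X(i)$ and read the statement through \ref{chekoeres}. For the implication ``$N$ is $2$-periodic $\Rightarrow$ Gorenstein dimension $=$ dominant dimension $=2$'' I would invoke \ref{chekoeres} together with the remark following it: $\Omega^{2}(N)\cong N$ makes the hypotheses hold with the onset index equal to $1$, so $B=\End_A(A\oplus N)$ has dominant dimension equal to Gorenstein dimension equal to $2$. For the converse I would argue that dominant dimension $2$ forces, via \ref{Mueller}, $\inf\{\,i\ge 1\mid \Ext^{i}_A(X,X)\neq 0\,\}=1$, i.e. the onset index ``$n+1$'' of \ref{chekoeres} equals $1$; and that the equality of the finite Gorenstein dimension with the dominant dimension at the value $2$ then forces the periodicity relation $\Omega^{2}(N)\cong N$ which drives the Gorenstein resolution of $B$. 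This converse is the main obstacle of part (2): \ref{chekoeres} is stated as a one-way implication, so I would have to reverse the computation of \cite{CheKoe}, tracking the minimal injective coresolution of $B$ and the induced Gorenstein-projective resolution to show that a Gorenstein dimension equal to the dominant dimension $2$ cannot occur unless the $A$-syzygies of $N$ already close up after two steps, which is precisely $2$-periodicity.
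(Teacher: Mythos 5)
Your structural observations are sound, and they do match the mechanisms underlying the results the paper relies on: $X$ contains $A=X(1)$, which is both projective and the injective cogenerator (as $A$ is commutative Frobenius, hence symmetric), so $X$ is a generator-cogenerator and $B=\End_A(X)$ is gendo-symmetric with dominant dimension at least two by \ref{Mueller}; the double-annihilator argument showing $\Hom_A(X(i),A)\cong X(i)$, hence that the simple-preserving duality on $\operatorname{mod}-B$ is automatic, is correct; and the trace identity $\operatorname{tr}_{X(j)}(X(i))=X(i)\cap X(j)$ is exactly the bridge between the ideal-theoretic conditions 1 and 2 and the stratification data of $B$. However, you should be aware that the paper does not prove this theorem at all in the sense you are attempting: its proof of part (1) is the citation ``this is Theorem 2.4 in \cite{CheDl}'', and its proof of part (2) is ``this follows from \ref{chekoeres}'', i.e.\ from \cite{CheKoe}, Corollary 5.4. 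So the standard your proposal must meet is that of a complete from-scratch reconstruction of those two cited results.

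Measured against that standard, both halves of your argument stop precisely at the implications that carry the content of the theorem, and you say so yourself. In part (1), the necessity of conditions 1 and 2 is never argued: ``one must verify that distributivity is not merely sufficient but necessary, by exhibiting, when it fails, a break in the filtration'' is a statement of the task, not a proof; and even the sufficiency direction is only an outline --- you do not verify that the trace filtration of $\Hom_A(X,X(i))$ really is a proper standard filtration of the projective $B$-modules with subquotients $\overline{\Delta}$, nor do you carry out the opposite-algebra half needed for \emph{proper} stratification (here the self-duality at least reduces the work). In part (2), the converse implication (Gorenstein dimension $=$ dominant dimension $=2$ $\Rightarrow$ $\bigoplus_{i\geq 2}X(i)$ is $2$-periodic) is exactly the direction that \ref{chekoeres}, as stated in the paper, does not give; ``I would have to reverse the computation of \cite{CheKoe}'' is again a plan rather than an argument, and it is the nontrivial step --- one must show that finiteness of the injective dimension of $B$ at the value $2$ forces the $A$-syzygies of $N$ to close up, which is the substance of the two-sided statement in \cite{CheKoe}. (Your instinct that something is asymmetric here is correct: the paper itself derives an ``iff'' from the one-directional \ref{chekoeres}, which only works because the cited Corollary 5.4 of \cite{CheKoe} is genuinely two-sided.) As it stands, your proposal is a credible roadmap with the two hard implications missing; to close the gaps you would either have to reproduce the proofs of \cite{CheDl} and \cite{CheKoe} in detail, or simply cite them, which is what the paper does.
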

\begin{proof}
\begin{enumerate}
\item This is theorem 2.4. in \cite{CheDl}.
\item This follows from \ref{chekoeres}.
\end{enumerate}
\end{proof}

\begin{example}
Let $Q$ be a quiver with 1 point and two loops $x$ and $y$ and $A=KQ/I$, where \newline $I=<x^2,y^2,xy-yx>$. Note that $A$ is symmetric and isomorphic to the group algebra of the Klein four group in case $K$ has characteristic 2. Let $M:=A \oplus xA$. Then $B:=End_A(M)$ is properly stratified, has a duality and has dominant dimension equal to Gorenstein dimension equal to two since $xA$ is 2-periodic, using \ref{chedlabtheo}. This implies $F(\bar{\Delta})=Dom_1$ by \ref{dualityhigh}. $B$ does not have finite global dimension (otherwise $B$ would have global dimension equal to its Gorenstein dimension equal to two, but $B$ is certainly not an Auslander algebra as $add(M)$ is not equal to $mod-A$) and thus is not quasi-hereditary.
\end{example}

\subsection{Applications to relative Auslander-Reiten theory}
We show that our methods allow us to give an explicit form of the relative Auslander-Reiten translates in $F(\overline{\Delta})$ of certain standardly stratified algebras. Before doing so, we recall some results from the literature.
We refer the reader to \cite{Kl} for the standard facts on relative homological algebra needed here. Recall that extension-closed functorially finite subcategories $C$ in $mod-A$ have almost split sequences and we denote the relative Auslander-Reiten translate in that subcategory by $\tau_C$.
The next result can be found in section 3 of \cite{AS2}. 
\begin{theorem}
The category $\Omega^{n}(A-mod)$ is functorially finite and for a given nonprojective module $M$, a right $\Omega^{n}(A-mod)$-approximation of $M$ is of the form $\Omega^{n}(\Omega^{-n}(M)) \oplus P \rightarrow M$, for some projective module $P$.
\end{theorem}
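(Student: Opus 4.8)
The plan is to produce the approximation morphism explicitly by comparing two resolutions of the cosyzygy $\Omega^{-n}(M)$, and then to verify the universal factorisation property by a dimension-shifting argument. First I would fix the minimal injective copresentation of length $n$ computing the cosyzygy, giving an exact sequence $0 \to M \to I^0 \to \cdots \to I^{n-1} \to \Omega^{-n}(M) \to 0$ with all $I^j$ injective, and separately a projective resolution $0 \to \Omega^{n}(\Omega^{-n}(M)) \to Q_{n-1} \to \cdots \to Q_0 \to \Omega^{-n}(M) \to 0$. Aligning the two augmentations over $\Omega^{-n}(M)$ and using projectivity of the $Q_j$ together with exactness of the injective sequence, the comparison theorem lifts $\mathrm{id}_{\Omega^{-n}(M)}$ to a chain map whose leftmost component is a morphism $\psi \colon \Omega^{n}(\Omega^{-n}(M)) \to M$. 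Adjoining a projective cover $P \to M$ produces the candidate $\psi \oplus (P \to M) \colon \Omega^{n}(\Omega^{-n}(M)) \oplus P \to M$, whose source manifestly lies in $\Omega^{n}(A-mod)$.

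The substantive step is to check that every morphism from an object of $\Omega^{n}(A-mod)$ factors through this map. Since an object of $\Omega^{n}(A-mod)$ is a summand of $\Omega^{n}(Y) \oplus (\text{projective})$, and maps out of a projective factor through $P \to M$ by the projective cover property, it suffices to factor an arbitrary $h \colon \Omega^{n}(Y) \to M$ through $\psi$. Here I would use the embedding $\Omega^{n}(Y) \hookrightarrow R_{n-1}$ coming from a projective resolution of $Y$ together with the injectivity of $I^0,\dots,I^{n-1}$: composing $h$ with $M \hookrightarrow I^0$ and extending successively along the injective coresolution yields a chain map from the projective resolution of $Y$ to the injective coresolution of $M$, hence an induced morphism $\bar{h}\colon Y \to \Omega^{-n}(M)$ on cokernels. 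Lifting $\bar h$ through the projective resolution $Q_\bullet$ and restricting to $n$-th syzygies gives $\Omega^{n}(\bar h)\colon \Omega^{n}(Y) \to \Omega^{n}(\Omega^{-n}(M))$, and the expected conclusion is that $h - \psi\circ\Omega^{n}(\bar h)$ is stably trivial, i.e. factors through a projective; that discrepancy is then absorbed by the summand $P$, giving the required factorisation.

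Finally, contravariant finiteness of $\Omega^{n}(A-mod)$ is precisely the existence of the right approximations just produced. For covariant finiteness I would invoke the duality $D$, which satisfies $D\,\Omega^{-n} \cong \Omega^{n}\,D$ and interchanges left and right approximations, so that left $\Omega^{n}(A-mod)$-approximations in $mod-A$ correspond to right $\Omega^{-n}(A^{op}-mod)$-approximations in $mod-A^{op}$, whose existence is the dual (cosyzygy) form of the construction above; hence $\Omega^{n}(A-mod)$ is functorially finite. I expect the main obstacle to be the bookkeeping in the middle paragraph: arranging the two comparison chain maps so that the induced map on cosyzygies is genuinely a one-sided inverse to the restriction of $\psi$ modulo projectives, and pinning down exactly which part of the homotopy indeterminacy is killed by $P$. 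Choosing the resolutions minimally should moreover make $\psi \oplus (P\to M)$ a right \emph{minimal} approximation, though that refinement is not needed for the statement as given.
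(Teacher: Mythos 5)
The paper does not actually prove this statement---it cites Section~3 of \cite{AS2}---so your attempt has to be judged on its own. The first half of your proposal, contravariant finiteness together with the explicit form of the right approximation, is correct, and the step you flagged as the main obstacle does go through: $h-\psi\circ\Omega^{n}(\bar h)$ is the leftmost component of a chain map from $0\rightarrow \Omega^{n}(Y)\rightarrow R_{n-1}\rightarrow \dots \rightarrow R_0$ to $0\rightarrow M\rightarrow I^{0}\rightarrow \dots \rightarrow I^{n-1}$ that induces the zero map on the cokernels $Y\rightarrow \Omega^{-n}(M)$; hence its component $R_0\rightarrow I^{n-1}$ lands in the image of $I^{n-2}\rightarrow I^{n-1}$, and lifting successively through the epimorphisms onto the cosyzygies (using projectivity of the $R_j$) builds a homotopy whose final relation, combined with the fact that $M\rightarrow I^{0}$ is a monomorphism, shows that $h-\psi\circ\Omega^{n}(\bar h)$ factors through the projective module $R_{n-1}$, and therefore through the projective cover $P\rightarrow M$. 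This is essentially the argument of the cited source.

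The genuine gap is in covariant finiteness, where your duality reduction is circular. You correctly observe that $D$ converts left $\Omega^{n}(A\text{-}mod)$-approximations over $A$ into right approximations over $A^{op}$ with respect to $D(\Omega^{n}(A\text{-}mod))=\Omega^{-n}(A^{op}\text{-}mod)$, the category of $n$-th \emph{cosyzygies} together with the injectives. So what you need is that cosyzygy categories are \emph{contravariantly} finite. But what your construction proves is that syzygy categories are contravariantly finite, and its formal dual is that cosyzygy categories are \emph{covariantly} finite: dualizing produces, for each module $N$, a map $N\rightarrow \Omega^{-n}(\Omega^{n}(N))\oplus I$ \emph{out of} $N$, not into it. Indeed there is no comparison map $\Omega^{-n}(\Omega^{n}(M))\rightarrow M$ at all---lifting the identity of $\Omega^{n}(M)$ between its projective resolution and its injective coresolution gives a map $M\rightarrow \Omega^{-n}(\Omega^{n}(M))$, in the wrong direction. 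Thus the statement you invoke is exactly the $D$-translate of the statement you are trying to prove, and the covariant half is left unproved; this matters, because the paper uses functorial finiteness (not just contravariant finiteness) to guarantee the existence of relative almost split sequences. A correct treatment needs a different idea: for $n=1$, $\operatorname{\mathrm{add}}\Omega^{1}(A\text{-}mod)$ is the category of torsionless modules, and the left approximation of $M$ is the quotient $M\rightarrow M/K$ with $K$ the intersection of the kernels of all homomorphisms $M\rightarrow A$; for general $n$ one uses the Auslander--Smal{\o} machinery of functorially finite subcategories (or the inductive argument in \cite{AS2}) rather than the dual of the right-approximation construction.
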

The next theorem is a special case of \cite{Kl}, theorem 2.3.
\begin{theorem}
Let $C$ be an extension-closed functorially finite subcategory and $M$ a module in $C$.
Then $\tau_C(M)$ equals the unique indecomposable summand $Y$ of a right $C$-approximation of $M$ with $Ext^{1}(M,Y) \neq 0$.
\end{theorem}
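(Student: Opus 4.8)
My plan is to obtain the statement as the specialization of Kleiner's Theorem 2.3 in \cite{Kl} to the present hypotheses, so that the real work splits into producing the relative almost split sequence out of ordinary Auslander--Reiten theory by a single approximation, and then pinning down which summand of that approximation is the relative translate. First I would record that the hypotheses genuinely apply: since $C$ is extension-closed and functorially finite it carries relative almost split sequences, so for an indecomposable $M \in C$ that is not relatively projective there is a relative almost split sequence $0 \to \tau_C(M) \to E \to M \to 0$ with $\tau_C(M)$ indecomposable and $E \in C$. Because $C$ is extension-closed this is an honest non-split short exact sequence of $A$-modules, and hence $\operatorname{Ext}^1(M, \tau_C(M)) \neq 0$; this already establishes the required Ext-nonvanishing for the candidate $Y = \tau_C(M)$ and shows it is indecomposable.

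Next I would exhibit $\tau_C(M)$ as a summand of the relevant right $C$-approximation, namely the one of the ordinary Auslander--Reiten translate $\tau M$ of $M$. Starting from the ordinary almost split sequence $0 \to \tau M \to B \to M \to 0$, one takes a minimal right $C$-approximation $p \colon Z \to \tau M$, which exists since $C$ is contravariantly finite, and transports the almost split class along $p_* \colon \operatorname{Ext}^1(M, Z) \to \operatorname{Ext}^1(M, \tau M)$ to a short exact sequence with all three terms in $C$ (for the concrete subcategories of interest, such as $\Omega^n(A\text{-}\mathrm{mod})$, the shape of $Z$ is exactly the one recorded in the preceding theorem of \cite{AS2}). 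The two things to verify here are that the almost split class lifts through $p_*$, and that the resulting sequence is relatively right almost split; both reduce to the defining surjectivity of the approximation $p$ on $\operatorname{Hom}$-groups out of objects of $C$. Passing to the minimal version then gives $Z \cong \tau_C(M)$, so $\tau_C(M)$ occurs as an indecomposable summand of a right $C$-approximation.

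Finally I would prove uniqueness of the distinguished summand. Decomposing a right $C$-approximation as $Z = Y \oplus Z'$ into indecomposables, the transported almost split class is supported on the summand $Y \cong \tau_C(M)$, while every further summand of $Z'$ is superfluous for the approximation and therefore carries no part of that class, so $\operatorname{Ext}^1(M, -)$ cannot detect on it the extension singled out by the relative almost split sequence. Combined with the simplicity of the socle of $\operatorname{Ext}^1_C(M, -)$ as an endomorphism module — the abstract reason the relative almost split sequence is unique — this forces exactly one indecomposable summand to satisfy $\operatorname{Ext}^1(M, Y) \neq 0$, namely $\tau_C(M)$. I expect this last step to be the main obstacle: matching the right-minimality of the approximation $p$ with the minimality and indecomposability of the relative almost split sequence, and ruling out accidental Ext-nonvanishing on superfluous summands, is precisely the technical core that \cite{Kl}, Theorem 2.3 packages, so that the specialization ultimately amounts to verifying that an extension-closed functorially finite subcategory falls under its hypotheses.
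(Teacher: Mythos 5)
Your proposal takes essentially the same route as the paper: the paper gives no independent argument for this statement but simply records it as a special case of Kleiner's Theorem 2.3 in \cite{Kl}, which is exactly what your plan reduces to — the intermediate steps you sketch (existence of relative almost split sequences in extension-closed functorially finite subcategories, lifting the ordinary almost split class through the minimal right $C$-approximation of $\tau M$, and isolating the unique summand detected by $\operatorname{Ext}^1(M,-)$) are the internal content of Kleiner's proof, not something the paper reproves. You also correctly read the approximation as being one of $\tau(M)$ rather than of $M$ itself (as the statement literally says), which matches both Kleiner's theorem and the way the paper applies the result in the proposition that follows it.
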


\begin{proposition}
Assume that $A$ is a properly stratified Auslander-Gorenstein algebra with a simple-preserving duality with dominant dimension $2m$ for some $m \geq 1$ such that the characteristic tilting module coincides with the characteristic cotilting module. Let $C:=F(\hat{\Delta})$ and $M \in C$ a non Ext-projective module. Then $\tau_C(M)$ is the unique indecomposable summand $Y$ of $\Omega^{m}(\Omega^{-m}(\tau(M)))$ with $Ext^{1}(M,Y) \neq 0$.
\end{proposition}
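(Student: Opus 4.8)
The plan is to identify the category $C$ explicitly and then feed the two recalled results on relative Auslander--Reiten theory into one another. First I would invoke Theorem \ref{dualityhigh}: under the stated hypotheses the Gorenstein dimension of $A$ is $2m$ and $C=F(\overline{\Delta})=Dom_m=GProj_m$, which by Theorem \ref{marvil} equals $\Omega^m(A-mod)$. Thus $C$ is simultaneously of the two forms needed below: as $\Omega^m(A-mod)$ it is functorially finite by the theorem of Auslander--Solberg (\cite{AS2}) recalled above, and as $F(\overline{\Delta})$ it is extension-closed; hence it admits relative almost split sequences, the translate $\tau_C$ is defined, and the theorem of Kleiner (\cite{Kl}) applies.

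We may assume $M$ indecomposable. Since every projective module is Ext-projective in any subcategory ($\Ext^1(P,-)=0$), the hypothesis that $M$ is non-Ext-projective gives in particular that $M$ is non-projective, so the ordinary translate $\tau(M)$ is the usual indecomposable non-injective module. Applying Kleiner's theorem --- with the ordinary translate $\tau(M)$ as the module whose $C$-approximation is taken --- reduces the problem to identifying the unique indecomposable summand $Y$ of a minimal right $C$-approximation of $\tau(M)$ with $\Ext^1(M,Y)\neq 0$. By the Auslander--Solberg theorem this approximation has the explicit form $\Omega^m(\Omega^{-m}(\tau(M)))\oplus P\to\tau(M)$ with $P$ projective, so a priori $Y$ is a summand of $\Omega^m(\Omega^{-m}(\tau(M)))\oplus P$.

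The crux --- and the step I expect to need the most care --- is to show that the projective correction term $P$ does not contribute, so that $Y$ is in fact a summand of $\Omega^m(\Omega^{-m}(\tau(M)))$. The point is that in the minimal right $\Omega^m(A-mod)$-approximation the summand $P$ is projective-\emph{injective}, i.e.\ $P\in\add(eA)$. Indeed, $\Omega^{-m}$ annihilates injective summands, so the natural counit $\Omega^m(\Omega^{-m}(\tau(M)))\to\tau(M)$ already realises every stably nonzero map from an object of $C$ into $\tau(M)$; the maps one must still accommodate in order to obtain an honest right approximation in $mod-A$ are exactly the stably zero ones, which factor through injectives, and since all indecomposable projectives lie in $Dom_m=C$ (because $domdim(A)=2m\geq m$), the relevant such objects are the projective-injective modules $eA\in C$. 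This forces $P\in\add(eA)$. Here one uses that $\tau(M)$ is indecomposable and non-injective, so it contributes no injective summand that could turn the correction term into a genuinely non-injective projective; this is precisely where the dominant-dimension and self-dual Gorenstein structure enter.

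Finally, since $eA$ is injective we have $\Ext^1(M,P)=0$, so no indecomposable summand of $P$ can satisfy $\Ext^1(M,-)\neq 0$; by Kleiner's uniqueness the distinguished summand $Y$ is therefore forced into $\Omega^m(\Omega^{-m}(\tau(M)))$, where moreover it is automatically non-projective since a minimal syzygy has no projective summands. This $Y$ is $\tau_C(M)$, which is the assertion.
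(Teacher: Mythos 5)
Your reduction is the same as the paper's: identify $C=F(\overline{\Delta})=Dom_m=GProj_m=\Omega^{m}(A-mod)$ via \ref{dualityhigh} and \ref{marviltheo}, then feed the Auslander--Solberg form of right $\Omega^{m}(A-mod)$-approximations into Kleiner's theorem applied to $\tau(M)$; up to that point you agree with the paper, whose proof is exactly this citation. You are also right that an honest argument must additionally rule out that the distinguished summand $Y$ hides in the projective correction term $P$ --- a point the paper passes over in silence. But your resolution of that point is where the proof breaks, and it is a genuine error rather than a presentational one. The claim that in the minimal right $\Omega^{m}(A-mod)$-approximation the projective part lies in $\add(eA)$ is not what the Auslander--Solberg construction gives, and your justification conflates the two stable categories: the maps from objects of $C$ to $\tau(M)$ that the counit $\Omega^{m}(\Omega^{-m}(\tau(M)))\rightarrow \tau(M)$ fails to realise are those that vanish modulo \emph{projectives} (in the comparison-theorem proof the error term factors through a projective module), not maps factoring through injectives; absorbing them requires (a summand of) the projective cover of $\tau(M)$, which has no reason to be injective. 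Hence $\Ext^{1}(M,P)=0$ is unjustified.

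Worse, your closing assertion that $Y$ is ``automatically non-projective since a minimal syzygy has no projective summands'' is false, and the paper's own Example \ref{nakarq} refutes it. Already for $d=1$ (the Auslander algebra of $K[x]/(x^{2})$, so $m=1$) the paper exhibits the relative almost split sequence $0 \rightarrow e_0A \rightarrow e_1A \rightarrow S_1 \rightarrow 0$ in $C=Dom_1$: here $\tau_C(S_1)=e_0A$ is projective and non-injective, $\Ext^{1}(S_1,e_0A)\neq 0$, and the minimal syzygy $\Omega^{1}(\Omega^{-1}(\tau(S_1)))=\Omega^{1}(S_1)=e_1J\cong e_0A$ \emph{is} projective; the same happens for every $d\geq 2$ with $M=e_1A/e_1J^{3}$, where $\tau_C(M)=e_0A$. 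So minimal syzygies can have projective summands, $\tau_C(M)$ can be projective, and your argument, if it worked, would prove it never is. Note where the real difficulty sits: if $\tau_C(M)$ is non-projective, Krull--Schmidt alone places it inside $\Omega^{m}(\Omega^{-m}(\tau(M)))$, since every summand of $P$ is projective --- no claim about $P$ is needed. The delicate case is exactly when $\tau_C(M)$ is projective (necessarily then of injective dimension $>m$, because by \ref{marvil} projectives of injective dimension $\leq m$ lie in $Dom_m^{\perp}$ and so are Ext-injective in $C$, whereas $\Ext^{1}(M,\tau_C(M))\neq 0$); there one must show that this projective module occurs as a summand of the minimal syzygy $\Omega^{m}(\Omega^{-m}(\tau(M)))$ itself, as it does in the example, rather than only of the correction term. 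That step is missing from your proposal --- your argument rules this case out for the wrong reason instead of handling it.
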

\begin{proof}
This follows from the previous two theorems by using that $F(\overline{\Delta})=Dom_m$ from \ref{dualityhigh} and $Dom_m = \Omega^{m}(A-mod)$, by \ref{marviltheo}. 
\end{proof}

We give one example, where one can explicitly calculate the relative Auslander-Reiten translate using the previous proposition.
\begin{example}\label{nakarq}
Let $A$ be the Nakayama algebra with Kupisch series $[2d,2d+1]$ for some $d \geq 1$. Note that $A$ is isomorphic to $End_B(B \oplus rad(B))$, when $B=K[x]/(x^{d+1})$. Since $rad(B)$ is 2-periodic, we get by \ref{chedlabtheo} that $A$ is a Auslander-Gorenstein algebra with Gorenstein dimension two and $A$ is also properly stratified with a duality by \ref{chedlabtheo}. $A$ has finite global dimension iff $d=1$.
$A$ has a unique indecomposable projective-injective module $e_1A \cong D(Ae_1)$.
By \ref{dualityhigh}, $F(\overline{\Delta})=Dom_1$ and $F(\Delta)=Proj_1$. Let $C=Dom_1$. Note that an indecomposable module of the form $e_iA/e_iJ^k$ has injective envelope $D(Ae_{i+k-1})$ and thus has dominant dimension at least one, iff $i+k-1 \equiv 1$ mod $2$ iff $i \equiv k$ mod $2$.
In case $e_iA/e_iJ^k$ has dominant dimension at least one, one has $\tau(e_iA/e_iJ^k)=e_{i+1}A/e_{i+1}J^k$ , $\Omega^{-1}(e_{i+1}A/e_{i+1}J^k)=D(J^ke_0)=e_1A/e_1J^{2d-k}$ and finally $\tau_C(e_iA/e_iJ^k)=\Omega^{1}(\Omega^{-1}(e_{i+1}A/e_{i+1}J^k))=e_{1-k}A/e_{1-k}J^{1+k}$.
The modules with projective dimension less than or equal to one are exactly the projective modules and the simple modules $S_1$. The corresponding relative Auslander-Reiten sequence for $S_1$ is \newline $0 \rightarrow e_0A/e_0J^2 \rightarrow e_1A/e_1J^3 \rightarrow e_1A/e_1J^1 \rightarrow 0$ in $F(\Delta)$ and $F(\overline{\Delta})$, which in case $d=1$ is the only relative Auslander-Reiten sequence and simplifies to $0 \rightarrow e_0A \rightarrow e_1A \rightarrow e_1A/e_1J^1 \rightarrow 0$ for $d=1$. We now assume that $d \geq 2$, so that $A$ has infinite global dimension and calculate the relevant relative almost split sequences for $F(\overline{\Delta})$ in that case.
The remaining relative Auslander-Reiten sequences are as follows for $k=1,2,...,d-1$: \newline
$0 \rightarrow e_1A/e_1J^{2k+1} \rightarrow e_1A/e_1J^{2k-1} \oplus e_0A/e_0J^{2k+2} \rightarrow e_0A/e_0J^{2k} \rightarrow 0$, \newline
$0 \rightarrow e_0A/e_0J^{2k+2} \rightarrow e_1A/e_1J^{2k+3} \oplus e_0A/e_0J^{2k} \rightarrow e_1A/e_1J^{2k+1} \rightarrow 0$.
\end{example}
\section{Classification of representation-finite algebras of class $\mathcal{A}$}
In this section, we always assume that our field $K$ is algebraically closed. Note that \ref{quasiheredduality} applies to higher Auslander algebras in class $\mathcal{A}$.
We will classify all representation finite algebras in the class $\mathcal{A}$ up to Morita equivalence and show that they are in fact higher Auslander algebras. Thus those examples all have the property that $F(\Delta)=Dom_m=Proj_m$ by \hyperref[dualityhigh]{ \ref*{dualityhigh}}, when the algebra has global dimension $2m$.
Since we are only interested in a classification up to Morita equivalence, we assume in this chapter that all algebras are given by quiver and relations.
We assume that all algebras have at least two simple modules to avoid trivialities (recall that the only quiver algebra with only one simple module and finite global dimension is the field).
The following theorem has been proven by Reiten and Donkin in \cite{DoR}: 
\begin{theorem}\label{donkinreiten}
Let $A$ be a representation finite quasi-hereditary quiver algebra having a duality with $m \geq 2$ simple modules.
Then $A$ is isomorph to one of the following algebras: 
\begin{enumerate}
\item The Auslander algebra of $K[x]/(x^3)$ with 3 simple modules.
\item The algebra $B(m,\lambda_1,\lambda_2,...,\lambda_{n-2})$, with $\lambda_{i-1} \in \{ 0,1 \}$ for $i=2,..,m-1$ given by quiver and relations as follows:
$$\xymatrix@1{\bullet^1 \ar@/^1pc/ [r]^{a_1} & \bullet^2 \ar@/^1pc/ [r]^{a_2} \ar@/^1pc/ [l]_{b_1}  & \bullet^3 \ar@/^1pc/ [r]^{a_3} \ar@/^1pc/ [l]^{b_2}& \bullet^4 \ar@/^1pc/ [l]^{b_3} & \cdots& \bullet^{m-1} \ar@/^1pc/ [r]^{a_{m-1}} & \bullet^{m} \ar@/^1pc/ [l]^{b_{m-1}}}$$
$b_{m-1} a_{m-1}, b_{i-1}a_{i-1}- \lambda_{i-1} a_i b_i, a_{i-1}a_i, b_i b_{i-1}$.
 
\end{enumerate}
\end{theorem}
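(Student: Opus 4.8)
\emph{Proof proposal.} The plan is to recover $A$ in two stages, first its Gabriel quiver $Q$ and then its relations, using the duality throughout to halve the bookkeeping. Composing the simple-preserving duality with $D$ gives a contravariant self-equivalence $\Phi$ of $\mod A$ fixing every simple module, so $\dim \Ext^{1}(S_i,S_j)=\dim \Ext^{1}(S_j,S_i)$ and $Q$ is symmetric: each arrow $i\to j$ is matched by an arrow $j\to i$, and the Cartan matrix is symmetric. Since $\Phi$ sends projectives to injectives and is compatible with the heredity order, it interchanges the standard and costandard modules, $\Phi(\Delta(i))\cong\nabla(i)$. Hence it suffices to determine the underlying graph of $Q$, the standard modules, and the multiplication of the paired arrows, after which the whole quasi-hereditary algebra with duality is pinned down.

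Second, I would show that representation-finiteness forces the underlying graph of $Q$ to be the path $A_m$ with each edge doubled, that is arrows $a_i\colon i\to i+1$ and $b_i\colon i+1\to i$ for $1\le i\le m-1$. The symmetry from the first step already reduces the question to the shape of the underlying graph; to exclude branch points, loops, triple edges and edges between non-adjacent vertices I would locate, in any other symmetric configuration, a convex subcategory whose algebra is representation-infinite, exhibiting either an explicit one-parameter family of indecomposables or a subquiver already on the known representation-infinite list (covering theory and Bongartz's criterion being available since $K$ is algebraically closed). Both the generic algebras and the exceptional one share this quiver, so the split between the two cases of the statement happens only at the level of relations.

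Third, with $Q$ fixed, I would read off the relations from the quasi-hereditary structure together with representation-finiteness. The heredity order makes each $\Delta(i)$ a uniserial quotient of $e_iA$, and the condition $A\in F(\Delta)$ with the correct filtration multiplicities (via BGG reciprocity, $[P(i):\Delta(j)]=[\nabla(j):S_i]$, which is symmetric under $\Phi$) forces the length of $\Delta(i)$ and hence which length-two paths must vanish. In the generic situation this yields $a_{i-1}a_i=0$, $b_ib_{i-1}=0$ and the boundary relation $b_{m-1}a_{m-1}=0$, leaving only the interior commutation relations $b_{i-1}a_{i-1}=\lambda_{i-1}a_ib_i$; a base change rescaling the arrows then normalises each scalar into $\{0,1\}$, giving the family $B(m,\lambda_1,\dots,\lambda_{m-2})$. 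The surviving exception is the configuration in which certain length-two paths are forced to be nonzero, making the standard modules longer; a finite check shows representation-finiteness permits this only when $m=3$, and that the unique resulting algebra is the Auslander algebra of $K[x]/(x^3)$.

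The main obstacle is the interaction of representation-finiteness with the two previous stages: representation-finiteness is a global condition, so ruling out every alternative quiver and every alternative relation configuration cannot be done by a single structural observation but requires a careful, if finite, analysis of candidate subquivers and of the admissible Loewy structures of the standard modules. Pinning the exceptional case to exactly $m=3$, and verifying that the Auslander algebra of $K[x]/(x^3)$ is genuinely not a member of the family $B(3,\lambda_1)$ (it is distinguished by the nonvanishing of the path $a_1a_2$), is the delicate endpoint of the argument.
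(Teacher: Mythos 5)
The first thing to say is that the paper contains no proof of this statement at all: it is imported as a known classification, introduced with the sentence ``The following theorem has been proven by Reiten and Donkin in \cite{DoR}'', and the entire proof lives in that reference. So there is no internal argument to compare yours against; the only question is whether your outline could stand on its own as a proof of the Donkin--Reiten classification. Its individual assertions are correct: a simple-preserving duality does force $\dim\Ext^{1}(S_i,S_j)=\dim\Ext^{1}(S_j,S_i)$, hence a symmetric quiver and symmetric Cartan matrix, and it does interchange $\Delta(i)$ and $\nabla(i)$; nonzero scalars in the commutation relations can indeed be normalised to $1$ by rescaling arrows inductively along the chain; and the Auslander algebra of $K[x]/(x^3)$ is indeed not a member of the family $B(3,\lambda_1)$, being distinguished by $a_1a_2\neq 0$.

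The genuine gap is that the two steps carrying all the content of the theorem are announced rather than executed. First, you never actually exclude any quiver other than the doubled $A_m$ path: ``I would locate, in any other symmetric configuration, a convex subcategory whose algebra is representation-infinite'' is the classification problem restated, not an argument. Moreover, the proposed factorisation (quiver first, relations second) is not coherent as stated, because representation-finiteness is a property of the bound quiver algebra $(Q,I)$, not of $Q$: a quiver with extra symmetric edges or a branch point could a priori support a representation-finite algebra for a suitable admissible ideal, so ruling out such quivers already requires surveying all possible relation patterns on them --- your stage two cannot be completed before your stage three. (For loops there is at least a clean shortcut you do not invoke: quasi-hereditary algebras have no loops, since $\Ext^{1}(S_i,S_i)$ embeds into $\Ext^{1}(\Delta(i),S_i)$, which vanishes.) Second, the ``finite check'' pinning the exceptional case to $m=3$ is in reality one check for every $m$: you must prove that for each $m\geq 4$ the relation patterns with nonvanishing length-two compositions (for instance the pattern of the Auslander algebra of $K[x]/(x^m)$, which is quasi-hereditary with a duality) are representation-infinite, and no argument for this is given. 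Until these two points are carried out --- which is essentially the substance of \cite{DoR} --- the proposal is a plausible roadmap, not a proof.
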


The next lemma describes in general how to obtain the quiver with relations of an algebra of the form $End_A(A\oplus \bigoplus\limits_{k=1}^{n}{S_{i_k}})$, where $S_i$ are some simple $A$-modules of a symmetric quiver algebra $A$.
A map of the form $l_{\alpha}$ for a linear combination of paths $\alpha$ always denotes left multiplication by that linear combination.
\begin{lemma}\label{quiverberechnen}
Let $A$ be a symmetric quiver algebra with primitive idempotents $e_i$ such that all indecomposable projective modules have Loewy length at least three (with the primitive idempotents arbitrary ordered) and $soc(e_iA)=e_i J^{n_i}=< \delta_i>$ ($i=1,2,...,n$) its simple modules, where $\delta_i$ denotes a vector space basis of $e_i J^{n_i}$. Then for $p \leq n$ the quiver with relations of the algebra $B:= End_A(A\oplus \bigoplus\limits_{i=1}^{p}{e_i J^{n_i}})$ can be obtained from the quiver with relations of $A$ in the following way:
For every simple module $e_i J^{n_i}$ appearing as a direct summand of $A \oplus \bigoplus\limits_{i=1}^{p}{e_i J^{n_i}}$ do the following: add a new vertex $p_{e_iJ^{n_i}}$ to the quiver of $A$ and two new arrows, called $\alpha_{e_i J^{n_i}}$ (pointing from $e_i$ in $A$ to the vertex $p_{e_i J^{n_i}}$) and $\beta_{e_i J^{n_i}}$ (pointing from the vertex $p_{e_i J^{n_i}}$ to the vertex $e_i$ in $A$) which we call $p_i$, $\alpha_i$ and $\beta_i$ for short assuming no such vertex or arrow already is named like that in $A$. The relations of $B$ are the old relations of $A$ plus the following new relations for every $i$: \newline
$0=  \gamma \alpha_i = \beta_i \gamma$ for any path $\gamma$ and $\alpha_i \beta_i =\delta_i$.
\end{lemma}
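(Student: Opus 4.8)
The plan is to compute the quiver with relations of $B=\End_A(M)$ for $M=A\oplus\bigoplus_{i=1}^{p}e_iJ^{n_i}$ directly from the spaces of homomorphisms between the indecomposable summands of $M$, and then to verify that the listed relations are complete by a dimension count. First I would list the summands. As $A$ is symmetric it is self-injective with trivial Nakayama permutation, so each $e_kA$ has simple socle $\soc(e_kA)=e_kJ^{n_k}\cong S_k=\top(e_kA)$; the hypothesis that every projective has Loewy length at least three ensures no $e_kA$ is simple. Hence the modules $e_1A,\dots,e_nA,e_1J^{n_1},\dots,e_pJ^{n_p}$ are pairwise non-isomorphic, $M$ is basic, and $B$ has $n+p$ vertices $e_1,\dots,e_n,p_1,\dots,p_p$. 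Since $\End_A(A)\cong A$ via the left multiplications $l_\alpha$, the full subquiver on $e_1,\dots,e_n$ with its relations is exactly the quiver with relations of $A$, which recovers the old part.

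Next I would compute the Hom-spaces that involve the new simple summands. A nonzero map $e_iJ^{n_i}\to e_kA$ is injective with image in $\soc(e_kA)\cong S_k$, so it exists iff $i=k$ and is then one-dimensional, spanned by the socle inclusion; under the convention that an arrow $u\to v$ corresponds to a morphism $M_v\to M_u$ this is the arrow $\alpha_i\colon e_i\to p_i$. Dually every map $e_kA\to e_iJ^{n_i}$ factors through $\top(e_kA)=S_k$, so it exists iff $i=k$, is one-dimensional and spanned by the top projection, giving $\beta_i\colon p_i\to e_i$. Finally $\Hom_A(e_iJ^{n_i},e_jJ^{n_j})$ is zero for $i\neq j$ and scalar for $i=j$, so there is no arrow among the $p_i$. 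A short check shows $\alpha_i$ and $\beta_i$ do not lie in the square of the radical of $\add M$, while any factorisation $e_kA\to e_iJ^{n_i}\to e_lA$ lands in $\soc(e_lA)\subseteq\rad^2(e_lA)$ and therefore neither adds nor removes arrows among the projectives; hence the arrows of $B$ are precisely the old ones together with the $\alpha_i,\beta_i$.

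I would then read the relations off the composites of these morphisms. The composite of the top projection $e_iA\to e_iJ^{n_i}$ with the socle inclusion $e_iJ^{n_i}\to e_iA$ is left multiplication by $\delta_i$, which yields $\alpha_i\beta_i=\delta_i$, where $\delta_i$ is read as the corresponding combination of cyclic paths at $e_i$ in the quiver of $A$. The relation $\gamma\alpha_i=0$ for every nontrivial path $\gamma$ holds because $\alpha_i$ has image $\soc(e_iA)=\langle\delta_i\rangle$ and, $A$ being symmetric, the left and right socles coincide (both are $J^{\perp}$ for the symmetrizing form), so $J\delta_i=0$ and left multiplication by any nontrivial path kills the image of $\alpha_i$. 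Dually $\beta_i\gamma=0$ holds since $\beta_i$ is the top projection while the image of any nontrivial path lies in $\rad(e_iA)$.

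The main point, and the step I expect to be the real obstacle, is the completeness of these relations. Here the plan is to compare dimensions. The Hom computation gives $\dim_K B=\dim_K A+3p$. In the presentation $KQ_B/\tilde I$, where $\tilde I$ collects the listed relations, the relations $\gamma\alpha_i=\beta_i\gamma=0$ force every nonzero path that meets $p_i$ to be one of $\alpha_i$, $\beta_i$ or $\alpha_i\beta_i=\delta_i$ (the last already lying in the $A$-part), so $KQ_B/\tilde I$ is spanned by the $A$-part together with the $3p$ elements $e_{p_i},\alpha_i,\beta_i$; one checks that the consequences these relations impose on old paths, such as $\delta_i\gamma=\gamma\delta_i=0$, already hold in $A$ by the socle property, so the $A$-part is not shrunk. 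Thus $\dim_K KQ_B/\tilde I\le\dim_K A+3p=\dim_K B$, and since the listed relations hold in $B$ there is a surjection $KQ_B/\tilde I\twoheadrightarrow B$; the dimension comparison forces it to be an isomorphism. The delicate bookkeeping is to confirm that no new identities among the old paths arise and that the spanning set is exactly of size $\dim_K A+3p$; the one genuinely algebraic input beyond this is the symmetry of $A$, used to obtain $J\delta_i=0$.
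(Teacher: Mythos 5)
Your proposal is correct and follows essentially the same route as the paper: both compute $\dim_K B=\dim_K A+3p$ from the Hom-spaces between the summands of $A\oplus\bigoplus_{i=1}^{p}e_iJ^{n_i}$, observe that the listed relations hold in $B$ so that the presented algebra $KQ/I$ surjects onto $B$, and conclude by a dimension comparison. The only difference is presentational: the paper realizes $B$ as an explicit $(p+1)\times(p+1)$ matrix algebra and simply asserts $\dim_K KQ/I=\dim_K A+3p$, whereas you spell out the spanning-set bookkeeping (every path through a new vertex reduces to $e_{p_i},\alpha_i,\beta_i$ or $\delta_i$, and the induced identities $\gamma\delta_i=\delta_i\gamma=0$ already hold in $A$ by the socle property and symmetry) and the radical-square check identifying the arrows.
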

\begin{proof}
First we calculate $B$ as a matrix algebra. We use that $Hom_A(e_i J^{n_i},e_j J^{n_j})=0$ for any two nonisomorphic simple $A$-modules. In $Hom_A(e_i J^{n_i},A)$ we choose the inclusion $inc_i$ as a canonical basis element and in $Hom_A(A,e_i J^{n_i}) \cong e_i J^{n_i},$ we choose $l_{\delta_i}$ as a canonical basis element. For $End_A(e_i J^{n_i})$, we choose the identity $id_i$ of $e_i J^{n_i}$ as a basis element.
Then $B$, as a matrix algebra of $(p+1) \times (p+1)$ matrices, is given by 
\[ \left( \begin{array}{ccccc}
A & <inc_1> & <inc_2> & ... & <inc_l> \\
<l_{\delta_1}> & <id_1> & 0 & ... & 0 \\
<l_{\delta_2}> & 0 & <id_2> & ... & 0 \\
... & ... & ... & ... & ...  \\
<l_{\delta_p}> & 0 & 0 & ... & <id_p>  \end{array} \right) . \]
Note that $B$ has dimension $dim(B)=dim(A)+3p$.
Define $p_{\alpha_i}$ as the matrix having $inc_i$ in the entry $(1,i+1)$ and everywhere else zeros.
Define $p_{\beta_i}$ as the matrix having $\delta_i$ in the entry $(i+1,1)$ and everywhere else zeros.
Define $p_{e_iJ^{n_i}}$ as the matrix having $<id_i>$ as the entry $(i+1,i+1)$ and everywhere else zeros.
For every arrow or vertex $x$ in $A$ define the matrix $p_x$ as follows:
$p_x:=$
\[ \left( \begin{array}{ccccc}
x & 0 & 0 & ... & 0 \\
0 & 0 & 0 & ... & 0 \\
0 & 0 & 0 & ... & 0 \\
0 & 0 & 0 & ... & 0  \end{array} \right). \]
Let $KQ$ be the quiver algebra constructed in the lemma and $\pi: KQ \rightarrow B$ the $K$-algebra homomorphism sending the vertices or arrows $y$ of $Q$ to $p_y$ in $B$. By construction, $\pi$ has the relations of $A$ and the new relations  $\gamma \alpha_i =0= \beta_i \gamma=$ for any path $\gamma$ and $\alpha_i \beta_i =\delta_i$ for any $i$ in the kernel $I$ and so it induces a surjective map $\hat{\pi}: KQ/I \rightarrow B$. Since $KQ/I$ has the same dimension $dim(A)+3p$ as $B$, $\hat{\pi}$ is an isomorphism. 
\end{proof}
The assumption in the previous lemma that all indecomposable projective modules have Loewy length at least three, implies that $\delta_i$ is contained in $J^2$ and therefore the above new relations will really be admissible.

\begin{proposition}
The algebra $B(n,\lambda_1,\lambda_2,...,\lambda_{n-2})$ has dominant dimension zero if one $\lambda_i$ is zero.
\end{proposition}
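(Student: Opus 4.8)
The plan is to exhibit a single indecomposable projective module whose injective envelope is not projective. Writing $B=B(n,\lambda_1,\dots,\lambda_{n-2})$ and $P_k:=e_kB$, the dominant dimension of $B$ is $0$ exactly when the injective envelope $I_0(B_B)=\bigoplus_k I_0(P_k)$ of the regular module fails to be projective, so producing one such $P_k$ suffices. I would fix an index $j$ with $\lambda_j=0$ and concentrate on $P_{j+1}$; the point of the hypothesis is that the relation $b_{j}a_{j}-\lambda_{j}a_{j+1}b_{j+1}$ attached to the vertex $j+1$ in Theorem~\ref{donkinreiten} degenerates to $b_{j}a_{j}=0$, which will enlarge the socle of $P_{j+1}$.

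First I would determine $\soc(P_{j+1})$ by listing the paths starting at $j+1$ and singling out those killed by the radical. The arrow $b_{j}\colon j+1\to j$ is such a path: extending it by either arrow leaving $j$ gives $b_{j}a_{j}=0$ (the degenerate relation) and $b_{j}b_{j-1}=0$ (the relation $b_ib_{i-1}$), so $b_{j}$ spans a copy of $S_{j}$ in the socle. Second, the loop $a_{j+1}b_{j+1}$ at the vertex $j+1$ is a nonzero element of $B$, and extending it by either arrow leaving $j+1$ gives $a_{j+1}b_{j+1}a_{j+1}=a_{j+1}(b_{j+1}a_{j+1})=0$ (here $b_{j+1}a_{j+1}$ is rewritten via the relation at the vertex $j+2$, or equals $0$ directly when $j+2=n$, and then $a_{j+1}a_{j+2}=0$ finishes the job) and $a_{j+1}b_{j+1}b_{j}=a_{j+1}(b_{j+1}b_{j})=0$; hence $a_{j+1}b_{j+1}$ spans a copy of $S_{j+1}$ in the socle. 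This yields $\soc(P_{j+1})\supseteq S_{j}\oplus S_{j+1}$.

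Then I would translate the non-simple socle into the desired statement. As $P_{j+1}$ is indecomposable while an indecomposable injective module has simple socle, $P_{j+1}$ cannot be injective. Now $B$ admits a simple-preserving duality $\delta$ (it occurs in the list of Theorem~\ref{donkinreiten}, which classifies such algebras having a duality); since a duality interchanges projectives and injectives and sends tops to socles while fixing simples, $\delta(P_{j+1})$ is the injective envelope $I(S_{j+1})$, and consequently $I(S_{j+1})$ is projective if and only if $P_{j+1}$ is injective. Therefore $I(S_{j+1})$ is not projective. Since $S_{j+1}$ is a summand of $\soc(P_{j+1})$, the module $I(S_{j+1})$ is a direct summand of $I_0(P_{j+1})$, hence of $I_0(B_B)$; thus $I_0(B_B)$ is not projective and $\operatorname{domdim}(B)=0$.

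The step I expect to be most delicate is the socle computation of the second paragraph, above all the assertion that the loop $a_{j+1}b_{j+1}$ does not vanish in $B$ and that no additional path sneaks into $\soc(P_{j+1})$ in a way that would spoil the argument. Both points come down to a careful reading of the four families of relations $b_{n-1}a_{n-1}$, $b_{i-1}a_{i-1}-\lambda_{i-1}a_ib_i$, $a_{i-1}a_i$, $b_ib_{i-1}$, and they require the boundary indices to be treated by hand: $j=1$, where the arrow $b_{j-1}$ is absent so that $b_{j}$ is annihilated by the radical using only $b_{j}a_{j}=0$, and $j=n-2$, where $j+2=n$ forces one to invoke $b_{n-1}a_{n-1}=0$ in place of a relation at the vertex $j+2$.
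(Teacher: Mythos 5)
Your argument is correct, and its core is the same as the paper's: fix $j$ with $\lambda_j=0$, observe that the relation attached to vertex $j+1$ degenerates to $b_ja_j=0$, and compute that $\soc(e_{j+1}B)$ contains $S_j\oplus S_{j+1}$ (the paper shows it equals exactly this), so that the injective envelope of $e_{j+1}B$ has $I(S_{j+1})$ as a direct summand. The two proofs part ways only in how they show this summand is not projective. The paper makes a second direct path computation on the other side: $\soc(Be_{j+1})=\langle a_j,\,a_{j+1}b_{j+1}\rangle$ is not simple, hence $\top(D(Be_{j+1}))\cong D(\soc(Be_{j+1}))$ is not simple, so the indecomposable module $D(Be_{j+1})\cong I(S_{j+1})$ cannot be projective. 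You instead invoke a simple-preserving duality $\delta$ of the module category, giving $I(S_{j+1})\cong\delta(e_{j+1}B)$, so that $I(S_{j+1})$ is projective if and only if $e_{j+1}B$ is injective, which fails because $e_{j+1}B$ has non-simple socle; this trades the second computation for a structural fact. That structural fact is the one weak point in your write-up: Theorem \ref{donkinreiten} as stated in the paper is only one direction of a classification (every representation-finite quasi-hereditary quiver algebra with a duality occurs in the list), so membership in the list does not formally guarantee a duality. The gap is easily closed without citing anything: the assignment $e_i\mapsto e_i$, $a_i\mapsto b_i$, $b_i\mapsto a_i$, extended so as to reverse the order of composition of paths, sends each defining relation of $B(n,\lambda_1,\dots,\lambda_{n-2})$ to a defining relation (the loop relations $b_{n-1}a_{n-1}$ and $b_{i-1}a_{i-1}-\lambda_{i-1}a_ib_i$ are fixed, while $a_{i-1}a_i$ and $b_ib_{i-1}$ are exchanged), hence induces an anti-automorphism of $B$ fixing the primitive idempotents and therefore a simple-preserving duality. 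With that patch (or by replacing the duality step with the paper's computation of $\soc(Be_{j+1})$), your proof is complete; your treatment of the boundary cases $j=1$ and $j+2=n$, and your check that $a_{j+1}b_{j+1}\neq 0$ (the ideal of relations is homogeneous of degree two, and its degree-two part, spanned by the listed relations, does not contain this loop when $\lambda_j=0$), are precisely the delicate points, and you handle them correctly.
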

\begin{proof}
Assume that $\lambda_k=0$ for some $k$. We show that $A:=B(n,\lambda_1,\lambda_2,...,\lambda_{n-2})$ has dominant dimension zero. We have $b_k a_k=0$ and $e_{k+1}A=<e_{k+1},a_{k+1},a_{k+1} b_{k+1}, b_k>$. Thus soc($e_{k+1}A$)=$<a_{k+1} b_{k+1},b_k>$ and so the injective hull of $e_{k+1}A$ is $D(Ae_k) \oplus D(A e_{k+1})$. We show that $D(Ae_{k+1})$ is not projective and thus the dominant dimension of $e_{k+1}A$ is zero. We show that the top of the module $D(Ae_{k+1})$ is not simple and thus $D(Ae_{k+1})$ can not be a projective module, since $D(Ae_{k+1})$ is indecomposable.
To calculate the top of the module $D(Ae_{k+1})$, we can equivalently calculate the socle of the module $Ae_{k+1}$, because one has in general $top(D(M)) \cong D(soc(M))$. Now soc($Ae_{k+1}$)=$<a_k,a_{k+1} b_{k+1} >$, which is not simple. Thus $A$ has dominant dimension zero as the dominant dimension of an algebra is the minimum of the dominant dimensions of the indecomposable projective modules.
\end{proof}

\begin{proposition}
The algebra $B(n,1,1,...,1)$ has dominant dimension $2n-2$ and is in class $\mathcal{A}$.
\end{proposition}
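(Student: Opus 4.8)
The plan is to verify for $B:=B(n,1,\dots,1)$ the three defining properties of class $\mathcal{A}$ — a simple-preserving duality, quasi-heredity, and dominant dimension $\geq 2$ — and to pin the dominant dimension to $2n-2$ by an explicit minimal injective coresolution. I write $S_i$ for the top of $e_iB$ and $I_i$ for its injective envelope. I would start with the duality: the assignment $e_i\mapsto e_i$, $a_i\mapsto b_i$, $b_i\mapsto a_i$ extends to an anti-automorphism $\sigma$ of $B$, since reversing multiplication sends the relation $b_{i-1}a_{i-1}-a_ib_i$ to itself, interchanges $a_{i-1}a_i$ with $b_ib_{i-1}$, and fixes $b_{n-1}a_{n-1}$; thus $\sigma$ preserves the defining ideal, and $D\circ\sigma$ is a duality on $\operatorname{mod}\text{-}B$ fixing every $S_i$.

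Next I would read off the indecomposable projectives from the relations, noting first that every nonzero loop squares to zero because $\ell_i^2=a_i(b_ia_i)b_i=a_ia_{i+1}b_{i+1}b_i=0$. One finds that $e_1B$ is uniserial $S_1/S_2/S_1$, each interior $e_iB$ (for $2\le i\le n-1$) is a \emph{diamond} with top and socle $S_i$ and middle layer $S_{i-1}\oplus S_{i+1}$, and $e_nB$ is uniserial $S_n/S_{n-1}$. Dualizing the left projectives $Be_i$ by $\sigma$ shows $e_iB\cong I_i$ for $1\le i\le n-1$, so these are projective--injective, whereas $e_nB$ has socle $S_{n-1}$ and is the unique projective that is not injective; the only non-projective indecomposable injective is $I_n\cong S_{n-1}/S_n$.

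For quasi-heredity I would take the order $1<2<\cdots<n$ and the standard modules $\Delta(1)=S_1$ and $\Delta(i)=S_i/S_{i-1}$ for $i\ge 2$. Splitting each projective along the two arrows leaving its top gives short exact sequences $0\to\Delta(i+1)\to e_iB\to\Delta(i)\to 0$ (with $e_nB=\Delta(n)$), so $B\in F(\Delta)$. Since each $\Delta(i)$ satisfies $[\Delta(i):S_i]=1$ and $\operatorname{End}(\Delta(i))=K$, here $\Delta=\overline{\Delta}$, so $B$ is standardly stratified; being classically quasi-hereditary it has finite global dimension (see \cite{DR}), whence $B$ is quasi-hereditary in the sense used here.

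Finally, because $\operatorname{domdim}(B)=\min_i\operatorname{domdim}(e_iB)$ and $e_1B,\dots,e_{n-1}B$ are projective--injective, the dominant dimension equals $\operatorname{domdim}(e_nB)$. Computing the minimal injective coresolution of $e_nB$ degree by degree, the successive cokernels are length-two modules whose socles run through $S_{n-1},S_{n-2},\dots,S_1,S_1,S_2,\dots,S_{n-1},S_n$, so the injective terms are $I_{n-1},I_{n-2},\dots,I_1,I_1,I_2,\dots,I_{n-1},I_n$; all but the last are among the projective--injectives $I_1,\dots,I_{n-1}$, and the coresolution terminates precisely when the cokernel becomes $I_n$ in homological degree $2n-2$. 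Hence $I_0,\dots,I_{2n-3}$ are projective and $I_{2n-2}=I_n$ is not, giving $\operatorname{domdim}(e_nB)=2n-2$, so $\operatorname{domdim}(B)=2n-2\ge 2$ and $B$ lies in class $\mathcal{A}$. The main obstacle I anticipate is exactly this last step for general $n$: one must show that the ``down--up'' walk of socles truly descends to $S_1$ and climbs back to $S_n$ using only the proj-injective envelopes $I_1,\dots,I_{n-1}$, i.e. that the non-projective injective $I_n$ never appears prematurely to truncate the dominant dimension. I would organize this as an induction on homological degree that identifies each cokernel as a module of the form $S_i/S_{i\mp1}$ and feeds in the explicit Loewy structure of the diamonds $I_i$ ($i<n$) established above.
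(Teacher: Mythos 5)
Your proof is correct, but it takes a genuinely different route from the paper's. You work entirely inside $B=B(n,1,\dots,1)$: the duality comes from the anti-automorphism swapping $a_i\leftrightarrow b_i$, quasi-heredity from the explicit filtrations $0\to\Delta(i+1)\to e_iB\to\Delta(i)\to 0$, and the dominant dimension from the minimal injective coresolution of the unique non-injective projective $e_nB$, whose ``down--up'' walk $I_{n-1},\dots,I_1,I_1,\dots,I_{n-1},I_n$ you identify correctly (one small slip: the middle cokernel is the simple module $S_1$, not a length-two module, though your socle list accounts for it). The paper instead realizes $B(n+1,1,\dots,1)\cong\End_A(A\oplus S_n)$ for an explicitly constructed symmetric algebra $A$ (via Lemma \ref{quiverberechnen}), imports quasi-heredity and the duality from the Donkin--Reiten classification \ref{donkinreiten}, and gets the dominant dimension from Mueller's theorem \ref{Mueller}: the minimal projective resolution of $S_n$ over $A$ walks down and back up the quiver, so $\Ext^i_A(S_n,S_n)=0$ for $1\le i\le 2n-2$ and $\Ext^{2n-1}_A(S_n,S_n)\neq 0$ by Lemma \ref{benson}; membership in class $\mathcal{A}$ then follows since $A$ is symmetric and $A\oplus S_n$ is a generator-cogenerator. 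Your approach buys self-containedness (no Mueller, no endomorphism-ring realization, no appeal to the classification theorem) and makes the quasi-hereditary structure explicit, which the paper never displays; the paper's approach buys brevity, since the Ext-computation over the smaller algebra $A$ is lighter than your $(2n-1)$-term coresolution bookkeeping, and it exhibits $B$ as gendo-symmetric, the structural feature linking these algebras to Schur algebra blocks. (In fact the two computations mirror each other: your coresolution walk is the image of the paper's periodic resolution of $S_n$ under the correspondence between $\operatorname{mod}$-$A$ and $\operatorname{mod}$-$B$.) The one step you leave as a sketch --- the induction identifying each cokernel as $S_i/S_{i\mp 1}$ --- is routine given the diamond Loewy structures you established, so the argument is complete in all essentials.
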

\begin{proof}
We assume that $B(n,1,1,...,1)$ has at least 3 simple modules, since for two simple modules the algebra is isomorphic to the Auslander algebra of $K[x]/(x^2)$ and the result is clear in this case.
Now define for $n \geq 2$ the following algebra $A:=kQ/I$ with $n$ simple modules:
The quiver is the same as in \ref{donkinreiten} and the relations are $b_{i-1} a_{i-1} = b_i a_i, a_{i-1} a_i=0, b_i b_{i-1}=0$ for $i=2,3,...,n$ (so we omit one relation). Then it is easily checked that the socle of the module $e_n A$ is equal to $<b_{n-1} a_{n-1}>$ and using the previous lemma \ref{quiverberechnen}, $End_A(A \oplus S_n)$ is isomorphic to $B(n+1,1,1,...,1)$. To show that $B(n+1,1,1,...,1)$ has dominant dimension $2n$, we use Mueller's theorem \ref{Mueller} and check that $Ext_A^{i}(S_n,S_n)=0$ for $i=1,...,2n-2$, but $Ext_A^{2n-1}(S_n,S_n)\neq 0$.
We construct a minimal projective resolution of $S_n$:
$$
\begin{xy}
	\xymatrix@C-1pc@R-1pc{
	& e_1A \ar[rd]^{}   &         & ..... \ar[rd]^{} &     & e_{n-1}A \ar[rd]^{}  &   & e_nA\ar[r] & S_n\ar[r]^{}  &  0 \\
  S_1 \ar[ru]^{} &     & b_{1}A \ar[ru]^{} &    & b_{n-2}A  \ar[ru]^{} &    & b_{n-1}A\ar[ru]^{} &  &  & }
\end{xy}
$$ 
$$
\begin{xy}
	\xymatrix@C-1pc@R-1pc{
	& e_nA \ar[rd]^{}   &         & ..... \ar[rd]^{} &     & e_{2}A \ar[rd]^{}  &   & e_1A\ar[r] & S_1\ar[r]^{}  &  0 \\
  S_n \ar[ru]^{} &     & a_{n-1}A \ar[ru]^{} &    & a_{2}A  \ar[ru]^{} &    & a_{1}A\ar[ru]^{} &  &  & }
\end{xy}
$$ 
Thus $Ext_A^{i}(S_n,S_n)=0$ for $i=1,...,2n-2$, but $Ext_A^{2n-1}(S_n,S_n)\neq 0$ follows from \ref{benson}.
That the algebra is in class $\mathcal{A}$ follows from the fact that $A$ is a symmetric algebra and $A \oplus S_n$ a generator-cogenerator.

\end{proof}
\begin{corollary}\label{classiA}
Let $A$ be a representation finite quiver algebra in the class $\mathcal{A}$ with $n \geq 2$ simple modules, then $A$ is isomorphic to the Auslander algebra of $K[x]/(x^3)$ or to $B(n,1,1,...,1)$.
\end{corollary}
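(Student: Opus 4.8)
The plan is to invoke the Donkin--Reiten classification \ref{donkinreiten} and then to eliminate the degenerate members of the family $B(n,\lambda_1,\dots,\lambda_{n-2})$ using the dominant-dimension constraint that is built into the very definition of class $\mathcal{A}$. So the corollary is essentially an assembly of the cited classification with the two dominant-dimension computations established in the two propositions immediately preceding it.

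First I would unwind the hypothesis: membership in class $\mathcal{A}$ means precisely that $A$ is quasi-hereditary, carries a simple-preserving duality, and has dominant dimension at least two. Since $A$ is moreover assumed representation-finite and presented as a quiver algebra with $n \geq 2$ simple modules, the hypotheses of Theorem \ref{donkinreiten} are satisfied. Hence $A$ is isomorphic either to the Auslander algebra of $K[x]/(x^3)$ (with three simple modules) or to $B(n,\lambda_1,\dots,\lambda_{n-2})$ for some choice of parameters $\lambda_i \in \{0,1\}$. In the first case there is nothing further to do, since that algebra already appears on the target list.

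Next I would treat the second case, where the only remaining task is to pin down the $\lambda_i$. Suppose, for contradiction, that some $\lambda_k=0$. Then the preceding proposition shows that $B(n,\lambda_1,\dots,\lambda_{n-2})$ has dominant dimension zero. But every algebra in class $\mathcal{A}$ has dominant dimension at least two, which is a contradiction. Therefore all $\lambda_i$ equal $1$, and $A \cong B(n,1,1,\dots,1)$, the second algebra on the list; the other preceding proposition confirms that this algebra does lie in class $\mathcal{A}$, so the list is not vacuous.

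I do not expect a genuine obstacle in this argument. The heavy lifting is carried entirely by the Donkin--Reiten theorem, which already supplies the complete list of representation-finite quasi-hereditary quiver algebras with a duality, and by the two dominant-dimension computations that precede the statement. The one substantive observation is simply that the dominant-dimension requirement in the definition of class $\mathcal{A}$ is exactly the condition that rules out every $B(n,\lambda)$ with a vanishing parameter, leaving only $B(n,1,\dots,1)$ and the exceptional Auslander algebra of $K[x]/(x^3)$.
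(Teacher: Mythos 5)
Your proposal is correct and follows exactly the paper's own argument: invoke the Donkin--Reiten classification (Theorem \ref{donkinreiten}) and then use the two preceding propositions to rule out every $B(n,\lambda_1,\dots,\lambda_{n-2})$ with some $\lambda_i=0$ (dominant dimension zero) while confirming $B(n,1,\dots,1)$ has dominant dimension $2n-2\geq 2$. No differences worth noting.
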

\begin{proof}
Such an $A$ is clearly in $\mathcal{A}$.
Theorem \ref{donkinreiten} tells us that any other representation finite quiver algebra, which is quasi-hereditary with a duality has the form $B(n,\lambda_1,\lambda_2,...,\lambda_{n-2})$ or is isomorphic to the Auslander algebra of $K[x]/(x^3)$. But by the previous lemma, only $B(n,1,1,...,1)$ and the algebra isomorphic to the Auslander algebra of $K[x]/(x^3)$ have dominant dimension larger or equal two and thus those algebras are the only algebras which can be in the class $\mathcal{A}$.
\end{proof}

We note that the algebras $B(n,1,1,...,1)$ have a special importance, since every representation finite block of a Schur algebra is Morita equivalent to a $B(n,1,1,...,1)$ (see for example \cite{DoR}). For this reason we will study $B(n,1,1,...,1)$ now in more detail.

\begin{theorem}\label{schuralgex}
The algebra $B=B(n,1,1,...,1)$ is a higher Auslander algebra with $gldim(A)=domdim(A)=2n-2$ and $F(\Delta)=Dom_{n-1}=Proj_{n-1}$. The characteristic tilting module equals $T=eA \oplus S_1$, in case $eA$ is the minimal faithful projective-injective $A$-module.
\end{theorem}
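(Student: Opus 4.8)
The plan is to establish the three claims of the theorem in sequence, building on the classification already obtained. First I would verify that $B=B(n,1,1,\ldots,1)$ is a higher Auslander algebra of the stated dimensions. Since $B$ lies in class $\mathcal{A}$ by the previous proposition, it is in particular quasi-hereditary with a simple-preserving duality, hence has finite global dimension and dominant dimension $\geq 2$, so it is a higher Auslander algebra by definition. The computation in the previous proposition already showed via Mueller's theorem \ref{Mueller} that the dominant dimension equals $2n-2$ (realising $B(n,1,\ldots,1)$ as $\mathrm{End}_A(A \oplus S_{n-1})$ for the appropriate symmetric algebra $A$ and checking the vanishing of $\mathrm{Ext}^i_A(S,S)$ up to $2n-3$). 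Because $B$ is a higher Auslander algebra, its global dimension coincides with its dominant dimension, giving $\mathrm{gldim}(B)=\mathrm{domdim}(B)=2n-2$.

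Next I would deduce $F(\Delta)=Dom_{n-1}=Proj_{n-1}$. Here the key observation is that $B$ is a quasi-hereditary higher Auslander algebra with a simple-preserving duality, so \ref{quasiheredduality} applies directly: the global dimension $2n-2$ forces $m=n-1$, and the corollary yields $F(\Delta)=Dom_m=Proj_m$, that is $F(\Delta)=Dom_{n-1}=Proj_{n-1}$. This is essentially immediate once the homological invariants are in hand, so the work is front-loaded into the dimension count.

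Finally, for the shape of the characteristic tilting module, I would invoke \ref{tiltchar} (equivalently the explicit form in \ref{quasiheredduality}): the basic tilting-cotilting module of projective dimension $m=n-1$ is $T \cong eA \oplus \Omega^{-(n-1)}((1-e)A)$, where $eA$ is the minimal faithful projective-injective module. It then remains to identify $\Omega^{-(n-1)}((1-e)A)$ explicitly with $S_1$ for this particular algebra. Since the duality is simple-preserving and $A$ is symmetric, I would trace through the two minimal resolutions displayed in the previous proposition: the projective resolution of $S_{n-1}$ (the simple whose endomorphism algebra gives the next $B$) runs through all the $e_iA$ and has length $2n-3$, and by the self-duality the cosyzygy computation is the mirror image, so the relevant cosyzygy $\Omega^{-(n-1)}$ of the non-projective-injective part collapses to the single simple module $S_1$.

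The main obstacle I expect is this last identification: confirming that $\Omega^{-(n-1)}((1-e)A) \cong S_1$ rather than merely knowing it is \emph{some} module of the right projective and injective dimension. This requires actually following the syzygy/cosyzygy chains through the quiver of $B(n,1,\ldots,1)$ and using the simple-preserving duality to pin down which simple appears, together with the fact that $eA$ is the unique indecomposable projective-injective. The earlier abstract results guarantee existence and uniqueness of $T$ within $Dom_{n-1}\cap Codom_{n-1}$, so the burden is purely the concrete computation of the cosyzygy, which the explicit resolutions in the preceding proposition make tractable.
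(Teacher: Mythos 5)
There is a genuine gap at the very first step, and it is a circularity. You claim that since $B$ lies in class $\mathcal{A}$ it is quasi-hereditary with a duality, ``hence has finite global dimension and dominant dimension $\geq 2$, so it is a higher Auslander algebra by definition.'' That is not the definition: a higher Auslander algebra requires the global dimension to be \emph{equal} to the dominant dimension, not merely that both are finite resp.\ $\geq 2$. Membership in class $\mathcal{A}$ does not give this equality (Schur algebras $S(n,r)$ with $n \geq r$ are in class $\mathcal{A}$ and in general have global dimension strictly larger than dominant dimension). You then use the unproved claim in the next sentence -- ``because $B$ is a higher Auslander algebra, its global dimension coincides with its dominant dimension'' -- which is exactly the statement to be established. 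So your argument assumes what it must prove.

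The paper closes this gap with a specific two-sided squeeze that your proposal never mentions: a quasi-hereditary algebra with $n$ simple modules has global dimension at most $2n-2$ (Dlab--Ringel, \cite{DR} section 2), while for a non-selfinjective algebra the global dimension is always at least the dominant dimension. Combining $\operatorname{gldim}(B) \leq 2n-2 = \operatorname{domdim}(B) \leq \operatorname{gldim}(B)$ forces equality; this is the key idea your write-up is missing, and without it the Mueller computation of $\operatorname{domdim}(B)=2n-2$ alone does not determine the global dimension. The remainder of your proposal is sound and matches the paper: $F(\Delta)=Dom_{n-1}=Proj_{n-1}$ via \ref{quasiheredduality} (equivalently \ref{dualityhigh}), and the characteristic tilting module via \ref{tiltchar} together with the concrete identification $\Omega^{-(n-1)}((1-e)B) \cong S_1$ (the paper phrases it as $\Omega^{n-1}((1-e)B)=S_1$, the syzygy form, which amounts to the same computation using the duality), which you correctly flag as the remaining explicit calculation.
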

\begin{proof}
We saw in the previous lemma that $B$ has dominant dimension $2n-2$. Now recall that a quasi-hereditary algebra with $n$ simple modules has global dimension at most $2n-2$, see for example \cite{DR} section 2. But the global dimension is always larger or equal to the dominant dimension for a non-selfinjective algebra. Thus $domdim(B)=2n-2$ forces $gldim(B)=domdim(B)$.
The equality $F(\Delta)=Dom_{n-1}=Proj_{n-1}$ follows from \hyperref[dualityhigh]{ \ref*{dualityhigh}}.
The statements about the characteristic tilting modules follows from $\Omega^{n-1}((1-e)B)=S_1$.
\end{proof}


\begin{thebibliography}{Gus}
\bibitem[ADL]{ADL}  Agoston, Istvan; Dlab, Vlastimil; Lukacs, Erzsebet: {\it Stratified algebras.}  C. R. Math. Acad. Sci. Soc. R. Can. 20 (1998), no. 1, 22-28.
\bibitem[APT]{APT} Auslander, Maurice; Platzeck, Maria Ines; Todorov, Gordana: {\it Homological theory of idempotent Ideals} Transactions of the American Mathematical Society, Volume 332, Number 2 , August 1992.
\bibitem[ARS]{ARS} Auslander, Maurice; Reiten, Idun; Smalo, Sverre: {\it Representation Theory of Artin Algebras} Cambridge Studies in Advanced Mathematics, 36. Cambridge University Press, Cambridge, 1997. xiv+425 pp.
\bibitem[AS]{AS} Auslander, Maurice; Solberg, {{\O}}yvind: {\it Gorenstein algebras and algebras with dominant dimension at least 2.} Comm. Algebra 21 (1993), no. 11, 3897-3934.
\bibitem[AS2]{AS2} Auslander, Maurice; Solberg, {{\O}}yvind: {\it Relative homology and representation theory. I. Relative homology and homologically finite subcategories.} Comm. Algebra 21 (1993), no. 9, 2995-3031.
\bibitem[Ben]{Ben} Benson, David J.: {\it Representations and cohomology I: Basic representation theory of finite groups and associative algebras.} Cambridge Studies in Advanced Mathematics, Volume 30, Cambridge University Press, 1991.
\bibitem[Che]{Che} Chen, Xiao-Wu: {\it Gorenstein Homological Algebra of Artin
Algebras.}\newline
\noindent \url{http://home.ustc.edu.cn/~xwchen/Personal\%20Papers/postdoc-Xiao-Wu\%20Chen\%202010.pdf}, retrieved on 26.05.2016.
\bibitem[CheDl]{CheDl} Chen, Xueqing; Dlab, Vlastimil: {\it Properly stratified endomorphism algebras.} J. Algebra 283 (2005), no. 1, 63-79. 
\bibitem[CheKoe]{CheKoe} Chen, Hongxing; Koenig, Steffen: {\it Ortho-symmetric modules, Gorenstein algebras and derived equivalences.} Int. Math. Res. Notices (2016), doi: 10.1093/imrn/rnv368
\bibitem[CheXi]{CheXi} Chen, Hongxing; Xi, Changchang: {\it Dominant dimensions, derived equivalences and tilting modules.} Israel Journal of Mathematics TBD (2016), 1-44
DOI: 10.1007/s11856-016-1327-4
\bibitem[DoR]{DoR} Donkin, Stephen; Reiten, Idun: {\it On Schur algebras and related algebras V: Some quasi-hereditary algebras of finite type} J. Pure Appl. Algebra 97 (1994), no. 2, 117-134. 
\bibitem[DR]{DR} Dlab, Vlastimil; Ringel, Claus Michael: {\it The Module Theoretical Approach to Quasi-hereditary Algebras} Representations of algebras and related topics (Kyoto, 1990), 200-224, London Math. Soc. Lecture Note Ser., 168, Cambridge Univ. Press, Cambridge, 1992.
\bibitem[FanKoe]{FanKoe} Fang, Ming; Koenig, Steffen: {\it Gendo-symmetric algebras, canonical comultiplication, bar cocomplex and dominant dimension.}  Trans. Amer. Math. Soc. 368 (2016), no. 7, 5037-5055.
\bibitem[FanKoe2]{FanKoe2} Fang, Ming, Koenig, Steffen: {\it Endomorphism algebras of generators over symmetric algebras.} J. Algebra 332 (2011), 428-433.
\bibitem[FanKoe3]{FanKoe3} Fang, Ming, Koenig, Steffen: {\it Schur functors and dominant dimension} Trans. Amer. Math. Soc. 363 (2011), no. 3, 1555-1576. 
\bibitem[FrMa]{FrMa} Frisk, Anders; Mazorchuk, Volodomyr: {\it Properly stratified algebras and tilting. }  Proc. London Math. Soc. (3) 92 (2006), no. 1, 29-61. 
\bibitem[HU]{HU} Happel, Dieter; Unger, Luise: {\it Modules of finite projective dimension and cocovers.} Math. Ann. 306 (1996), no. 3, 445-457. 
\bibitem[Iya]{Iya} Iyama, Osamu: {\it Higher-dimensional Auslander-Reiten theory on maximal orthogonal subcategories.} Advances in Mathematics, Volume 210, 22-50, 2007.
\bibitem[IyaSol]{IyaSol} Iyama, Osamu; Solberg, {{\O}}yvind: {\it Auslander-Gorenstein algebras and precluster tilting.} https://arxiv.org/pdf/1608.04179.pdf.
\bibitem[Kl]{Kl} Kleiner, Mark: {\it Approximations and almost split sequences in homologically finite subcategories.} J. Algebra 198 (1997), no. 1, 135-163.
\bibitem[Mar]{Mar} Marczinzik, Ren\'e: {\it Upper bounds for the dominant dimension of Nakayama and related algebras.} http://arxiv.org/abs/1605.09634.
\bibitem[MarVil]{MarVil} Martinez Villa, Roberto: {\it Modules of dominant and codominant dimension.} Communications in algebra, 20(12), 3515-3540, 1992.
\bibitem[MazOv]{MazOv} Mazorchuk, Volodymyr; Ovsienko, Serge: {\it Finitistic dimension of properly stratified algebras.}  Adv. Math. 186 (2004), no. 1, 251-265. 
\bibitem[Mue]{Mue} Mueller, Bruno: {\it The classification of algebras by dominant dimension.} Canadian Journal of Mathematics, Volume 20, 398-409, 1968.
\bibitem[Rei]{Rei} Reiten, Idun: {\it  Tilting theory and homologically finite subcategories with applications to quasihereditary algebras.} Handbook of tilting theory, 179-214, London Math. Soc. Lecture Note Ser., 332, Cambridge Univ. Press, Cambridge, 2007.
\bibitem[SkoYam]{SkoYam} Skowronski, Andrzej; Yamagata, Kunio: {\it Frobenius Algebras I: Basic Representation Theory.} EMS Textbooks in Mathematics, 2011.
\bibitem[Yam]{Yam} Yamagata, Kunio: {\it Frobenius Algebras} in
\noindent {Hazewinkel, M. (editor): Handbook of Algebra, North-Holland, Amsterdam, Volume I, 841-887, 1996.}
\end{thebibliography}
\end{document}